\numberwithin{equation}{section}
\theoremstyle{remark}
\newtheorem*{remk}{Remark}
\theoremstyle{plain} 
\newtheorem{thrm}{Theorem}[section]
\newtheorem{lem} [thrm]{Lemma}
\newtheorem{prop}[thrm]{Proposition}
\newtheorem{cor} [thrm]{Corollary}
\theoremstyle{definition}
\newtheorem{defn}{Definition}[section]
\NewDocumentCommand{\norm}{m m} {\IfNoValueTF{#1} {\left\Vert{#2}\right\Vert} {\left\Vert{#2}\right\Vert_{#1}}}
\begin{document}

\begin{abstract}
We consider the defocusing energy-critical inhomogeneous nonlinear Schr\"{o}dinger equation (INLS) $iu_t + \Delta u = |x|^{-b}|u|^{k}u$ in $\mathbb{R} \times \mathbb{R}^{n}$ where $n \geq 3$, $0<b<\min(2, n/2)$, and $k=(4-2b)/(n-2)$.
We show that for every spherically symmetric initial data $\phi$ in $H^1(\mathbb{R}^n)$, or preferably in $\dot{H}^1(\mathbb{R}^n)$, the associated solution is globally well-posed and scatters for every such $n$ and $b$ except for $n=4$ with $1\leq b<2$ and $n=5$ with $1/2\leq b\leq 5/4$.
We mainly apply the arguments of Tao (2005), but inspired by the work of Aloui and Tayachi (2021), we utilize Lorentz spaces to define spacetime norms.
This method is distinct from the concentration compactness principle and establishes a quantitative bound for the solution's spacetime norm.
The bound has an exponential form $C\exp(CE[\phi]^C)$ in terms of the energy $E[\phi]$, similar to Tao's work.
\end{abstract}

\begin{keyword}
	Inhomogeneous nonlinear Schr\"{o}dinger equation \sep Sobolev-Lorentz spaces \sep Morawetz inequality \sep scattering theory \sep energy bounds
	\MSC[2020] 35Q55 \sep 35P25 \sep 35B40
\end{keyword}

\begin{frontmatter}
\title{Global well-posedness and scattering of the defocusing energy-critical inhomogeneous nonlinear Schr\"{o}dinger equation with radial data}

\author[1]{Dongjin Park}
\ead{aseastar@kaist.ac.kr}

\affiliation[1]{
	organization={Department of Mathematical Sciences, Korea Advanced Institute of Science and Technology},
	city={Daejeon},
	postcode={34141},
	country={Republic of Korea},
}
\end{frontmatter}


\section{Introduction} \label{Section:1}

Let $n\geq 3$ be the dimension, $0<b<\min(2, n/2)$, $\mu = \pm 1$, and $k>0$. We are interested in the solution $u:I\times \mathbb{R}^n \to \mathbb{C}$ of the inhomogeneous nonlinear Schr\"{o}dinger equations (shortened as INLS) defined as below. When $k > 1$, we would want to assume a weaker initial condition $\phi \in \dot{H}^1(\mathbb{R}^n)$, which is explained later in Theorems \ref{thrm:main} and \ref{cor:scattering}, and Subsection \ref{Subsection:2.2}.
\begin{equation} \label{inls}
\left\{
\begin{aligned}
&iu_t + \Delta u = \mu |x|^{-b}|u|^{k}u \\
&u(0) = \phi \in H^1(\mathbb{R}^n)
\end{aligned}
\right.
\end{equation}

The equation \eqref{inls} is a model for mainly two physical topics: the study of diluted Bose-Einstein condensation with two- and three-body interactions considered and the study of laser guiding in an axially non-uniform plasma channel. (See \citep{BelmonteBeitia200798}, \citep{YonggeunCho19}, \citep{Gill2000}, and \citep{TangShukla200776} for detail.)
INLS thus has many general forms, but we limit our attention to a relatively simple case of having one nonlinear power with a smooth (possibly except at origin) decaying multiplier, especially $|x|^{-b}$.

We start with introducing common terminologies in the studies of nonlinear Schr\"{o}dinger equations and their relatives.
The time interval $I$ in which the solution $u$ is defined is called its \emph{lifespan}.
For the sign parameter $\mu$, the equation \eqref{inls} is said to be \emph{defocusing} if $\mu = +1$ and \emph{focusing} if $\mu = -1$.
For the nonlinearity power parameter $k$, we consider together a value called the \emph{critical regularity} defined by $\gamma_c = n/2-(2-b)/k$. We note that both the equation \eqref{inls} and the solution's $\dot{H}_x^{\gamma_c}(\mathbb{R}^n)$ norm are preserved under the \emph{scaling} transformation
\[
u(t,x) ~\mapsto~ u_\lambda(t,x) := \lambda^{\frac{n-2\gamma_c}{2}}u(\lambda^2 t, \lambda x) \quad(\lambda>0).
\]

We also introduce two main quantities conserved throughout the lifespan of the solution $u \in C_t(I,H_x^1(\mathbb{R}^n))$. One is the \emph{mass}
\[
M[u(t)]
:= \int_{\mathbb{R}^n} \big|u(t,x)\big|^2 dx
\]
and the other is the \emph{energy}
\[
E[u(t)]
:=	\int_{\mathbb{R}^n} \frac{1}{2} \big|\nabla u(t,x)\big|^2 + \frac{\mu}{k+2}|x|^{-b}|u(t,x)|^{k+2} ~dx.
\]
Based on the fact that $M[u(t)]^{1-\gamma_c} E[u(t)]^{\gamma_c}$ is preserved under the scaling, we say that \eqref{inls} is \emph{mass-critical} if $\gamma_c = 0$, \emph{energy-critical} if $\gamma_c = 1$, and \emph{intercritical} if $0 < \gamma_c < 1$.

We lastly note one \emph{coercivity} property for the defocusing case.
It is the uniform boundedness of the solution's $\dot{H}_x^1(\mathbb{R}^n)$ norm in time, both above and below, in terms of the energy $E[\phi]$.
More precisely, every solution $u \in C_t(I,\dot{H}_x^1(\mathbb{R}^n))$ of \eqref{inls} with initial data $\phi$ satisfies the bounds
\[
c\min(E[\phi], E[\phi]^{c'}) \leq \norm{\dot{H}_x^1(\mathbb{R}^n)}{u(t)} \leq C\max(E[\phi], E[\phi]^{C'})
\]
for every $t\in I$, where $0<c<C$ and $0<c'<1<C'$ are absolute constants depending only on $n$ and $b$.
(As a side note, the focusing case also has a similar formulation for coercivity, but this is valid only when $E[\phi]$ is below a threshold.)

In this paper, our main interest lies in the case $\mu = +1$ and $k=(4-2b)/(n-2)$, and hence we have \emph{defocusing energy-critical INLS} as featured in the title. From now on, we limit our view to the defocusing energy-critical case unless stated otherwise.

When $b = 0$, \eqref{inls} turns into the usual nonlinear Schr\"{o}dinger equations (shortened as NLS), whose global existence and scattering results have been studied extensively for decades.
We mainly look at the defocusing energy-critical NLS as it is one of the two main inspirations for our result.
For the radial data, Bourgain \citep{Bourgain99} resolved $n=3$ and explicitly computed a bound of the solution's spacetime norms; it shows Knuth tower-type growth in terms of the solution's energy. Tao \citep{Tao05} extended the result to every $n \geq 3$, improving the bound to an exponential type also.
For the general data, Colliander, Keel, Staffilani, Takaoka, and Tao \citep{CollianderKSTT08} resolved $n=3$ and Ryckman and Visan \citep{RyckmanVisan07} resolved $n=4$, both of which gave Knuth tower type bounds. Though lacking quantitative bounds, Visan \citep{Visan07} resolved $n\geq 5$ using the frequency-localized interaction Morawetz inequality.

As a side note, we also look at some notable results for other parameter setups of NLS.
For the focusing energy-critical case, Kenig and Merle \citep{KenigMerle} proved the scattering result for $n=3,4,5$ with radial data, renowned for the nowadays widespread technique: the concentration--compactness principle.
For the focusing intercritical case, Dinh \citep{Dinh20u} provided a unified approach for the scattering result for every $n\geq 1$ with general data, which extends the earlier results of Dodson and Murphy (\citep{DodsonMurphyRad} and \citep{DodsonMurphy18}).
For the defocusing energy-supercritical case, Bulut \citep{Bulut23} resolved $(n, b, k)=(3, 0, 6)$ with radial data by inspiration from Tao \citep{Tao05}, while in contrast, Merle, Rapha\"{e}l, Rodnianski, and Szeftel \citep{MerleRRS22} studied the blow-up phenomena for some higher-dimensional triples $(n, b, k)=(5,0,9), (6,0,5), (8,0,3), (9,0,3)$.

We go back to $b>0$.
Recently, several studies for the scattering of INLS have emerged with varying parameters.
For the defocusing intercritical case, Dinh \citep{Dinh2019} resolved most of the intercritical cases with $n\geq 3$ (all intercritical cases with $n\geq 4$ in particular) by the arguments of Visciglia \citep{Visciglia09}.
For the focusing intercritical case, Cardoso, Farah, Guzman, and Murphy \citep{CardosoFGM20} resolved $n \geq 2$ with $0<b<\min(2, n/2)$ by the concentration compactness method, which extends the earlier results of Farah and Guzm\'{a}n (\citep{FarahGuzman17} and \citep{FarahGuzman20}), and Miao, Murphy, and Zhang \citep{MiaoMurphyZhang19}.
Dinh and Keraani \citep{DinhKeraani21} then gave a different scattering condition in terms of the ``potential energy'' and added the studies of other long-time dynamics such as blow-ups.
Campos and Cardoso \citep{CamposCardoso} provided a much different proof for $n\geq 3$ using the virial-Morawetz approach of Dodson and Murphy \citep{DodsonMurphyRad}.
For the focusing energy-critical case, Cho, Hong, and Lee (\citep{ChoHongLee20} and \citep{ChoLee21}) resolved $n=3$ with $0<b<3/2$ for radial data by the concentration compactness method, featuring weighted Sobolev spaces.
Guzman and Murphy \citep{GuzmanMurphy21} then extended the result to general data for $(n,b,k) = (3,1,2)$.

Meanwhile, Aloui and Tayachi (\citep{AlouiTayachiGWP} and \citep{AlouiTayachi}) have found some applications of Lorentz spaces for well-posedness results of INLS; local, global, and also the small-data scattering.
Both works follow the structure of Cazenave \citep{Cazenave03} but adopt Lorentz spaces in the $x$ variable to define Strichartz norms.
Many good properties of Lebesgue spaces (H\"{o}lder's inequality, Sobolev embedding, and Strichartz estimates) are also applicable to Lorentz spaces.

Regarding these backgrounds, we present the following two main theorems. In Theorem \ref{thrm:main}, we estimate a solution's spacetime norms for the defocusing energy-critical ILNS \eqref{inls} under the radial data assumption. The proof is based on the arguments of Tao \citep{Tao05} but adopts Lorentz spaces in the definition of solution spaces by inspiration from Aloui and Tayachi \citep{AlouiTayachi}.
We note that the spacetime norm estimate in Theorem \ref{thrm:main} never depends on the choice of $T_\pm$, and thus we can let $T_{\pm} \to \pm\infty$ for the global existence of the solution.
As a result, in conjunction with a local theory similar to \citep{AlouiTayachi},
we conclude in Theorem \ref{cor:scattering} that such a solution always scatters.

\begin{thrm} \label{thrm:main}
Let $n\geq 3$ be an integer and $b>0$ obey the following range restriction.
\[
\left\{
\begin{aligned}
&0<b<3/2 && \text{if } n=3, \\
&0<b<1 && \text{if } n=4, \\
&0<b<1/2 \text{\: or \:} 5/4<b<2 && \text{if } n=5, \\
&0<b<2 && \text{if } n\geq 6.
\end{aligned}
\right.
\]
Let $[T_-, T_+]$ be a compact interval, and let $u$ be the unique spherically symmetric solution of \eqref{inls} associated with the initial data $\phi \in H^1(\mathbb{R}^n)$ and lying in the solution space
\[
C_t([T_-, T_+], H_x^1(\mathbb{R}^n))
\cap L_t^{p_*}([T_-, T_+], W_x^{1;q_*,2}(\mathbb{R}^n))
\]
where $p_* = 2(n+2)/(n-2+2b)$ and $q_* = 2n(n+2)/(n^2+4-4b)$. Then we have
\[
\norm{L_t^{\frac{2(n+2)}{n-2}}([T_-, T_+], L_x^{\frac{2(n+2)}{n-2}}(\mathbb{R}^n))}{u}
\leq	C\exp(CE^C)
\]
where $E = E[\phi]$ is the solution's energy and $C$ are absolute constants depending only on $n$ and $b$.\\
In particular, if $n=3,4,5$ and $0<b<(6-n)/2$, the same estimate holds when we replace the inhomogeneous Sobolev-Lorentz spaces with homogeneous ones:
we can assume $\phi$ to only lie in $\dot{H}^1(\mathbb{R}^n)$,
in which case
the solution only lies in $C_t([T_-, T_+], \dot{H}_x^1(\mathbb{R}^n)) \cap L_t^{p_*}([T_-, T_+], \dot{W}_x^{1;q_*,2}(\mathbb{R}^n))$.
\end{thrm}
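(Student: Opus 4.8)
The plan is to transplant Tao's argument for the defocusing energy-critical NLS \citep{Tao05} to the inhomogeneous equation \eqref{inls}, carrying out each step in Sobolev--Lorentz spaces in the spatial variable as in Aloui and Tayachi \citep{AlouiTayachi}, so that the singular factor $|x|^{-b}\in L_x^{n/b,\infty}(\mathbb{R}^n)$ is always disposed of by H\"older's inequality in Lorentz spaces at no cost in integrability. Since $u$ is already assumed to exist uniquely in the stated space, the task is solely to produce the a priori spacetime bound, uniformly in $[T_-,T_+]$. First I would assemble the Lorentz analogues of Tao's toolkit: Strichartz and retarded (Christ--Kiselev) estimates for $e^{it\Delta}$ on the admissible pair $(p_*,q_*)$ and on the energy and dual pairs, with second Lorentz index $2$ or $\infty$ as appropriate; the fractional Leibniz rule and Sobolev embedding in the Sobolev--Lorentz scale; Littlewood--Paley theory and Bernstein's inequality for pieces measured in $L_x^{q,2}(\mathbb{R}^n)$; and the radial Sobolev (Strauss) inequality, which for spherically symmetric $v$ gives $|v(x)|\lesssim_n|x|^{-(n-2)/2}\norm{\dot H_x^1(\mathbb{R}^n)}{v}$, hence for the solution $|u(t,x)|\lesssim_{n,b}|x|^{-(n-2)/2}\langle E\rangle^{O(1)}$ via the coercivity bound from the introduction. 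Combining these with an Aloui--Tayachi-type small-data and stability theory yields the threshold statement: there is $\eta_0=\eta_0(n,b)>0$ such that on any subinterval $J$ with $\norm{L_{t,x}^{2(n+2)/(n-2)}(J\times\mathbb{R}^n)}{u}\le\eta_0$ all the relevant Strichartz--Lorentz norms of $u$ over $J$ are bounded by $C\langle E\rangle^{O(1)}$, and $u$ is stable under small perturbations of its data and forcing.

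Next I would establish, for spherically symmetric solutions, the one-particle Morawetz inequality with multiplier $a(x)=|x|$ (or a smoothly truncated version). Differentiating the Morawetz functional $M_a(t)=2\,\mathrm{Im}\int_{\mathbb{R}^n}\bar u\,\nabla a\cdot\nabla u\,dx$ along \eqref{inls}, the Hessian contribution vanishes for radial $u$, the bi-Laplacian contribution is nonnegative, and the nonlinear contribution equals, after an integration by parts, $\tfrac{2}{k+2}\bigl(b+(k+1)(n-1)\bigr)\int_{\mathbb{R}^n}\frac{|x|^{-b}|u|^{k+2}}{|x|}\,dx$, whose coefficient is positive for $\mu=+1$ and every admissible $b$; pairing this with $|M_a(t)|\le 2\norm{L_x^2}{u(t)}\norm{\dot H_x^1}{u(t)}$ and invoking mass conservation and coercivity gives
\[
\int_{T_-}^{T_+}\!\!\int_{\mathbb{R}^n}\frac{|u(t,x)|^{k+2}}{|x|^{\,b+1}}\,dx\,dt\;\lesssim_{n,b}\;\norm{L^2}{\phi}\,C\langle E\rangle^{O(1)}.
\]
For the homogeneous statement, where no $L^2$ norm is available, I would truncate $a$ at a free radius $R$, bound the error terms by the energy using the pointwise radial estimate above, and optimize in $R$ so that the right-hand side becomes $C\langle E\rangle^{O(1)}$ with only $\dot H^1$; this truncation, together with the nonlinear estimates, closes precisely when $|u|^ku$ is of class $C^1$, i.e.\ $k>1$, i.e.\ $b<(6-n)/2$, which restricts that refinement to $n\le 5$.

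The core is the iteration. Partition $[T_-,T_+]$ into consecutive intervals $I_1,\dots,I_J$ on each of which $\norm{L_{t,x}^{2(n+2)/(n-2)}(I_j\times\mathbb{R}^n)}{u}=\eta_0$; since this spacetime norm is additive in $\tfrac{2(n+2)}{n-2}$-th powers over the $I_j$, it suffices to show $J\le C\exp(CE^C)$. On each $I_j$, the failure of the small-data alternative forces, through a Duhamel expansion, the dispersive decay of $e^{it\Delta}$, and Littlewood--Paley analysis in Lorentz spaces, a bubble of concentration: there exist $N_j>0$ and $t_j\in I_j$ with $\int_{|x|\le C/N_j}|u(t_j,x)|^{2n/(n-2)}\,dx\gtrsim_{n,b}1$; and --- here spherical symmetry is indispensable --- the pointwise radial bound forbids such concentration away from the origin, confining the bubble to a ball about $x=0$ of radius $\sim N_j^{-1}$. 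Evolving this bubble over a subinterval of $I_j$ of length $\sim N_j^{-2}$ and inserting it into the Morawetz integrand $|x|^{-b-1}|u|^{k+2}$ near the origin produces a quantitative lower bound for the Morawetz contribution of suitable packets of the $I_j$; Tao's pigeonholing over dyadic scales $N_j$, together with the finiteness of the Morawetz quantity, then converts this into the asserted bound $J\le C\exp(CE^C)$, which is manifestly independent of $T_\pm$.

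The step I expect to fight hardest is coordinating the three weights in play --- the Morawetz weight $|x|^{-1}$, the nonlinearity weight $|x|^{-b}$, and the weight $|x|^{-(n-2)/2}$ emitted by the radial Sobolev inequality --- within the scaling and integrability constraints imposed by $|x|^{-b}$, and it is exactly this balance that dictates the excluded cases $n=4$ with $1\le b<2$ and $n=5$ with $\tfrac12\le b\le\tfrac54$: one needs the nonlinear Strichartz--Lorentz and perturbation estimates for $\langle\nabla\rangle(|x|^{-b}|u|^ku)$ --- whose most dangerous piece, $|x|^{-b-1}|u|^{k+1}$, carries the derivative on the singular weight --- to hold simultaneously with the (possibly truncated) Morawetz estimate and the radial concentration analysis, and in the middle band of $b$ for $n=4,5$, where $k$ sits near $1$, these requirements are incompatible, whereas for $n\ge 6$ and for $n=5$ with $b$ either small ($k>1$) or large ($k$ small) they are not. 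A secondary, purely technical, task is re-establishing each of Tao's harmonic-analysis lemmas in Sobolev--Lorentz spaces; this is routine via the real-interpolation description of Lorentz spaces, provided one always places $|x|^{-b}$ in an $L_x^{n/b,\infty}$ slot, where the Lorentz H\"older inequality applies with exact addition of exponents.
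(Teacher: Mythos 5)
Your proposal captures the correct high-level architecture: partition $[T_-,T_+]$ into subintervals of fixed small $L^{2(n+2)/(n-2)}_{t,x}$ norm, use Lorentz H\"older in the $L_x^{n/b,\infty}$ slot to absorb $|x|^{-b}$, construct a Morawetz estimate with an $|x|^{-b-1}|u|^{k+2}$ weight, extract a spatial bubble of concentration on each subinterval, pin it to the origin by radiality, and then count subintervals via the Morawetz bound. But the central technical engine that determines exactly which $(n,b)$ are covered --- and the whole reason the excluded range is $1\le b<2$ for $n=4$ and $\tfrac12\le b\le\tfrac54$ for $n=5$ --- is missing from your proposal.

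The decisive lemma is the bootstrapping statement (Lemma \ref{lem:3.2} in the paper) that the linear flows $e^{i(t-t_m)\Delta}u(t_m)$ from the endpoints of a subinterval $[t_1,t_2]$ inherit a polynomial-in-$\eta$ lower bound in $L^{2(n+2)/(n-2)}_{t,x}$. The direct Duhamel bootstrap you sketch closes only when the nonlinear gain $\eta^{(2-b)/(n+2)}$ beats the linear scale $\eta^{(n-2)/(2(n+2))}$, i.e.\ when $n<6-2b$ (equivalently $k>1$). For $n+2b\ge 6$ one cannot bootstrap in the undifferentiated equation at all; the paper instead differentiates \eqref{inls} once in $x$, writes $\nabla u = \nabla u_1 + A\nabla u$ with $A$ a contraction on $\dot S^0$, truncates the Neumann series at order $M\sim 1/(2-b)$, and iterates an interpolation lemma (Lemma \ref{lem:4.1}) of the form $\norm{X}{Aw}\le CE^C\norm{\dot S^0}{w}^{1-\theta}\norm{X}{w}^\theta$. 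That interpolation lemma is where the constraints $n>4$ and $n(n-6)+4b>0$ arise (from the dispersive exponents on $e^{it\Delta}\phi$ needing to lie in $[2n/(n-4),\infty)$), and those two inequalities are precisely the boundary of the missing cases. Your proposal attributes the exclusion to a tension between the Morawetz weight, the nonlinearity weight, and the Strauss weight --- but neither the Morawetz inequality (Lemma \ref{lem:2.7-inls-morawetz}) nor the localization of the bubble to the origin is where the argument breaks; both work on the full range $0<b<\min(2,n/2)$.

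Two smaller points. First, the paper localizes the bubble to the origin by Tao's rotation/energy argument (finitely many rotated copies of a ball far from the origin would make $\norm{L_x^{2n/(n-2)}}{u}$ too large), not by the Strauss $|x|^{-(n-2)/2}$ pointwise bound; your Strauss route is a plausible alternative but gives a weaker localization scale and would need to be checked against the $\eta$-dependent constants that feed into the final exponential count. Second, the extraction of the concentration bubble itself hinges on an $h$-difference H\"older-type estimate for the distant-past Duhamel term $v$ (Lemma \ref{lem:3.8-v-holder-est}), proved by real interpolation in Lorentz spaces; this estimate already fails for $n=4$, $1<b<2$ because it collides with the degenerate endpoint $L^{\infty,2}(\mathbb{R}^n)=\{0\}$. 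Neither of these $h$-difference/averaging devices, nor the chain Lemma \ref{lem:3.8-v-holder-est} $\Rightarrow$ Proposition \ref{prop:3.3-locmass} $\Rightarrow$ Corollary \ref{cor:3.5} $\Rightarrow$ Proposition \ref{cor:3.7-dec-seq} $\Rightarrow$ final counting via near-disjointness of the balls $B_k$ at the common concentration time $t_*$, appears in your proposal; as written, the "bubble of concentration plus Tao's pigeonholing" paragraph is a placeholder for the hardest half of the paper.
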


\begin{thrm} \label{cor:scattering}
Let $n$, $b$, $p_*$, $q_*$ and $u$ be defined as in Theorem \ref{thrm:main}. Then the solution $u$ is global and scatters in $\dot{H}_x^1(\mathbb{R}^n)$, in the sense that $u$ in fact belongs to
\[
C_t(\mathbb{R}, \dot{H}_x^1(\mathbb{R}^n))
\cap L_t^{p_*}(\mathbb{R}, \dot{W}_x^{1;q_*,2}(\mathbb{R}^n))
\]
and there exist profiles $U_+,U_-\in \dot{H}^1(\mathbb{R}^n)$ such that
\[
\lim_{t\to+\infty} \norm{\dot{H}_x^{1}}{u(t) - e^{it\Delta}U_+}
=\lim_{t\to-\infty} \norm{\dot{H}_x^{1}}{u(t) - e^{it\Delta}U_-}
= 0.
\]
\end{thrm}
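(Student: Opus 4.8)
The plan is to deduce the scattering statement from the quantitative spacetime bound of Theorem \ref{thrm:main} together with a standard local well-posedness theory in the Lorentz-based solution spaces, along the lines of \citep{AlouiTayachi}. First I would set up the local theory: a contraction-mapping argument on the Duhamel operator $u \mapsto e^{it\Delta}\phi - i\mu\int_0^t e^{i(t-s)\Delta}(|x|^{-b}|u|^k u)(s)\,ds$ in the space $L_t^{p_*}\dot W_x^{1;q_*,2}$ intersected with $C_t\dot H_x^1$, using that $(p_*,q_*)$ is an admissible Strichartz pair and that Strichartz estimates, the fractional product rule, the chain rule, and Hölder's inequality all extend to Lorentz spaces in $x$. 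This gives local existence, uniqueness, and — crucially — a small-data scattering result and a blow-up/continuation criterion: if the $L_t^{p_*}\dot W_x^{1;q_*,2}$ norm of $u$ on a maximal interval $I_{\max}$ is finite, then $I_{\max}=\mathbb{R}$ and $u$ scatters.

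The second step is to upgrade the control. Theorem \ref{thrm:main} bounds $\norm{L_{t,x}^{2(n+2)/(n-2)}}{u}$ on every compact $[T_-,T_+]\subset I_{\max}$ by $C\exp(CE^C)$, uniformly in $T_\pm$. I would then run a standard ``good local theory'' bootstrap: partition $I_{\max}$ into finitely many subintervals on each of which the $L_{t,x}^{2(n+2)/(n-2)}$ norm is smaller than the local-theory threshold, and on each piece use the Strichartz/Duhamel estimates to convert the $L_{t,x}^{2(n+2)/(n-2)}$ control into control of the full norm $\norm{L_t^{p_*}\dot W_x^{1;q_*,2}}{u}$, adding up over the (energy-dependent but finite) number of pieces. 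This yields a global-in-time bound on $\norm{L_t^{p_*}(\mathbb{R})\dot W_x^{1;q_*,2}}{u}$, which by the continuation criterion forces $I_{\max}=\mathbb{R}$, giving global well-posedness with $u\in C_t(\mathbb{R},\dot H_x^1)\cap L_t^{p_*}(\mathbb{R},\dot W_x^{1;q_*,2})$. (For the $H^1$-data cases outside the range $0<b<(6-n)/2$ one additionally propagates the $L^2$ mass by the analogous estimate at regularity zero; the $\dot H^1$ statement then follows a fortiori.)

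Finally, for the scattering profiles, I would define $U_+ := \phi - i\mu\int_0^{\infty} e^{-is\Delta}(|x|^{-b}|u|^k u)(s)\,ds$ and show the integral converges in $\dot H^1$: by the Strichartz inequality applied to the tail $\int_T^{\infty}$, the $\dot H^1$ norm of the tail is controlled by the nonlinearity in the dual Strichartz norm over $[T,\infty)$, which tends to $0$ as $T\to\infty$ because $\norm{L_t^{p_*}([T,\infty))\dot W_x^{1;q_*,2}}{u}\to 0$ by dominated convergence from the now-established global finiteness. The same estimate shows $\norm{\dot H_x^1}{u(t)-e^{it\Delta}U_+}\to 0$ as $t\to+\infty$, and symmetrically for $U_-$ as $t\to-\infty$. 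The main obstacle is the second step: carefully verifying that all the nonlinear estimates (product rule, chain rule for the non-smooth power $|u|^ku$ when $k<1$, and the handling of the singular weight $|x|^{-b}$ near the origin and at infinity) go through in the Lorentz setting and interact correctly with radial Sobolev embeddings — this is exactly where the dimensional restrictions on $b$ in Theorem \ref{thrm:main} originate, and one must make sure the local theory is consistent with the same range.
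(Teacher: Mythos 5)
Your proposal is correct and follows essentially the same route the paper takes: the paper itself dispatches Theorem~\ref{cor:scattering} by observing that the bound in Theorem~\ref{thrm:main} is uniform in $T_\pm$, so one sends $T_\pm\to\pm\infty$ and then invokes the Lorentz-space local theory of \citep{AlouiTayachi} (set up in Subsection~\ref{Subsection:2.2}) to upgrade the global $L_{t,x}^{2(n+2)/(n-2)}$ bound to a global Strichartz bound, global existence, and scattering via the Duhamel tail. Your sketch merely spells out the standard continuation/small-interval bootstrap and the construction of $U_\pm$ that the paper leaves implicit.
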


\begin{remk}
In Theorem \ref{thrm:main}, the range conditions $n=3,4,5$ and $0<b<(6-n)/2$ correspond to the nonlinearity power $k>1$, high enough for the discussion of well-posedness in homogeneous Sobolev-Lorentz spaces.
Theorem \ref{cor:scattering} can be understood as a minimal result embracing both cases $k>1$ and $k\leq 1$. In fact, if the inhomogeneous Sobolev spaces are used in Theorem \ref{thrm:main}, then the scattering result in $H^1(\mathbb{R}^n)$ holds in Theorem \ref{cor:scattering}.
\end{remk}

Ideally, we expect Theorem \ref{thrm:main} to hold for every $n\geq 3$ and $0<b<\min(2,n/2)$ as this is the whole range where the local theory of the energy-critical case is discussed. However, the scattering problem remains open for two missing ranges $1\leq b < 2$ for $n=4$ and $1/2\leq b \leq 5/4$ for $n=5$. Both cases have the low dimension $n$ and the low nonlinearity power $k$ at the same time, making it challenging to apply the variant bootstrapping arguments by Tao \cite[Appendix]{Tao05}; we will see the detailed issues in Section \ref{Section:4}.

On the bright side, we still cover most of our target parameter range, especially all of $0<b<\min(2,n/2)$ when $n=3$ or $n\geq 6$.
Another interesting aspect is that we do not utilize the concentration compactness method of \citep{KenigMerle}, despite it being widespread for the scattering results of several dispersive equations nowadays.
Lastly, we can quantitatively bound the solution's size (in spacetime norms) using only the solution's energy and a few absolute parameters.


Throughout this paper, $C>0$ and $c>0$ stand for several large and small constants that depend only on the parameters $n$ and $b$ (unless stated otherwise). When several absolute constants exist in a single context, we may add prime symbols or subscripts for disambiguation.

To illustrate the arguments of Tao \cite[Section 3]{Tao05}, we start with a small parameter $\eta$, which preferably depends polynomially on the solution's energy $E$, and then divide the large time interval $[T_-, T_+]$ into many smaller subintervals $I_1, I_2, \cdots, I_J$ such that $L_t^{\frac{2(n+2)}{n-2}}(I_j, L_x^{\frac{2(n+2)}{n-2}})$ norm of the solution $u$ is comparable to $\eta^c$ for every $j = 1,\cdots,J$.
Later, it happens that one of the two scenarios happens for every $j$:
\begin{itemize}
\item	At least one of the two long-time approximants $u_\pm = e^{i(t-T\pm)\Delta}u(T_\pm)$ has significantly large $L_t^{\frac{2(n+2)}{n-2}}(I_j, L_x^{\frac{2(n+2)}{n-2}})$ norm, especially greater than $\eta^{C_1}$.
\item	There exists a point $x_j \in \mathbb{R}^n$ such that the solution forms a bubble of concentration on the spacetime slab $I_j \times B(x_j, C\eta^{-CC_0}|I_j|^{1/2})$. In particular, $L_x^2$ norm of $u(t)$ is bounded below by $c\eta^{CC_0}|I_j|^{1/2}$ uniformly in $t\in I_j$.
\end{itemize}

There are only a finite number of subintervals $I_j$ falling in the first scenario, called \emph{bad} (or \emph{exceptional}) subintervals in \citep{Tao05} and this paper. In particular, that number is polynomially bounded in $1/\eta$ and thus eventually negligible.
For all the other $I_j$, called \emph{good} (or \emph{unexceptional}) intervals, we can take $x_j = 0$ thanks to the radial data assumption. The Morawetz inequality of \citep{Bourgain99} has a variant for the defocusing energy-critical INLS, so we can extract a finite sequence of good subintervals rapidly concentrating to a single time. However, since the approximate local mass conservation law provides a lower bound for the local mass in each bubble of concentration, if $J$ is too large (beyond an exponential bound in $E$), then the summation of the local masses leads to a contradiction against the energy conservation law after H\"{o}lder's inequality.

As the distinction from \citep{Tao05}, we select the solution spaces differently from the usual $L^{p}$-based ones. Unlike NLS, INLS has a $L^{p,\infty}$ type multiplier $|x|^{-b}$. Since we work on the endpoint regularity of $H^1(\mathbb{R}^n)$ data assumption, it is necessary to use Lorentz spaces as in \citep{AlouiTayachi} or weighted Sobolev spaces as in \citep{ChoHongLee20}.

Lorentz spaces have two significant occurrences in this paper: a priori estimates and ``$h$-difference'' estimates.
A priori estimates are relatively straightforward due to the good properties of Lebesgue spaces shared with Lorentz spaces, and we only have to choose appropriate exponents.
The $h$-difference estimates refer to estimating the forms $f(x+h)-f(x)$, where $h \in \mathbb{R}^n$ is small and $f$ is any sufficiently smooth function in $x$ relevant to INLS, such as the multiplier $|x|^{-b}$ or the solution $u(t,x)$ at a fixed time $t$.
The real interpolation of Lorentz spaces and the Fundamental Theorem of Calculus are helpful for these estimates.

Most of the difficulties in this paper lie in a technical bootstrapping lemma, Lemma \ref{lem:3.2}.
The usual approach as in the short proof of \cite[Lemma 3.2]{Tao05} works only for high nonlinearity powers ($k>1$, i.e. $n<6-2b$), and this is where \cite[Appendix]{Tao05} comes in.
After differentiating \eqref{inls} in $x$, we find a way to bootstrap $\nabla u$ by Neumann series approximation.
To make use of the arguments' iterative nature, one special lemma (Lemma \ref{lem:4.1}) is necessary, which the technical conditions $n>4$ and $n(n-6)+4b>0$ for the remaining case $n \geq 6-2b$ arise from and explains the strange-looking loss of ranges in $b$ for $n=4,5$.

\section{Local Well-posedness and Other Basic Estimates} \label{Section:2}

\subsection{Definitions and Properties of Sobolev-Lorentz Spaces} \label{Subsection:2.1}

We start with some basic definitions for Lorentz spaces, Sobolev-Lorentz spaces, the Lorentz-type Strichartz norms, and the scattering behavior of a solution. For the differential operators like $|\nabla|^\gamma$ to make sense, we also assume that all the functions in this paper are tempered distributions in the $x$ variable.

\begin{defn}
Let $1<p<\infty, ~1\leq q\leq\infty$. A function $f:\mathbb{R}^n \to \mathbb{C}$ is said to be in the \emph{Lorentz space} $L^{p,q}(\mathbb{R}^n)$ if its \emph{Lorentz quasinorm}
\[
\norm{L^{p,q}(\mathbb{R}^n)}{f}
:=	\Big(\int_0^\infty \big(t^{-1/p}f^*(t)\big)^{q} \frac{dt}{t} \Big)^{1/q}
\]
is finite, where $f^*$ is the decreasing rearrangement of $f$.
\end{defn}

\begin{defn}
Let $\gamma\in\mathbb{R}, ~1<p<\infty, ~1\leq q\leq\infty$. A function $f:\mathbb{R}^n \to \mathbb{C}$ is said to be in the \emph{homogeneous Sobolev-Lorentz space} $\dot{W}^{\gamma;p,q}(\mathbb{R}^n)$ if
\[
\norm{\dot{W}^{\gamma;p,q}(\mathbb{R}^n)}{f}
:=	\big\Vert |\nabla|^\gamma f \big\Vert_{L^{p,q}(\mathbb{R}^n)} < \infty,
\]
or in the \emph{inhomogeneous Sobolev-Lorentz space} $W^{\gamma;p,q}(\mathbb{R}^n)$ if
\[
\norm{W^{\gamma;p,q}(\mathbb{R}^n)}{f}
:=	\big\Vert \langle\nabla\rangle^\gamma f \big\Vert_{L^{p,q}(\mathbb{R}^n)} < \infty
\]
where $\langle x \rangle := (1+|x|^2)^{1/2}$.
\end{defn}

\begin{defn}
Let $\gamma\in\mathbb{R}$, $2\leq p\leq\infty$, and $2\leq q<\infty$. A pair $(p,q)$ is said to be $\dot{H}^\gamma$-\emph{admissible} if $\dfrac{2}{p} + \dfrac{n}{q} = \dfrac{n}{2} - \gamma$, and the \emph{Strichartz space} $\dot{S}^{\gamma}(I)$ refers to the set of all functions $u:I\times\mathbb{R}^n\to\mathbb{C}$ whose \emph{Strichartz norms}, defined as below, are finite. (Usual modification if $p = \infty$.)
\begin{align*}
\left\Vert u \right\Vert_{\dot{S}^{\gamma}(I)}
&=	\sup_{\text{$L^2$-admissible }(p,q)} \left\Vert u \right\Vert_{L_t^p(I,\dot{W}_x^{\gamma;q,2}(\mathbb{R}^n))} \\
&=	\sup_{\text{$L^2$-admissible }(p,q)} \Big(\int_I \big\Vert |\nabla|^\gamma u(t) \big\Vert_{L_x^{q,2}(\mathbb{R}^n)}^p \,dt \Big)^{1/p}
\end{align*}
\end{defn}

\begin{remk}
Lorentz spaces $L^{p,q}(\mathbb{R}^n)$ are reflexive for every $1<p<\infty$ and $1< q < \infty$, and thus so are Sobolev-Lorentz spaces $\dot{W}^{\gamma;p,q}(\mathbb{R}^n)$.\footnote{See \cite[Chapter 2]{Lemarie02} or \cite[Theorem 1.4.16]{Grafakos}. Alternatively, we can combine the uniform convexity for the case $p\geq q$ (\cite[Section 3]{Halperin54}) with Milman-Pettis Theorem (\cite[Theorem 3.31]{Brezis}). The reflexivity fails at the endpoint cases $q=1, \infty$ since $(L^{p,\infty}(\mathbb{R}^n))^* \supsetneq L^{p',1}(\mathbb{R}^n)$; see \cite{Cwikel75}.}
When $p=q$, $\dot{W}^{\gamma;p,p}(\mathbb{R}^n)$ is equal to the usual Sobolev space $\dot{W}^{\gamma,p}(\mathbb{R}^n)$.
In the notation of norms, we often omit $\mathbb{R}^n$ when writing the solution space $L_t^p(I,\dot{W}_x^{\gamma;q,2}(\mathbb{R}^n))$, if the space variable $x$ runs through the entire $\mathbb{R}^n$ with Lebesgue measure and the context of the dimension $n$ is discernible.
We lastly note the following equivalent characterizations of Sobolev-Lorentz spaces for positive integers $\gamma$,
\begin{gather*}
c \leq \frac{ \norm{L^{p,q}(\mathbb{R}^n)}{\nabla^\gamma f} }{ \norm{\dot{W}^{\gamma;p,q}(\mathbb{R}^n)}{f} } \leq C
\text{\quad for every $0 \ne f \in \dot{W}^{\gamma;p,q}(\mathbb{R}^n)$}, \\
c \leq \frac{ \norm{L^{p,q}(\mathbb{R}^n)}{f} + \norm{L^{p,q}(\mathbb{R}^n)}{\nabla^\gamma f} }{ \norm{W^{\gamma;p,q}(\mathbb{R}^n)}{f} } \leq C
\text{\quad for every $0 \ne f \in W^{\gamma;p,q}(\mathbb{R}^n)$}
\end{gather*}
which can be derived from the real interpolation on H\"{o}rmander-Mikhlin multiplier theorem.
\end{remk}

We now look at several norm inequalities widely known for Lebesgue spaces. Apart from possibly worse proportionality constants, the inequalities work for Lorentz spaces too.
Aloui and Tayachi used these properties to establish the local well-posedness in \citep{AlouiTayachi} and the global well-posedness plus the small-data scattering in \citep{AlouiTayachiGWP}.

\begin{prop}[H\"{o}lder's inequality; {\cite[Proposition 2.3]{Lemarie02}, \cite[Theorem 3.4]{ONeil63}}]
Let $(X,\mu)$ be a $\sigma$-finite measure space, $1<p_1,p_2<\infty$, and $1\leq s_1,s_2\leq \infty$ such that
\[
1<p=\dfrac{1}{1/p_1+1/p_2}<\infty, \quad 1\leq s=\dfrac{1}{1/s_1+1/s_2} \leq\infty.
\]
Then for every $f \in L^{p_1,s_1}(X)$ and $g \in L^{p_2,s_2}(X)$, we have
\[
\norm{L^{p,s}(X)}{fg}
\leq C \norm{L^{p_1,s_1}(X)}{f} \norm{L^{p_2,s_2}(X)}{g}
\]
where $C$ is an absolute constant depending only on $p_1$, $p_2$, $s_1$, and $s_2$.
\end{prop}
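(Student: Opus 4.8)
The plan is to reduce the inequality to the classical scalar H\"{o}lder inequality on the half-line $(0,\infty)$ equipped with the measure $dt/t$, which is the route taken in \cite[Theorem 3.4]{ONeil63}. Write $h^{*}$ for the decreasing rearrangement of a measurable function $h$ on $X$ and $d_{h}(\lambda):=\mu(\{x:|h(x)|>\lambda\})$ for its distribution function. The starting point is the reformulation of the Lorentz quasinorm as a weighted Lebesgue norm of the rearrangement,
\[
\norm{L^{p,s}(X)}{h}=\norm{L^{s}((0,\infty),\,dt/t)}{t^{1/p}h^{*}(t)},
\]
read as $\sup_{t>0}t^{1/p}h^{*}(t)$ when $s=\infty$; this is immediate from the definition of the quasinorm. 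Hence the entire assertion is about controlling $(fg)^{*}$ in terms of $f^{*}$ and $g^{*}$.

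First I would prove the pointwise rearrangement bound
\[
(fg)^{*}(2t)\ \le\ f^{*}(t)\,g^{*}(t)\qquad(t>0).
\]
This follows from the elementary inclusion $\{|fg|>\alpha\beta\}\subseteq\{|f|>\alpha\}\cup\{|g|>\beta\}$, which gives $d_{fg}(\alpha\beta)\le d_{f}(\alpha)+d_{g}(\beta)$ for all $\alpha,\beta>0$; taking $\alpha=f^{*}(t)$ and $\beta=g^{*}(t)$ and using the right-continuity of the non-increasing functions $d_{f},d_{g}$ (so that $d_{f}(f^{*}(t))\le t$ and $d_{g}(g^{*}(t))\le t$) yields $d_{fg}(f^{*}(t)g^{*}(t))\le 2t$, which is the claim. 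The $\sigma$-finiteness of $(X,\mu)$ is what guarantees that the decreasing rearrangements are well defined and the distribution functions right-continuous; also, $f\in L^{p_{1},s_{1}}(X)$ together with $p_{1}<\infty$ forces $f^{*}(t)<\infty$ for every $t>0$ (and likewise for $g$), so no term above is vacuously infinite.

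Next, multiply the last display by $t^{1/p}$ and split the weight using $1/p=1/p_{1}+1/p_{2}$, obtaining $t^{1/p}(fg)^{*}(2t)\le\big(t^{1/p_{1}}f^{*}(t)\big)\big(t^{1/p_{2}}g^{*}(t)\big)$. Taking the $L^{s}((0,\infty),dt/t)$ quasinorm of both sides and applying the scalar H\"{o}lder inequality on $((0,\infty),dt/t)$ with the exponent relation $1/s=1/s_{1}+1/s_{2}$ — the cases where one or both of $s_{1},s_{2}$ equal $\infty$ being covered by the trivial bound $\sup(uv)\le(\sup u)(\sup v)$ — gives
\begin{align*}
\norm{L^{s}(dt/t)}{t^{1/p}(fg)^{*}(2t)}
&\le\ \norm{L^{s_{1}}(dt/t)}{t^{1/p_{1}}f^{*}(t)}\,\norm{L^{s_{2}}(dt/t)}{t^{1/p_{2}}g^{*}(t)}\\
&=\ \norm{L^{p_{1},s_{1}}(X)}{f}\,\norm{L^{p_{2},s_{2}}(X)}{g}.
\end{align*}
Finally, the change of variable $t\mapsto t/2$ shows $\norm{L^{s}(dt/t)}{t^{1/p}(fg)^{*}(2t)}=2^{-1/p}\norm{L^{p,s}(X)}{fg}$, so combining the two estimates gives the proposition with the explicit constant $C=2^{1/p}<2$, which depends only on $p$ and hence only on $p_{1}$ and $p_{2}$.

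There is no genuinely hard step in this argument: it is a short reduction to the classical H\"{o}lder inequality, and the constant is completely explicit. The only places that need a little care are the product rearrangement bound — where one must invoke right-continuity of the distribution functions rather than any (generally false) continuity of $f^{*}$ itself — and the bookkeeping of the degenerate second-index cases $s_{i}=\infty$. One could instead deduce the proposition by real interpolation, writing $L^{p,s}(X)=(L^{r_{0}}(X),L^{r_{1}}(X))_{\theta,s}$ for suitable $r_{0}\ne r_{1}$ and $\theta\in(0,1)$ and interpolating the bilinear map $(f,g)\mapsto fg$ between its two endpoint H\"{o}lder bounds; but correctly matching the second indices in a bilinear real-interpolation theorem is fussier than the rearrangement computation above, so the direct route is the one I would take.
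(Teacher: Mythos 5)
Your proof is correct, and it follows exactly the route of O'Neil's Theorem 3.4 that the paper cites in lieu of giving its own argument: the rearrangement inequality $(fg)^{*}(2t)\le f^{*}(t)g^{*}(t)$ (the diagonal case of O'Neil's $(fg)^{*}(t_1+t_2)\le f^{*}(t_1)g^{*}(t_2)$), followed by scalar H\"{o}lder on $((0,\infty),dt/t)$ and a dilation $t\mapsto t/2$ that costs $2^{1/p}$. Your explicit care about right-continuity of the distribution function, about $s_i=\infty$, and about finiteness of $f^{*}(t)$ is exactly the right bookkeeping; one incidental observation is that your argument quietly uses the standard convention $t^{1/p}f^{*}(t)$, which is what the proposition's cited sources use — the $t^{-1/p}$ appearing in the paper's Definition 2.1 is evidently a sign typo, since that weight would make the quasinorm diverge for any nonzero $f$.
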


\begin{prop}[Sobolev embeddings; {\cite[Theorem 2.4]{Lemarie02}}]
Let $0<\alpha<n$, $1<p<\tilde{p}<\infty$, and $1\leq s \leq\infty$ such that $-1/p + 1/\tilde{p} + \alpha/n = 0$. Then for every $f \in \dot{W}^{\alpha;p,s}(\mathbb{R}^n)$, we have
\[
\norm{L^{\tilde{p},s}(\mathbb{R}^n)}{f}
\leq C \norm{\dot{W}^{\alpha;p,s}(\mathbb{R}^n)}{f}
\]
where $C$ is an absolute constant depending only on $p$, $s$, $n$, and $\alpha$.
\end{prop}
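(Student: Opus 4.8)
The plan is to deduce this from the classical Hardy--Littlewood--Sobolev inequality on Lebesgue spaces together with the standard convolution and interpolation theory of Lorentz spaces. Writing $g := |\nabla|^{\alpha} f$, so that $g \in L^{p,s}(\mathbb{R}^n)$ and, at least formally, $f = |\nabla|^{-\alpha} g$, the key observation is that the Fourier multiplier $|\xi|^{-\alpha}$ is, up to a dimensional constant, convolution with the Riesz kernel $K_\alpha(x) = c_{n,\alpha}|x|^{\alpha - n}$, and the hypothesis $-1/p + 1/\tilde{p} + \alpha/n = 0$ is exactly the scaling bookkeeping that makes $f = c_{n,\alpha} K_\alpha * g$ dimensionally consistent between $L^{p,s}$ and $L^{\tilde{p},s}$. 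So the proposition reduces to the boundedness of the Riesz potential $|\nabla|^{-\alpha} \colon L^{p,s}(\mathbb{R}^n) \to L^{\tilde{p},s}(\mathbb{R}^n)$.

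For that boundedness I would give either of two short arguments. The first is direct: a one-line decreasing-rearrangement computation gives $K_\alpha^*(t) \sim t^{-(n-\alpha)/n}$, so that $K_\alpha \in L^{r,\infty}(\mathbb{R}^n)$ with $r = n/(n-\alpha)$, and O'Neil's convolution inequality in Lorentz spaces then yields $\| K_\alpha * g \|_{L^{\tilde{p},s}} \le C \| K_\alpha \|_{L^{r,\infty}} \| g \|_{L^{p,s}}$, since $1/\tilde{p} = 1/p + 1/r - 1$, which is precisely the hypothesis (using $1/r - 1 = -\alpha/n$), while the target second Lorentz index equals $s$ because $1/s + 1/\infty = 1/s \le 1$. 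The second argument bypasses the kernel: classical Hardy--Littlewood--Sobolev gives $|\nabla|^{-\alpha} \colon L^{p_0}(\mathbb{R}^n) \to L^{\tilde{p}_0}(\mathbb{R}^n)$ for any two Lebesgue exponents $1 < p_0 < p < p_1 < \infty$ bracketing $p$ (with $1/\tilde{p}_i = 1/p_i - \alpha/n$, which can be kept in $(0,1)$ by shrinking the bracket), and since the exponent relation is affine in $1/p$, applying the real interpolation functor $(\cdot,\cdot)_{\theta,s}$ for the $\theta \in (0,1)$ with $1/p = (1-\theta)/p_0 + \theta/p_1$ and using the identities $(L^{p_0}, L^{p_1})_{\theta,s} = L^{p,s}$ and $(L^{\tilde{p}_0}, L^{\tilde{p}_1})_{\theta,s} = L^{\tilde{p},s}$ produces the Lorentz bound. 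Either way, $\| f \|_{L^{\tilde{p},s}} \le C \| g \|_{L^{p,s}} = C \| f \|_{\dot{W}^{\alpha;p,s}}$.

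The routine ingredients (the rearrangement of $K_\alpha$, the choice of bracketing Lebesgue exponents, and the identification of Lorentz spaces as real interpolation spaces of Lebesgue spaces) are all standard, so I expect the one delicate point to be the identity $f = |\nabla|^{-\alpha}(|\nabla|^{\alpha} f)$ itself: for a general tempered distribution it holds only modulo polynomials. The clean way around this is to prove the inequality first for $f$ ranging over a dense subclass, such as Schwartz functions or functions whose Fourier transform is supported away from the origin, where the identity and all the norms involved are unambiguous and finite, and then pass to the limit. For $s < \infty$ the limiting step is immediate since such functions are dense in $L^{p,s}$ and hence in $\dot{W}^{\alpha;p,s}$; at the endpoint $s = \infty$ the weak-type space is non-separable, so one instead invokes weak-$*$ lower semicontinuity of the Lorentz quasinorm, or simply takes $\dot{W}^{\alpha;p,\infty}$ to be the image of $L^{p,\infty}$ under $|\nabla|^{-\alpha}$ by definition. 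This bookkeeping about the precise realization of the homogeneous space, rather than any hard analysis, is the main thing to get right.
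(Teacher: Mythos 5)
The paper does not prove this proposition; it quotes it from Lemari\'e-Rieusset \citep{Lemarie02}, so there is no in-paper argument to compare against. Your two routes are both standard and correct: the real-interpolation argument (viewing $L^{p,s}$ as $(L^{p_0},L^{p_1})_{\theta,s}$ and interpolating the Lebesgue-space Hardy--Littlewood--Sobolev bound, noting the exponent relation is affine in $1/p$) is essentially the textbook derivation of the cited theorem, while the O'Neil route ($K_\alpha\in L^{n/(n-\alpha),\infty}$ plus O'Neil's convolution inequality, with second indices $1/s\le 1/s+1/\infty$) is an equally clean one-step proof. Your flag about the modulo-polynomials realization of $\dot W^{\alpha;p,s}$ is the right thing to worry about, though it is harmless here: since $1/\tilde p = 1/p - \alpha/n \in (0,1)$, the Riesz potential $K_\alpha * g$ converges and selects the canonical representative, so density of Schwartz functions with Fourier support off the origin (for $s<\infty$) or weak-$*$ lower semicontinuity (for $s=\infty$) closes the argument as you say.
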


\begin{prop}[Strichartz estimates; {\citep{KeelTao}}]~
\begin{itemize}
\item[(i)]
Let $2<p<\infty$. Then for every $f \in L^{p'}(\mathbb{R}^n)$ and $t\ne 0$, we have the dispersive estimate
\[
\norm{L_x^{p,2}(\mathbb{R}^n)}{e^{it\Delta}f}
\leq C |t|^{-n(\frac{1}{2} - \frac{1}{p})} \norm{L^{p',2}(\mathbb{R}^n)}{f}
\]
where $1/p' + 1/p = 1$.
\item[(ii)]
Let $(p,q) \in [2,\infty]\times[2,\infty)$ be an $L^2$-admissible pair. Then for every $f \in L^{2}(\mathbb{R}^n)$, we have
\[
\norm{L_t^p(\mathbb{R}, L_x^{q,2}(\mathbb{R}^n))}{e^{it\Delta}f}
\leq C \norm{L^{2}(\mathbb{R}^n)}{f}
\]
where $C$ is an absolute constant depending only on $p$, $q$, and $n$.
If $(\tilde{p},\tilde{q})$ is another $L^2$-admissible pair and $F \in L_t^{p'}(\mathbb{R}, L_x^{q',2}(\mathbb{R}^n))$, then we also have
\[
\norm{L_t^{\tilde{p}}(\mathbb{R}, L_x^{\tilde{q},2}(\mathbb{R}^n))}{\int_{0}^{t} e^{i(t-s)\Delta}F(s) ds}
\leq C \norm{L_t^{p'}(\mathbb{R}, L_x^{q',2}(\mathbb{R}^n))}{F}
\]
where $C$ is an absolute constant depending only on $p$, $\tilde{p}$, $q$, $\tilde{q}$, and $n$.
\end{itemize}
\end{prop}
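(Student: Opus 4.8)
The plan is to derive both parts from the two elementary facts about the free propagator — the unitarity $\norm{L^2}{e^{it\Delta}f}=\norm{L^2}{f}$ and the explicit kernel bound $\norm{L^\infty}{e^{it\Delta}f}\le(4\pi|t|)^{-n/2}\norm{L^1}{f}$ — feeding them first into real interpolation to obtain (i) and then into a $TT^*$ argument to obtain (ii).

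For (i) I would fix $t\ne0$ and view $e^{it\Delta}$ as a single linear operator that is bounded from $L^1$ to $L^\infty$ with norm $\le C|t|^{-n/2}$ and from $L^2$ to $L^2$ with norm $1$. Real (Marcinkiewicz-type) interpolation of operators between Lorentz spaces — see e.g.\ \cite[Chapter 1]{Grafakos} or \cite[Chapter 2]{Lemarie02} — applied with $\theta=2/p\in(0,1)$ (here $2<p<\infty$) and the common second index $2$ identifies $(L^1,L^2)_{\theta,2}=L^{p',2}$ and $(L^\infty,L^2)_{\theta,2}=L^{p,2}$, and gives
\[
\norm{L^{p,2}(\mathbb{R}^n)}{e^{it\Delta}f}\le\big(C|t|^{-n/2}\big)^{1-\theta}\,\norm{L^{p',2}(\mathbb{R}^n)}{f}=C'|t|^{-n(\frac12-\frac1p)}\norm{L^{p',2}(\mathbb{R}^n)}{f},
\]
because $(1-\theta)\tfrac n2=n(\tfrac12-\tfrac1p)$. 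The point that the real interpolation method lets one fix the second Lorentz index equal to $2$ on both sides is exactly what yields the $L^{p,2}$-refinement rather than merely $L^p$.

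For (ii) I would set $Tf(t):=e^{it\Delta}f$. By $TT^*$ duality, the homogeneous estimate $T:L^2\to L_t^pL_x^{q,2}$ is equivalent to
\[
\Big\|\int_{\mathbb{R}}e^{i(t-s)\Delta}F(s)\,ds\Big\|_{L_t^pL_x^{q,2}}\le C\,\norm{L_t^{p'}L_x^{q',2}}{F}
\]
for $L^2$-admissible $(p,q)$. Part (i) with $p$ replaced by $q$ (legitimate for $2<q<\infty$; the case $q=2$ forces $p=\infty$ and is just unitarity) gives $\norm{L_x^{q,2}}{e^{i(t-s)\Delta}F(s)}\le C|t-s|^{-n(\frac12-\frac1q)}\norm{L_x^{q',2}}{F(s)}$ for each $s$; admissibility makes $n(\tfrac12-\tfrac1q)=\tfrac2p$, so after moving the $L_x^{q,2}$ norm inside the $s$-integral by Minkowski's inequality ($L_x^{q,2}$ being normable for $q\in(1,\infty)$) one closes the estimate by the one-dimensional Hardy--Littlewood--Sobolev inequality with exponents $(p',p)$, available precisely when $0<\tfrac2p<1$, i.e.\ for every admissible pair except the endpoint $p=2$. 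The retarded estimate between two admissible pairs $(\tilde p,\tilde q)$ and $(p,q)$ then follows: the untruncated version $TT^*:L_t^{p'}L_x^{q',2}\to L_t^{\tilde p}L_x^{\tilde q,2}$ is obtained for free by composing the homogeneous estimates for $T$ and $T^*$, the sharp cutoff $\mathbf 1_{\{s<t\}}$ is inserted by the Christ--Kiselev lemma whenever $\tilde p>p'$, and the remaining mixed cases are filled in by real interpolation in the mixed-norm spaces $L_t^pL_x^{q,2}$.

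The hard part will be the double endpoint $p=\tilde p=2$, which forces $q=\tilde q=2n/(n-2)$ ($n\ge3$): there Hardy--Littlewood--Sobolev fails at the critical homogeneity on $L_t^2$, and Christ--Kiselev is inapplicable because $\tilde p=p'$. This is precisely the theorem of Keel and Tao \citep{KeelTao}: dyadically decompose the kernel $e^{i(t-s)\Delta}$ into pieces supported where $|t-s|\sim2^j$, estimate each piece by bilinear interpolation between the $L^2\times L^2$ and $L^1\times L^1$ bounds, and reassemble the pieces by an atomic/real-interpolation argument. It is exactly here that carrying the spatial norm as $L_x^{q,2}$ is essential and not cosmetic, since the Keel--Tao endpoint argument naturally outputs $L_t^2L_x^{q,2}$ and one recovers $L_t^2L_x^q$ only afterwards via the embedding $L_x^{q,2}\hookrightarrow L_x^q$ (valid as $q\ge2$). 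Accordingly I would either reproduce this dyadic/bilinear argument in the Lorentz setting or simply invoke \citep{KeelTao} for the endpoint and patch the non-endpoint and retarded cases with the elementary $TT^*$/HLS/Christ--Kiselev reasoning above.
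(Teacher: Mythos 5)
Your plan for (i) is exactly the paper's: real interpolation of the $L^2\to L^2$ and $L^1\to L^\infty$ bounds for $e^{it\Delta}$ with the second Lorentz index held at $2$, which the paper attributes to the first page of \citep{KeelTao} and the off-diagonal Marcinkiewicz theorem in \cite[Theorem 1.4.19]{Grafakos}.

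For (ii), you and the paper end up in the same place but by routes of different granularity. The paper simply declares (ii) to be a restatement of the abstract Strichartz theorem \cite[Theorem~10.1]{KeelTao}: that theorem takes as input an energy estimate on a Hilbert space and a decay estimate between an interpolation couple $(B_0,B_1)$, and outputs Strichartz bounds with spatial norm the \emph{real} interpolation space $(B_0^*,B_1^*)_{\theta,2}$ — which for $(L^2,L^\infty)$ is exactly $L^{q,2}$. So the Lorentz refinement is not an add-on; it is what Keel--Tao's abstract machine already produces, including the retarded estimate between arbitrary admissible pairs and the double endpoint. Your proposal instead rebuilds the non-endpoint theory by hand ($TT^*$, Minkowski, one-dimensional HLS, then Christ--Kiselev for $\tilde p>p'$) and reserves the Keel--Tao dyadic/bilinear argument for the genuine endpoint. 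That is a perfectly valid and more instructive decomposition, and your observation that HLS and Christ--Kiselev port to $L_x^{q,2}$ because these are normable Banach spaces for $q\in(1,\infty)$ is the right justification. The one place your sketch is thinner than what it asserts is the claim that ``the remaining mixed cases are filled in by real interpolation in the mixed-norm spaces $L_t^pL_x^{q,2}$'': interpolation of vector-valued/mixed-norm Lorentz spaces is delicate and not a one-liner, and this is precisely the technical debt that citing \cite[Theorem~10.1]{KeelTao} wholesale avoids. Since you already offer that citation as a fallback, there is no real gap — just note that, for the inhomogeneous estimate with two different admissible pairs, leaning on Keel--Tao directly (as the paper does) is cleaner than trying to interpolate the mixed norms yourself.
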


\begin{remk}
(i) comes from the real interpolation of $L^2 \to L^2$ and $L^1 \to L^\infty$ operator norms of $e^{it\Delta}$ in the first page of \citep{KeelTao}. For details, see the off-diagonal Marcinkiewicz interpolation theorem \cite[Theorem 1.4.19]{Grafakos}.
(ii) is a restatement of \cite[Theorem 10.1]{KeelTao}.
\end{remk}

\begin{prop}[{\cite[Proposition 6]{AlouiTayachi}}] \label{prop:2.4-compact-dct}
Let $\gamma \geq 0$ and $K$ be a compact subset of $H^\gamma(\mathbb{R}^n)$. Then for every $L^2$-admissible pair $(p,q)$ with $p \ne \infty$, we have
\[
\lim_{T\to+0} \sup_{f\in K} \norm{L_t^p([0,T], W_x^{\gamma;q,2}(\mathbb{R}^n))}{e^{it\Delta}f} = 0.
\]
\end{prop}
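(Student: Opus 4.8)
The plan is to split the statement into (i) a pointwise-in-$f$ decay claim, that $\norm{L_t^p([0,T], W_x^{\gamma;q,2}(\mathbb{R}^n))}{e^{it\Delta}f}\to 0$ as $T\to+0$ for each fixed $f\in H^\gamma(\mathbb{R}^n)$, and (ii) an upgrade of this to uniformity over the compact set $K$ via a standard $\varepsilon/3$ argument. The hypothesis $p\neq\infty$ is used only in step (i): for $p=\infty$ the norm over $[0,T]$ controls $\norm{W_x^{\gamma;q,2}(\mathbb{R}^n)}{e^{it\Delta}f}$ at times near $0$ and hence tends to $\norm{W_x^{\gamma;q,2}(\mathbb{R}^n)}{f}$, which need not vanish (nor even be finite).

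First I would record the $H^\gamma$-level Strichartz bound. Since $\langle\nabla\rangle^\gamma$ is a Fourier multiplier, it commutes with the free propagator, so by the definition of the Sobolev-Lorentz norm and the homogeneous Strichartz estimate (Proposition, part (ii), applied to $\langle\nabla\rangle^\gamma f\in L^2(\mathbb{R}^n)$),
\[
\norm{L_t^p(\mathbb{R}, W_x^{\gamma;q,2}(\mathbb{R}^n))}{e^{it\Delta}f}
= \norm{L_t^p(\mathbb{R}, L_x^{q,2}(\mathbb{R}^n))}{e^{it\Delta}\langle\nabla\rangle^\gamma f}
\leq C\norm{L^2(\mathbb{R}^n)}{\langle\nabla\rangle^\gamma f}
= C\norm{H^\gamma(\mathbb{R}^n)}{f}.
\]
In particular, for fixed $f$ the nonnegative function $t\mapsto \norm{W_x^{\gamma;q,2}(\mathbb{R}^n)}{e^{it\Delta}f}^p$ is integrable on $[0,\infty)$, so absolute continuity of the Lebesgue integral gives $\int_0^T \norm{W_x^{\gamma;q,2}(\mathbb{R}^n)}{e^{it\Delta}f}^p\,dt\to 0$ as $T\to+0$; taking $p$-th roots yields the single-function claim (i).

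Next I would prove (ii). Fix $\varepsilon>0$, and let $\delta>0$ be chosen below. By compactness of $K$ in $H^\gamma(\mathbb{R}^n)$, pick $f_1,\dots,f_N\in K$ with $K\subseteq\bigcup_{j=1}^N B_{H^\gamma}(f_j,\delta)$. Given any $f\in K$, choose $j$ with $\norm{H^\gamma(\mathbb{R}^n)}{f-f_j}<\delta$; then by the triangle inequality together with the full-line Strichartz bound above,
\[
\norm{L_t^p([0,T], W_x^{\gamma;q,2}(\mathbb{R}^n))}{e^{it\Delta}f}
\leq \norm{L_t^p([0,T], W_x^{\gamma;q,2}(\mathbb{R}^n))}{e^{it\Delta}f_j} + C\delta.
\]
Fixing $\delta$ so that $C\delta<\varepsilon/2$, and then using step (i) for each of the finitely many $f_j$ to pick $T_0>0$ with $\norm{L_t^p([0,T], W_x^{\gamma;q,2}(\mathbb{R}^n))}{e^{it\Delta}f_j}<\varepsilon/2$ for all $T\in(0,T_0]$ and all $j$, we conclude $\sup_{f\in K}\norm{L_t^p([0,T], W_x^{\gamma;q,2}(\mathbb{R}^n))}{e^{it\Delta}f}<\varepsilon$ for $T\in(0,T_0]$, which is the assertion.

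I do not expect a genuine obstacle here: the argument is routine once the $H^\gamma$-valued Strichartz estimate is available. The only points needing a little care are the commutation of $\langle\nabla\rangle^\gamma$ with $e^{it\Delta}$ (so that the $W^{\gamma;q,2}$-valued bound reduces to the stated $L^2\to L_t^pL_x^{q,2}$ one) and the role of $p<\infty$ in making the time integral vanish. As an alternative to invoking absolute continuity of the integral in step (i), one could instead use density of Schwartz functions in $H^\gamma(\mathbb{R}^n)$, verify (i) directly for such smooth data (where $t\mapsto\norm{W_x^{\gamma;q,2}(\mathbb{R}^n)}{e^{it\Delta}f}$ is bounded on, say, $[0,1]$), and then run the same finite-net argument; but with $K$ already compact the route above is the shortest.
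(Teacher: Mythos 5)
The paper does not reprove this proposition: it imports it directly as \cite[Proposition~6]{AlouiTayachi} and gives no argument of its own, so there is no in-paper proof to compare against. Your argument is correct and is the natural one — the $W^{\gamma;q,2}$-valued Strichartz bound (via commuting $\langle\nabla\rangle^\gamma$ past $e^{it\Delta}$) gives global-in-time integrability of $t\mapsto\norm{W_x^{\gamma;q,2}}{e^{it\Delta}f}^p$ for each fixed $f\in H^\gamma$, absolute continuity of the integral then yields the single-datum decay (and correctly explains why $p=\infty$ must be excluded), and the finite $\delta$-net in the compact $K$ together with the same Strichartz bound applied to $f-f_j$ upgrades this to the uniform statement. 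This matches the standard line of reasoning used for such density/compactness lemmas (and is essentially how Aloui--Tayachi argue as well); no gaps.
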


\begin{remk}
This proposition also has an analog for homogeneous Sobolev-Lorentz spaces.
\end{remk}

\subsection{Some Local Well-posedness Results} \label{Subsection:2.2}

In this subsection, it would be convenient to abbreviate some critical exponents in the Lebesgue- or Lorentz-type function spaces. For $n\geq 3$ and $0\leq \beta<\min(2, n/2)$, we denote
\[
p_\beta = \frac{2(n+2)}{n-2+2\beta}, \quad
q_\beta = \frac{2n(n+2)}{n^2+4-4\beta}, \quad
\tilde{q}_\beta = \frac{2n(n+2)}{n^2-2n-4\beta}.
\]
(We only consider $0\leq \beta<3/4$ for $\tilde{q}_\beta$ when $n=3$.)
Some notable properties are as follows.
\begin{itemize}
\item $(p_\beta, q_\beta)$ is $L^2$-admissible and $(p_\beta, \tilde{q}_\beta)$ is $\dot{H}^1$-admissible.
\item $p_0 = \tilde{q}_{0} = \dfrac{2(n+2)}{n-2}$ and
$\Big(\dfrac{1}{2}, \dfrac{n+2}{2n}\Big) = \Big(0, \dfrac{b}{n}\Big) + k\Big(\dfrac{1}{p_0}, \dfrac{1}{\tilde{q}_{0}}\Big) + \Big(\dfrac{1}{p_b}, \dfrac{1}{q_b}\Big)$.
\item Sobolev embedding $\dot{W}^{1;q_\beta,2}(\mathbb{R}^n) \hookrightarrow L^{\tilde{q}_\beta,2}(\mathbb{R}^n)$ holds.
\end{itemize}
We are now ready to discuss the local well-posedness of INLS \eqref{inls} in the function space
\[
C_t([0, T), H_x^1(\mathbb{R}^n)) \cap L_t^{p_b}([0, T), W_x^{1;q_b,2}(\mathbb{R}^n)).
\]
Our arguments have a minor alteration from \citep{AlouiTayachi};
we use two \emph{controlling spaces} together
\[
L_t^{p_0}([0, T], W_x^{1;q_0,2}(\mathbb{R}^n)) \cap L_t^{p_b}([0, T], W_x^{1;q_b,2}(\mathbb{R}^n))
\]
rather than one controlling space $L_t^{k+2}([0, T], W_x^{1;\frac{n(k+2)}{n-b+k},2}(\mathbb{R}^n))$ as in \citep{AlouiTayachi}.
Under this alternative setup, the coverage of $b$ for $n=3$ improves from $0<b<1$ to $0<b<3/2$.
The ideas of proof are provided separately when necessary.

\begin{thrm}[Local well-posedness in $H^1$; {\cite[Theorems 1.2 and 1.4]{AlouiTayachi}}] \label{thrm:2.5-well-posedness}
Let $n\geq 3$, $0<b<\min(2,n/2)$. Then every initial data $\phi \in H^1(\mathbb{R}^n)$ admits a unique solution $u$ of INLS \eqref{inls} with the maximal lifespan $T_*(\phi)>0$ in the solution space
\[
C_t([0, T_*(\phi)), H_x^1(\mathbb{R}^n)) \cap L_t^{p_b}([0, T_*(\phi)), W_x^{1;q_b,2}(\mathbb{R}^n)).
\]
In addition, we have the following properties.

\begin{itemize}
\item[(i)]
$u$ is unique in $L_t^{p_0}([0, T], W_x^{1;q_0,2}(\mathbb{R}^n)) \cap L_t^{p_b}([0, T], W_x^{1;q_b,2}(\mathbb{R}^n))$ for $0<T<T_*(\phi)$.
\item[(ii)]
$u \in L_t^p([0, T], W_x^{1;q,2}(\mathbb{R}^n))$ for $0<T<T_*(\phi)$ and every $L^2$-admissible pair $(p,q)$.
\item[(iii)]
If $\norm{\dot{H}_x^1}{\phi}$ is small enough, then $T_*(\phi) = \infty$ and $u$ lies in $L_t^p([0, \infty), W_x^{1;q,2}(\mathbb{R}^n))$ for every $L^2$-admissible pair $(p,q)$.
Moreover, $u$ scatters in $H_x^1(\mathbb{R}^n)$ forward in time, meaning that $\lim\limits_{t\to\infty} e^{-it\Delta}u(t)$ exists in the sense of $H_x^1(\mathbb{R}^n)$.
\item[(iv)]
If $k \geq 1$, then for every $0<T<T_*(\phi)$, there is $\delta_0 > 0$ such that if $\psi \in H_x^1(\mathbb{R}^n)$ with $\norm{H_x^1}{\phi-\psi} < \delta_0$, we have $T_*(\psi) > T$ and for every $L^2$-admissible pair $(p,q)$, the corresponding maximal solution $v$ satisfies
\[
\norm{L_t^{p}([0, T], W_x^{1;q,2}(\mathbb{R}^n))} {u - v}
\leq C \norm{H_x^1(\mathbb{R}^n)} {\phi - \psi}.
\]
\item[(v)]
If $k<1$, and if $\phi_j \to \phi$ in $H_x^1(\mathbb{R}^n)$,
then for every $0<T<T_*(\phi)$,
we have $u_j \to u$ in $C_t([0, T], H_x^1(\mathbb{R}^n))$,
where $u_j$ is the solution of INLS \eqref{inls} with initial data $\phi_j$.
\end{itemize}
\end{thrm}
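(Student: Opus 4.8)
The plan is to run a Cazenave-type contraction-mapping argument, in the spirit of \citep{Cazenave03,AlouiTayachi}, on the Duhamel map
\[
\Phi(u)(t) = e^{it\Delta}\phi - i\mu\int_0^t e^{i(t-s)\Delta}\big(|x|^{-b}|u|^k u\big)(s)\,ds
\]
for \eqref{inls}, carried out simultaneously in the two controlling spaces of Subsection~\ref{Subsection:2.2}. Fixing $T\in(0,\infty]$, set $X_T = L_t^{p_b}([0,T],W_x^{1;q_b,2})\cap L_t^{p_0}([0,T],W_x^{1;q_0,2})$ and look for a fixed point of $\Phi$ in a ball $\mathcal B_R = \{u\in X_T:\norm{X_T}{u}\le R\}$. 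When $k\ge 1$ the map $z\mapsto|z|^k z$ is locally Lipschitz, so $\mathcal B_R$ with the $X_T$-metric is adequate; when $k<1$ it is only H\"older continuous of order $k$, so I would instead equip $\mathcal B_R$ with the derivative-free distance $d(u,v) = \norm{L_t^{p_b}([0,T],L_x^{q_b,2})}{u-v} + \norm{L_t^{p_0}([0,T],L_x^{q_0,2})}{u-v}$ and verify completeness of $(\mathcal B_R,d)$ by the standard weak-compactness argument, using the uniform $H^1_x$ bound the ball supplies via \eqref{inls}.

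The core is a bound on $N(u) = |x|^{-b}|u|^k u$ and on $\langle\nabla\rangle N(u)$. By the inhomogeneous Strichartz estimate, controlling $\Phi(u)$ in $X_T$ reduces to controlling the free evolution $e^{it\Delta}\phi$ there — finite by the homogeneous Strichartz estimate, and small either for small $T$ (by Proposition~\ref{prop:2.4-compact-dct} and its homogeneous analogue) or for small $\norm{\dot H^1_x}{\phi}$ — together with the dual-Strichartz quantity $\norm{L_t^2([0,T],L_x^{\frac{2n}{n+2},2})}{\langle\nabla\rangle N(u)}$. Expanding $\langle\nabla\rangle N(u)$ yields the weights $|x|^{-b}$ and, from $\nabla|x|^{-b}$, the more singular $|x|^{-b-1}$, paired against $|u|^k$ and $\nabla u$; I would estimate each piece by H\"older's inequality for Lorentz spaces, placing the weights in $L^{n/b,\infty}$ and $L^{n/(b+1),\infty}$ (the latter membership needs $b+1<n$, which holds since $b<n/2$ and $n\ge 3$), the $k$ copies of $u$ into $L_t^{p_0}L_x^{\tilde q_0,2}$, and the remaining factor into $L_t^{p_b}L_x^{q_b,2}$ (for $\nabla u$) or $L_t^{p_b}L_x^{\tilde q_b,2}$ (for an extra $u$). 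This closes exactly because of the arithmetic identity $\big(\tfrac12,\tfrac{n+2}{2n}\big) = \big(0,\tfrac bn\big) + k\big(\tfrac1{p_0},\tfrac1{\tilde q_0}\big) + \big(\tfrac1{p_b},\tfrac1{q_b}\big)$ from Subsection~\ref{Subsection:2.2}, after which the Sobolev embeddings $\dot W^{1;q_\beta,2}\hookrightarrow L^{\tilde q_\beta,2}$ return the $L^{\tilde q_\beta,2}$ factors to controlling norms. This gives $\norm{X_T}{\Phi(u)}\lesssim\norm{X_T}{e^{it\Delta}\phi}+\norm{X_T}{u}^{k+1}$ and a matching difference estimate — Lipschitz in $X_T$ when $k\ge1$, and in the $d$-metric with a factor $\norm{X_T}{u}^k+\norm{X_T}{v}^k$ when $k<1$ — so that for $R$ and the free-evolution norm small enough $\Phi$ contracts on $\mathcal B_R$, producing $u$ and its maximal lifespan $T_*(\phi)$ by the usual continuation.

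The remaining claims are routine refinements. Uniqueness (i) follows from the difference estimate on the largest subinterval on which two candidate solutions agree, together with continuity; persistence of regularity (ii) follows by reinserting $u$ into the Duhamel formula with an arbitrary $L^2$-admissible pair on the left of the Strichartz estimate, the right-hand side being already controlled; global existence and $H^1_x$-scattering for small $\norm{\dot H^1_x}{\phi}$ (iii) follow by taking $T=\infty$, so that the global critical Strichartz norm of $e^{it\Delta}\phi$ is $\lesssim\norm{\dot H^1_x}{\phi}$ and hence small, whence $N(u)\in L_t^2L_x^{\frac{2n}{n+2},2}(\mathbb R\times\mathbb R^n)$ and $e^{-it\Delta}u(t)$ is Cauchy in $H^1_x$ as $t\to\infty$; and Lipschitz dependence for $k\ge1$ (iv) is the $X_T$-metric contraction applied to the two data, once continuity of $T_*$ ensures the perturbed solution survives up to time $T$.

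The main obstacle is item (v), the low-power case $k<1$: since the nonlinearity is then only H\"older continuous, no Lipschitz-in-$H^1$ stability is available, the $d$-metric contraction only yields $u_j\to u$ in $C_t([0,T],L^2_x)$, and upgrading this to $C_t([0,T],H^1_x)$ requires a separate limiting step combining the uniform $\dot H^1_x$ bound on $[0,T]$ with weak lower semicontinuity and the continuity in time of the energy; the same loss of Lipschitz continuity is what forces the weaker metric already in the construction. A secondary, more computational difficulty is checking that the exponent choices above — in particular the appearance of the more singular weight $|x|^{-b-1}$ and of $\tilde q_b$ — remain admissible over the whole range $0<b<\min(2,n/2)$; in the borderline subranges, notably $n=3$ with $b$ near $3/2$, one must redistribute the H\"older exponents among the $u$-factors instead of using the naive split, and that the scheme still closes there is precisely what yields the improved $b$-range compared with the single controlling space of \citep{AlouiTayachi}.
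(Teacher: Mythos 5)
Your proposal follows essentially the same route as the paper: a Cazenave-style contraction on the Duhamel map in the two controlling Lorentz--Strichartz spaces $L_t^{p_0}W_x^{1;q_0,2}\cap L_t^{p_b}W_x^{1;q_b,2}$, with the ball given a derivative-free metric and completeness secured via reflexivity and weak compactness, and the nonlinearity closed by Lorentz--H\"older against $|x|^{-b}\in L^{n/b,\infty}$, $|x|^{-(b+1)}\in L^{n/(b+1),\infty}$ together with the admissibility identity and the embeddings $\dot W^{1;q_\beta,2}\hookrightarrow L^{\tilde q_\beta,2}$. The only cosmetic differences are that the paper keeps the derivative-free metric for all $k$ (not just $k<1$) and simply defers (i)--(iii), (v) to the corresponding arguments of \cite{AlouiTayachi} with the modified controlling spaces, spelling out only (iv); the separate limiting step you flag for (v) (upgrading $C_tL^2_x$ convergence to $C_tH^1_x$ via weak convergence, energy conservation, and Hilbert-space norm convergence) is exactly what \cite[Section~5]{AlouiTayachi} supplies.
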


\begin{proof}
We sketch the proof for the inhomogeneous Sobolev space setups, which works whenever $k>0$.
Once we determine the solution, we can show the properties (i)--(iv) with a priori estimates.

Consider two parameters $T,M>0$. We start with defining the set
\begin{align*}
X = X(T,M) := \big\{ & u \in L_t^{p_0}([0, T], W_x^{1;q_0,2}(\mathbb{R}^n)) \cap L_t^{p_b}([0, T], W_x^{1;q_b,2}(\mathbb{R}^n)) : \\
& \max_{\beta=0,b} \norm{L_t^{p_\beta}([0, T], \dot{W}_x^{1;q_\beta,2}(\mathbb{R}^n)) }{u} \leq M
\big\}
\end{align*}
equipped with the metric
\[
d(u,v) := \max_{\beta=0,b} \norm{L_t^{p_\beta}([0, T], L_x^{q_\beta,2}(\mathbb{R}^n)) }{u-v}.
\]
Note how changing the choice of controlling spaces affects the definition of $X$ and $d$.

We claim that the metric space $(X, d)$ is complete. We note that the function space
\[
L_t^{p_0}([0, T], \dot{W}_x^{1;q_0,2}(\mathbb{R}^n)) \cap
L_t^{p_b}([0, T], \dot{W}_x^{1;q_b,2}(\mathbb{R}^n))
\]
is a reflexive Banach space, since $L^{p,2}(\mathbb{R}^n)$ is reflexive for every $1<p<\infty$ and also the intersection of two compatible reflexive Banach spaces is reflexive.\footnote{See \cite[Chapter 3, pp.174--175]{BennettColin88} for the intersection of two compatible reflexive Banach spaces.}
A closed unit ball in this space is thus sequentially compact in weak topology (\cite[Theorems 3.7, 3.18]{Brezis}), and we obtain the claim.

To establish a priori estimates,
we define the Duhamel operator $\mathscr{D}_{\phi}(u)$ as below, where $u \in X$ and $\phi \in H_x^1(\mathbb{R}^n)$ stand for a solution and its initial data respectively.
\[
\mathscr{D}_{\phi}(u) = e^{it\Delta}\phi - i\mu\int_0^t {e^{i(t-s)\Delta}(|x|^{-b}|u|^ku)(s)} ds
\]
Let $(p,q)$ be any $L^2$-admissible pair. Then we have the following nonlinearity estimate.
\begin{equation} \label{nonl-pt-est}
\begin{aligned}
&	\norm{L_t^p([0, T], \dot{W}_x^{1;q,2})}{\int_0^t {e^{i(t-s)\Delta}(|x|^{-b}|u|^ku)(s)} ds}
\\
\leq\;&	C \norm{L_t^{2}([0, T], \dot{W}_x^{1;\frac{2n}{n+2},2})}{|x|^{-b}|u|^ku}
\\
\leq\;&	C \norm{L_x^{\frac{n}{b},\infty}}{|x|^{-b}}
\norm{L_t^{2}([0, T], \dot{W}_x^{1;\frac{2n}{n+2-2b},2})}{|u|^ku}
\\
&	+ C\norm{L_x^{\frac{n}{b+1},\infty}}{|x|^{-(b+1)}}
\norm{L_t^{2}([0, T], L_x^{\frac{2n}{n-2b},2})}{|u|^ku}
\\
\leq\;&	C \norm{L_t^{2}([0, T], \dot{W}_x^{1;\frac{2n}{n+2-2b},2})}{|u|^ku}
\\
\leq\;&	C \norm{L_t^{p_0}([0, T], L_x^{{\tilde{q}_0},2})}{u}^k
\norm{L_t^{p_b}([0, T], \dot{W}_x^{1;q_b,2})}{u}
\end{aligned}
\end{equation}
This leads to the following slightly altered a priori estimate in $\dot{S}^1([0, T])$ level,
\begin{equation} \label{alt-apriori-S1}
\begin{aligned}
&	\norm{L_t^p([0, T], \dot{W}_x^{1;q,2})}{\mathscr{D}_{\phi}{u}} \\
\leq\,&	\norm{L_t^p([0, T], \dot{W}_x^{1;q,2})}{e^{it\Delta}\phi}
+ C \norm{L_t^{p_0}([0, T], \dot{W}_x^{1;q_0,2})}{u}^k
\norm{L_t^{p_b}([0, T], \dot{W}_x^{1;q_b,2})}{u}
\end{aligned}
\end{equation}
and similarly we also have the following estimate in $\dot{S}^0([0, T])$ level.
\begin{equation} \label{alt-apriori-S0}
\begin{aligned}
&	\norm{L_t^p([0, T], L_x^{q,2})}{\mathscr{D}_\phi{u} - \mathscr{D}_\psi{v}} \\
\leq\;&	C \norm{L_x^2}{\phi - \psi}
+	C \big(\! \norm{L_t^{p_0}([0, T], \dot{W}_x^{1;q_0,2})}{u}^k + \norm{L_t^{p_0}([0, T], \dot{W}_x^{1;q_0,2})}{v}^k \!\big)
\norm{L_t^{p_b}([0, T], L_x^{q_b,2})}{u-v}
\end{aligned}
\end{equation}
Let $0 < \epsilon < \epsilon_0 = \epsilon_0(n,b)$ be small enough. After choosing $M = 2\epsilon$ and $T>0$ such that
\begin{equation} \label{lwp-T-determine}
\max_{\beta=0,b}
\norm{L_t^{p_\beta}([0, T], \dot{W}_x^{1;q_\beta,2}(\mathbb{R}^n))}{e^{it\Delta}\phi}
< \epsilon,
\end{equation}
the operator $\mathscr{D}_\phi: X \to X$ becomes a contraction mapping so we can apply Banach fixed point theorem. This proves the existence and uniqueness of the solution $u$ in $X$.

To show the five additional properties,
we can modify the arguments of \cite[pp. 5423--5426]{AlouiTayachi} for (i)--(iii) and \cite[Section 5]{AlouiTayachi} for (iv)--(v), keeping track of our choice of controlling spaces.
We only look at the proof of (iv) as an example for brevity.
Let $\epsilon>0$ be small, $K \subset H_x^1(\mathbb{R}^n)$ be compact, $\phi \in K$, and $\psi$ be close to $\phi$ in $H_x^{1}(\mathbb{R}^n)$ as assumed. Then by Proposition \ref{prop:2.4-compact-dct} applied to $K$, there exists $T>0$ satisfying the following smallness conditions, where $u$ and $v$ are the solutions from the initial data $\phi$ and $\psi$ respectively. Note our change of controlling spaces in effect.
\[
\max_{w=u,v;\;\beta=0,b}\norm{L_t^{p_\beta}([0, T], \dot{W}_x^{1;q_\beta,2}(\mathbb{R}^n))}{w} < 2\epsilon
\]
From the following pointwise inequalities, where $k\geq 1$ is required for the second one,
\begin{align*}
\big| |u|^ku - |v|^kv \big|
&	\leq C (|u|^k + |v|^k) |u-v|, \\
\big| \nabla(|u|^ku - |v|^kv) \big|
&	\leq C |u|^k |\nabla(u-v)| + C (|u|^{k-1} + |v|^{k-1}) |u-v| |\nabla v|
\end{align*}
we can deduce the following a priori estimate of $u-v$ for every $L^2$-admissible pair $(p,q)$ and both regularities $\gamma=0,1$, similarly to how we derived the nonlinearity estimate \eqref{nonl-pt-est}.
\begin{align*}
\norm{L_t^{p}([0, T], \dot{W}_x^{\gamma;q,2})} {u - v}
\leq	C \norm{\dot{H}_x^\gamma} {\phi - \psi}
+ C \epsilon^k
\max_{\beta=0,b} \norm{L_t^{p_\beta}([0, T], \dot{W}_x^{\gamma;q_\beta,2})} {u-v}
\end{align*}
The claims that $T_*(\phi), T_*(\psi) > T$ and the a priori estimate of $u-v$ holds for every $0<T<T_*(\phi)$ can be proved with a similar argument to \cite[Section 5, Proof of (i)]{AlouiTayachi}.
\end{proof}

\begin{remk}
When $k> 1$ and the initial data lies in $\dot{H}_x^1(\mathbb{R}^n)$, we also have the local well-posedness results in homogeneous solution spaces, in the sense that we can change all the function spaces in Theorem \ref{thrm:2.5-well-posedness} into homogeneous forms such as $L_t^p([0, T], \dot{W}_x^{1;q,2}(\mathbb{R}^n))$ and $\dot{H}_x^{1}(\mathbb{R}^n)$.
In this case, we define $X = X(T,M)$ in the proof as a closed ball of radius $M$ in the function space
\[
L_t^{p_0}([0, T], \dot{W}_x^{1;q_0,2}(\mathbb{R}^n)) \cap L_t^{p_b}([0, T], \dot{W}_x^{1;q_b,2}(\mathbb{R}^n))
\]
with its natural metric.
This time, we do not sacrifice regularity for defining the metric, so the completeness of $X$ follows relatively easily.
After that, the procedure of showing the solution's existence, uniqueness, and the four additional properties (i)--(iv) only needs a priori estimates in $\dot{S}^1(\mathbb{R}^n)$ level, such as \eqref{alt-apriori-S1} but not \eqref{alt-apriori-S0}.
See \cite[Theorem 2.5]{KenigMerle} for the idea.
\end{remk}

\subsection{Morawetz Inequality for INLS} \label{Subsection:2.3}
This subsection extends Morawetz inequality introduced by Bourgain \citep{Bourgain99}. It initially appears for NLS with $n=3$ but is extended to every dimension $n\geq 3$ by Tao \citep{Tao05}. In fact, we can further modify the inequality to fit into INLS with every $n\geq 3$ and $0<b<\min(2,n/2)$.
We start with the following variant of Morawetz identity.

\begin{lem}[Morawetz identity for INLS]
Let $V, a : \mathbb{R}^n \to \mathbb{R}$ be two non-negative smooth functions with compact support.
Suppose that $u:I\times \mathbb{R}^n \to \mathbb{C}$ is a solution of the equation
\[
iu_t + \Delta u = V(x)|u|^{k}u
\]
and define
\[
Z(t) = 2\,\mathrm{Im} \int_{\mathbb{R}^n} \bar{u}\nabla u \cdot \nabla a ~dx.
\]
Then the following identity holds.
\begin{align*}
\partial_t Z(t)
&=	4 \sum_{j,k=1}^{n} \int_{\mathbb{R}^n} \mathrm{Re}(\overline{\partial_{x_j}u} {\partial_{x_k}u}) \partial_{x_j}\partial_{x_k}a ~dx
- \int_{\mathbb{R}^n} |u|^2 \Delta^2 a ~dx \\
&\quad	+ \frac{2k}{k+2} \int_{\mathbb{R}^n} V|u|^{k+2} \Delta a ~dx
- \frac{4}{k+2} \int_{\mathbb{R}^n} |u|^{k+2} \,\nabla V \cdot \nabla a ~dx
\end{align*}
\end{lem}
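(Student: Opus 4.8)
The plan is to carry out the standard Morawetz/virial computation: differentiate $Z(t)$ in time, substitute $u_t = i\Delta u - iV|u|^{k}u$ from the equation, and integrate by parts repeatedly, using throughout that $a$ and $V$ are smooth with compact support so that no boundary terms arise and every integral converges. Since the claimed identity is a formal pointwise-in-$t$ computation, one first establishes it for sufficiently regular solutions $u$ and then extends it to the solutions considered here by approximation in the solution space of Subsection \ref{Subsection:2.2}; the local theory there also gives continuity in $t$ of each individual term, so the identity passes to the limit.

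First I would differentiate under the integral sign to get $\partial_t Z = 2\,\mathrm{Im}\int_{\mathbb{R}^n}\big(\overline{u_t}\,\nabla u + \bar u\,\nabla u_t\big)\cdot\nabla a\,dx$ and integrate by parts in the second summand to move the derivative off $u_t$. Combining the two pieces and using $\mathrm{Im}(\bar z)=-\mathrm{Im}(z)$, this collapses to
\[
\partial_t Z = 4\,\mathrm{Im}\int_{\mathbb{R}^n} \overline{u_t}\,\nabla u\cdot\nabla a\,dx - 2\,\mathrm{Im}\int_{\mathbb{R}^n} \bar u\,(\Delta a)\,u_t\,dx.
\]
Inserting $u_t = i\Delta u - iV|u|^{k}u$ and separating the terms containing $\Delta u$ (the ``kinetic'' part) from those containing $V|u|^{k}u$ (the ``potential'' part), the remaining work splits into two essentially independent computations.

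For the kinetic part I would integrate by parts in each coordinate $x_l$ inside $-4\,\mathrm{Re}\int_{\mathbb{R}^n}\overline{\Delta u}\,\nabla u\cdot\nabla a\,dx$: using $\mathrm{Re}(\overline{\partial_l u}\,\partial_l\partial_j u)=\tfrac12\partial_j|\partial_l u|^2$ plus one more integration by parts, the diagonal contributions assemble into $-2\int(\Delta a)|\nabla u|^2\,dx$, while the off-diagonal contributions give exactly the Hessian term $4\sum_{j,k}\int\mathrm{Re}(\overline{\partial_{x_j}u}\,\partial_{x_k}u)\,\partial_{x_j}\partial_{x_k}a\,dx$. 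For the term $-2\,\mathrm{Im}\int \bar u(\Delta a)\,i\Delta u\,dx = -2\int(\Delta a)\,\mathrm{Re}(\bar u\Delta u)\,dx$, the identity $\mathrm{Re}(\bar u\Delta u)=\tfrac12\Delta|u|^2-|\nabla u|^2$ together with two integrations by parts produces $-\int|u|^2\Delta^2 a\,dx + 2\int(\Delta a)|\nabla u|^2\,dx$, and the two copies of $\int(\Delta a)|\nabla u|^2\,dx$ cancel. For the potential part, the key algebraic identity is $\mathrm{Re}\big(|u|^{k}\bar u\,\nabla u\big)=\tfrac{1}{k+2}\nabla\big(|u|^{k+2}\big)$; with it the potential contribution equals $2\int V|u|^{k+2}(\Delta a)\,dx + \tfrac{4}{k+2}\int V\,\nabla(|u|^{k+2})\cdot\nabla a\,dx$, and integrating by parts in the second integral via $\nabla\cdot(V\nabla a)=\nabla V\cdot\nabla a + V\Delta a$ turns it into $-\tfrac{4}{k+2}\int|u|^{k+2}\nabla V\cdot\nabla a\,dx - \tfrac{4}{k+2}\int V|u|^{k+2}(\Delta a)\,dx$. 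Since $2-\tfrac{4}{k+2}=\tfrac{2k}{k+2}$, collecting the $\Delta a$ terms yields precisely the coefficients in the statement.

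The argument is entirely elementary, so I do not expect a genuine obstacle. The one thing requiring care is bookkeeping: keeping track of signs, of which factor each derivative lands on through the several integrations by parts, and of the interplay between $\mathrm{Re}$ and $\mathrm{Im}$ when $u_t$ is eliminated using the equation. Compact support of $a$ and $V$ disposes of all boundary terms, and the regularity needed for the manipulations is either assumed of $u$ or recovered in the limit from the local well-posedness theory.
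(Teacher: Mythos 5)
Your proposal is correct and follows the standard virial/Morawetz computation that the paper (which states the lemma without proof, citing the NLS analogues in Bourgain and Tao) clearly has in mind. Every step checks out: the reduction of $\partial_t Z$ to $4\,\mathrm{Im}\int \overline{u_t}\,\nabla u\cdot\nabla a - 2\,\mathrm{Im}\int \bar u\,(\Delta a)\,u_t$, the cancellation of the two $\pm 2\int(\Delta a)|\nabla u|^2$ contributions in the kinetic part, and the coefficient bookkeeping $2-\tfrac{4}{k+2}=\tfrac{2k}{k+2}$ after the final integration by parts involving $\nabla\cdot(V\nabla a)$, which is exactly where the new term $-\tfrac{4}{k+2}\int|u|^{k+2}\nabla V\cdot\nabla a$ specific to INLS emerges.
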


To construct an inequality from this identity, choose $V(x)=|x|^{-b}$ and $a(x) = (|x|^2+\epsilon^2)^{1/2}\chi(x)$, where $\epsilon>0$ is small and $\chi \in C_0^\infty(\mathbb{R}^n)$ is a radially decreasing bump function with $\chi(x) = 1$ for $|x|\leq 1$ and $\chi(x) = 0$ for $|x|\geq 2$.
We observe the convexity of $a$ on $|x|\leq 1$, the non-negativity of the following three functions on $|x|\leq 1$,
\begin{align*}
x\cdot \nabla a &= \frac{|x|^2}{(|x|^2+\epsilon^2)^{1/2}} \\
\Delta a &= \frac{n-1}{(|x|^2+\epsilon^2)^{1/2}} + \frac{\epsilon^2}{(|x|^2+\epsilon^2)^{3/2}} \\
-\Delta^2 a &= \frac{(n-1)(n-3)}{(|x|^2+\epsilon^2)^{3/2}} + \frac{6(n-3)\epsilon^2}{(|x|^2+\epsilon^2)^{5/2}} + \frac{15\epsilon^4}{(|x|^2+\epsilon^2)^{7/2}}
\end{align*}
and the uniform boundedness of $a$ and its derivatives on $1\leq|x|\leq2$ with respect to $\epsilon$. Ignoring some positive terms, we see that
\begin{equation*}
\partial_t Z(t)
\geq	c \int_{|x|\leq 1} \frac{|u|^{k+2}}{|x|^{b}(|x|^2+\epsilon^2)^{1/2}} dx - CE^C.
\end{equation*}
\noindent
Integrating this in time, using the scaling invariance, and sending $\epsilon \to 0$, we obtain the following variant of Morawetz inequality.

\begin{lem}[Morawetz inequality for INLS, localized in space]
\label{lem:2.7-inls-morawetz}
Let $u:I\times \mathbb{R}^n \to \mathbb{C}$ be a solution for the equation \eqref{inls}.
Then we have the inequality
\begin{equation*}
\int_I \int_{|x|\leq A|I|^{1/2}} \frac{|u|^{2(n-b)/(n-2)}}{|x|^{b+1}} dx\, dt
\leq CE^C A |I|^{1/2}
\end{equation*}
for every interval $I \subset \mathbb{R}$ and $A \geq 1$.
\end{lem}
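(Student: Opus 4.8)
The differential inequality displayed just above the statement already isolates the positive term we are after, so the plan is simply to integrate it in time, dispose of the boundary contributions, upgrade to an arbitrary ball by scaling, and let $\epsilon\to0$. Write $I=[t_-,t_+]$ and recall $Z(t)=2\,\mathrm{Im}\int_{\mathbb{R}^n}\bar u\,\nabla u\cdot\nabla a\,dx$ with $a(x)=(|x|^2+\epsilon^2)^{1/2}\chi(x)$. Integrating $\partial_tZ(t)\ge c\int_{|x|\le1}|u|^{k+2}|x|^{-b}(|x|^2+\epsilon^2)^{-1/2}\,dx-CE^C$ over $I$ yields
\[
c\int_I\!\int_{|x|\le1}\frac{|u|^{k+2}}{|x|^b(|x|^2+\epsilon^2)^{1/2}}\,dx\,dt\;\le\;Z(t_+)-Z(t_-)+CE^C|I|,
\]
so everything reduces to a uniform bound on $|Z(t)|$.

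For the boundary term, note that for $0<\epsilon\le1$ the weight $a$ is a fixed Lipschitz function supported in $\{|x|\le2\}$ with $|\nabla a|\le C$ independent of $\epsilon$, so by Cauchy--Schwarz $|Z(t)|\le C\|\nabla u(t)\|_{L^2}\|u(t)\|_{L^2(|x|\le2)}$. The key observation is that, although the mass is not controlled by the energy in the energy-critical setting, the $L^2$ norm over a fixed ball is: H\"older on $B_2$ together with the Sobolev embedding $\dot H^1\hookrightarrow L^{2n/(n-2)}$ and the coercivity bound of Section \ref{Section:1} give $\|u(t)\|_{L^2(|x|\le2)}\le C\|u(t)\|_{L^{2n/(n-2)}}\le C\|\nabla u(t)\|_{L^2}\le CE^C$. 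Hence $|Z(t)|\le CE^C$ uniformly in $t$ and in $\epsilon\le1$, and the previous display becomes $c\int_I\int_{|x|\le1}|u|^{k+2}|x|^{-b}(|x|^2+\epsilon^2)^{-1/2}\,dx\,dt\le CE^C(1+|I|)$. Letting $\epsilon\to0$, the integrand increases pointwise to $|u|^{k+2}|x|^{-b-1}$, so monotone convergence gives
\[
\int_I\!\int_{|x|\le1}\frac{|u|^{k+2}}{|x|^{b+1}}\,dx\,dt\;\le\;CE^C(1+|I|).
\]

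It remains to remove the restriction to the unit ball, which I would do via the scaling invariance of \eqref{inls}. Given $I$ and $A\ge1$, put $\lambda=A|I|^{1/2}$ and apply the inequality just obtained to the rescaled solution $u_\lambda(t,x)=\lambda^{(n-2)/2}u(\lambda^2t,\lambda x)$ --- which again solves \eqref{inls} and has the same energy $E$ --- on the interval $\lambda^{-2}I$, whose length is $A^{-2}\le1$. Undoing the change of variables one finds
\[
\int_I\!\int_{|x|\le A|I|^{1/2}}\frac{|u|^{k+2}}{|x|^{b+1}}\,dx\,dt=\lambda\int_{\lambda^{-2}I}\!\int_{|x|\le1}\frac{|u_\lambda|^{k+2}}{|x|^{b+1}}\,dx\,dt\le\lambda\,CE^C(1+A^{-2})\le CE^CA|I|^{1/2},
\]
and since $k+2=2(n-b)/(n-2)$ this is the asserted estimate.

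The genuinely delicate point is not in this bookkeeping but in justifying the Morawetz identity, and hence the differential inequality above, for an actual $H^1$ (or $\dot H^1$) solution: the identity is stated for $V$ and $a$ smooth with compact support, whereas $V(x)=|x|^{-b}$ is singular at the origin and does not decay compactly, and $u$ need not be smooth or spatially decaying. I would handle this in the standard way --- approximate $u$ by smooth, spatially decaying solutions (and, if needed, $|x|^{-b}$ by smooth compactly supported truncations) through the local well-posedness theory of Theorem \ref{thrm:2.5-well-posedness}, run the identity and the convexity and error-term analysis for the approximants, and pass to the limit, each error term being controlled uniformly by $CE^C$ via the coercivity bound. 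One also verifies, as indicated in the text preceding the statement, that $x\cdot\nabla a$, $\Delta a$ and $-\Delta^2a$ are nonnegative on $\{|x|\le1\}$ for every $n\ge3$ --- for $n=3$ only the strictly positive $\epsilon^4$ term of $-\Delta^2a$ survives --- so that all discarded contributions really do carry a favorable sign. Finally, the only place the hypothesis $A\ge1$ enters is in bounding the scaled error $1+A^{-2}$ by $2$, which is what makes the right-hand side come out as $A|I|^{1/2}$ rather than $A|I|^{1/2}+A^{-1}|I|^{1/2}$.
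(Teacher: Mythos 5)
Your proof is correct and carries out precisely the plan the paper sketches in the paragraph preceding the lemma (integrate the Morawetz differential inequality in time, bound the boundary flux $|Z(t)|$, send $\epsilon\to0$, and rescale to pass from the unit ball and unit-length interval to $|x|\le A|I|^{1/2}$). Your H\"older--Sobolev bound of $\|u\|_{L^2(B_2)}$ by $\|\nabla u\|_{L^2}$ is exactly the right device to control $Z$ using only the energy rather than the mass, and the change-of-variables bookkeeping in the scaling step comes out correctly since $k+2=2(n-b)/(n-2)$ makes the $\lambda$-exponent equal to $-1$.
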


\subsection{Local Mass Conservation} \label{Subsection:2.4}
Lastly, we also define the \emph{local mass} of a solution as in Tao \citep{Tao05};
\[
\mathrm{Mass}(u(t); B(x_0, R))
:=	\Big(\int_{\mathbb{R}^n} \chi^2\Big(\frac{x-x_0}{R}\Big) |u(t,x)|^2~dx \Big)^{1/2}
\]
where $B(x_0, R)$ stands for the ball $\{x\in\mathbb{R}^n : |x-x_0|<R\}$ and the bump function $\chi$ is same as before.
Albeit similar to the ``global'' $L^2$ mass, the integrand is multiplied by a smooth cutoff function around a ball with the supposedly large radius $R>0$.
After some computation, for every radius $R$, we can see that
\begin{align}
|\mathrm{Mass}(u(t); B(x_0, R))| &\leq CE^{1/2}R, \label{2.lm-i} \\
|\partial_t \mathrm{Mass}(u(t); B(x_0, R))| &\leq \frac{CE^{1/2}}{R}. \label{2.lm-ii}
\end{align}
The property \eqref{2.lm-ii} is called the \emph{approximate conservation law of local masses} since the change of a local mass becomes slighter as $R$ grows. Whether the NLS equation is standard or inhomogeneous does not matter in verifying both properties.

\section{Proof of Theorem \ref{thrm:main}} \label{Section:3}

\subsection{The Main Road Map} \label{Subsection:3.1}

We will follow the idea of Tao \citep{Tao05} but use Lorentz spaces for some necessary places.
We start with fixing $E,\, [T_-, T_+],\, u$.
Since the scattering result is already established when the energy $E$ is small enough, we also assume $E \geq c > 0$.
We aim to show the existence of large absolute constants $C_0$, $C_1$, $C_2$, $C_3$, and $C_4$ depending only on $n$ and $b$ such that
\[
\int_{T_-}^{T_+}\int_{\mathbb{R}^n} |u|^{\frac{2(n+2)}{n-2}} dx\,dt
\leq	\overline{C} \exp(\overline{C}E^{\overline{C}})
\]
where $\overline{C}$ is a large absolute constant depending only on $n$, $b$, and $C_k$.
The constants $C_k$ will follow the order $C_0 \ll C_1 \ll C_4/C_2$ and $C^{C_2}\ll C_3$.

We introduce a small parameter $\eta = 1/(C_3 E^{C_2})$ and partition $[T_-, T_+]$ into disjoint subintervals $I_1,\, I_2,\, \cdots,\, I_J$ such that
\begin{equation} \label{partitioned}
\eta \leq \int_{I_j}\int_{\mathbb{R}^n} |u|^{\frac{2(n+2)}{n-2}} dx\,dt
\leq 2\eta.
\end{equation}
Our aim changes to estimating $J$ uniformly in time $T_\pm$, especially in the exponential form $J\leq \overline{C} \exp(\overline{C}E^{\overline{C}})$.
We assume $J \gg 1$ as the case $J$ is too small can be eliminated by taking a longer lifespan $[T_+,T_-]$ or using the small-data scattering theory.
We also note that $J$ is a finite number, given the compactness of $[T_+,T_-]$ and the local theory.

We start with a bootstrapping lemma that provides the $\dot{S}^1(I_j)$ boundedness uniformly in $j$. It holds for the full range $n\geq 3$ and $0<b<\min(2,n/2)$.

\begin{lem} \label{lem:3.1}
For every subinterval $I_j$, we have
\[
\norm{\dot{S}^{1}(I_j)}{u} \leq CE^C.
\]
\end{lem}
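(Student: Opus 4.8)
The plan is to run a standard continuity (bootstrap) argument on the $\dot S^1$ norm of $u$ over the subinterval $I_j$, exploiting that on each $I_j$ the scattering-size norm $\norm{L_t^{2(n+2)/(n-2)}(I_j,L_x^{2(n+2)/(n-2)})}{u}$ is comparable to $\eta^c$, hence small, while the $\dot H^1$ norm of $u(t)$ is under control via coercivity. First I would write the Duhamel formula with base point an arbitrary $t_0 \in I_j$, $u(t) = e^{i(t-t_0)\Delta}u(t_0) - i\mu\int_{t_0}^t e^{i(t-s)\Delta}(|x|^{-b}|u|^k u)(s)\,ds$, apply the Strichartz estimates of Proposition 2.3(ii) in Lorentz form to bound the $\dot S^1(I_j)$ norm of the left side by $C\norm{\dot H_x^1}{u(t_0)} + C\norm{L_t^2(I_j,\dot W_x^{1;2n/(n+2),2})}{|x|^{-b}|u|^k u}$, and then estimate the nonlinearity exactly as in the chain \eqref{nonl-pt-est}: split $\nabla(|x|^{-b}|u|^k u)$ into the term where the derivative hits $|x|^{-b}$ (absorbed by $\norm{L_x^{n/(b+1),\infty}}{|x|^{-(b+1)}}$ via Hölder in Lorentz spaces together with the Sobolev embedding $\dot W^{1;q_b,2}\hookrightarrow L^{\tilde q_b,2}$) and the term where it hits $|u|^k u$ (absorbed by $\norm{L_x^{n/b,\infty}}{|x|^{-b}}$). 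This yields the factorization $\norm{L_t^2(I_j,\dot W_x^{1;2n/(n+2),2})}{|x|^{-b}|u|^k u} \le C\norm{L_t^{p_0}(I_j,L_x^{\tilde q_0,2})}{u}^k \norm{\dot S^1(I_j)}{u}$, where the first factor is controlled by the scattering-size norm on $I_j$ using $p_0 = \tilde q_0 = 2(n+2)/(n-2)$ and the embedding $\dot W^{1;q_0,2}\hookrightarrow L^{\tilde q_0,2}$ — wait, more directly $\norm{L_t^{p_0}(I_j,L_x^{\tilde q_0,2})}{u}$ is itself a component of $\norm{\dot S^1(I_j)}{u}$ since $(p_0,\tilde q_0)$ corresponds to an $\dot H^1$-endpoint, so I would instead keep the nonlinearity bounded by $\norm{\dot S^1(I_j)}{u}^{k+1}$ with one factor being genuinely small.

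The key point making the bootstrap close is that on $I_j$ we can further subdivide into $O(1)$ (after choosing constants, or $O(\eta^{-c})$ but harmlessly so — actually $O(1)$ many since \eqref{partitioned} already fixes the size) many pieces — no, cleaner: I would instead note that the hypothesis \eqref{partitioned} controls only the $L_{t,x}^{2(n+2)/(n-2)}$ norm, which is the $\dot S^0$-type scattering norm at regularity $0$, so to feed it into the $\dot S^1$ estimate I need to interpolate or use the chain-rule structure so that at least one factor of the nonlinearity is measured in $L_{t,x}^{2(n+2)/(n-2)}$ rather than in $\dot S^1$. Concretely, I would bound $\norm{L_t^{p_0}(I_j,L_x^{\tilde q_0,2})}{u} \le \norm{L_{t,x}^{2(n+2)/(n-2)}(I_j)}{u}^{\theta}\norm{\dot S^1(I_j)}{u}^{1-\theta}$ by a suitable Hölder/interpolation (using that $L_x^{\tilde q_0,2}$ sits between $L_x^{\tilde q_0,\tilde q_0}=L_x^{2(n+2)/(n-2)}$ and spaces controlled by $\dot S^1$), so the full nonlinearity estimate reads $\lesssim \eta^{c}\norm{\dot S^1(I_j)}{u}^{k+1}$ (possibly with $\norm{\dot S^1(I_j)}{u}$ replaced by $\max(\norm{\dot S^1(I_j)}{u},1)$ to handle $k<1$). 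Combining with the linear term bounded by $C\norm{\dot H_x^1}{u(t_0)}\le CE^{C}$ via coercivity, I get $\norm{\dot S^1(I_j)}{u}\le CE^C + C\eta^c\norm{\dot S^1(I_j)}{u}^{k+1}$.

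Finally, I would run the standard continuity argument: the function $t\mapsto \norm{\dot S^1([t_j^-,t])}{u}$ (where $I_j=[t_j^-,t_j^+]$) is continuous, equals $O(1)$ near $t=t_j^-$ by the local theory, and the above inequality shows it cannot cross the threshold $2CE^C$ as long as $C\eta^c(2CE^C)^{k+1}\le CE^C$, i.e. as long as $\eta^c \le c E^{-Ck}$ — which holds by our choice $\eta = 1/(C_3E^{C_2})$ once $C_2$ (and $C_3$) are taken large enough relative to $C,k,n,b$. Hence $\norm{\dot S^1(I_j)}{u}\le CE^C$ uniformly in $j$. The main obstacle I anticipate is purely bookkeeping: arranging the interpolation so that a genuinely small power of $\eta$ multiplies a controlled (not a priori infinite) quantity, i.e. making sure the bootstrap quantity $\norm{\dot S^1(I_j)}{u}$ is known finite to begin with — this is where the local well-posedness theory of Theorem \ref{thrm:2.5-well-posedness}(ii), which already places $u$ in every Strichartz space on compact subintervals, is invoked so that the continuity method is legitimate. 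A secondary subtlety is the case $k<1$, where $x\mapsto x^{k+1}$ is concave and one must use $\max(\norm{\dot S^1(I_j)}{u},1)$ or split into the regimes $\norm{\dot S^1(I_j)}{u}\lessgtr 1$; this is routine but must be stated.
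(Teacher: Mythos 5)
Your overall strategy (Duhamel from a point $t_0 \in I_j$, Lorentz-type Strichartz, H\"older-type nonlinearity estimate, bootstrap) is the same as the paper's, but you have talked yourself into an unnecessary and, as written, invalid complication. The paper obtains a \emph{linear} bootstrap inequality
\[
\norm{\dot S^1(I_j)}{u} \le CE^C + C\,\eta^{\frac{2-b}{n+2}}\,\norm{\dot S^1(I_j)}{u},
\]
which closes immediately once $\eta$ is small (an absolute smallness, not one tuned to the bootstrap quantity), needing only finiteness of $\norm{\dot S^1(I_j)}{u}$ from the local theory --- no continuity method, no $\max(\cdot,1)$ fix for $k<1$. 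The key point you missed is in the H\"older/Lorentz step: when you split the nonlinearity as (roughly) $|x|^{-b}\,|u|^k\,\nabla u$ targeting $L_x^{q,2}$ with $q = 2n/(n+2)$, the constraint on the \emph{second} Lorentz indices is $\tfrac12 = \tfrac{k}{s_1} + \tfrac12$, which \emph{forces} $s_1 = \infty$. So the $k$ factors of $u$ land in $L_x^{\tilde q_0,\infty}$, and since $\tilde q_0 > 2$, the nesting of Lorentz spaces gives $\norm{L_x^{\tilde q_0,\infty}}{u} \le C\norm{L_x^{\tilde q_0}}{u}$; this is pure Lebesgue, i.e.\ exactly what \eqref{partitioned} controls. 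That is how the factor $\eta^{(2-b)/(n+2)}$ appears multiplied by only one power of $\norm{\dot S^1(I_j)}{u}$.

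Your proposed interpolation
\[
\norm{L_t^{p_0}(I_j,L_x^{\tilde q_0,2})}{u} \le \norm{L_{t,x}^{2(n+2)/(n-2)}(I_j)}{u}^{\theta}\,\norm{\dot S^1(I_j)}{u}^{1-\theta}
\]
with $\theta>0$ does not hold. The second Lorentz index on the left is $2$, while the Lebesgue partition norm has index $\tilde q_0 = p_0 > 2$ and the only thing $\dot S^1$ gives (after the Lorentz--Sobolev embedding with the same first index) is again index $2$. Log-convexity of Lorentz norms with fixed first index requires $\tfrac12 = \tfrac{\theta}{\tilde q_0} + \tfrac{1-\theta}{2}$, forcing $\theta = 0$; and real interpolation with the first index fixed does not change the first index, so there is no way to manufacture the smaller second index $2$ from the larger one $\tilde q_0$ in the direction you need. (In words: $L^{\tilde q_0, 2}$ is a \emph{smaller} space than $L^{\tilde q_0}$, so it cannot be ``between'' $L^{\tilde q_0}$ and spaces that are again $L^{\cdot,2}$.) Thus, as written, your argument has a genuine gap. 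The fix is not to interpolate at all: use the forced exponent $s_1 = \infty$ in O'Neil's inequality and the nesting $L^{\tilde q_0} \hookrightarrow L^{\tilde q_0,\infty}$, which makes the nonlinearity bound linear in $\norm{\dot S^1(I_j)}{u}$ and renders the continuity argument superfluous.
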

\begin{proof}
Fix $t_0\in I_j$. For $\eta$ small enough, by the nonlinearity estimate \eqref{nonl-pt-est}, we see that
\begin{align*}
\norm{\dot{S}^1(I_j)} {u}
&\leq	\norm{\dot{S}^1(I_j)} {e^{it\Delta}u(t_0)}
+ \norm{\dot{S}^1(I_j)}
{\int_{t_0}^{t} e^{i(t-s)\Delta}(|x|^{-b}|u|^ku)(s) \,ds} \\
&\leq	C \norm{\dot{H}_x^1}{u(t_0)}
+ C \norm{L_t^{\frac{2(n+2)}{n-2}}(I_j, L_x^{\frac{2(n+2)}{n-2}})} {u}^{\frac{2(2-b)}{n-2}}
\norm{L_t^{\frac{2(n+2)}{n-2+2b}}(I_j, \dot{W}_x^{1;\frac{2n(n+2)}{n^2+4-4b},2})} {u} \\
&\leq	C E^C + C \eta^{\frac{2-b}{n+2}} \norm{\dot{S}^1(I_j)} {u}
\end{align*}
and the result follows from simple bootstrapping.
\end{proof}

There is another necessary yet technical bootstrapping lemma. Given a general time interval $[t_1, t_2] \subset [T_-, T_+]$, if the data size is comparable to $\eta$ in $L_{t,x}^{2(n+2)/(n-2)}$ norm, the lemma ensures a lower bound, polynomially depending on $\eta$, for the size of the linearly evolved approximants at both endpoints $t_1$ and $t_2$.

\begin{lem} \label{lem:3.2}
Let $[t_1, t_2] \subset [T_-, T_+]$ be an interval such that
\begin{equation} \label{lem3.2-assume}
\frac{\eta}{2}
\leq \int_{t_1}^{t_2}\int_{\mathbb{R}^n} |u|^{\frac{2(n+2)}{n-2}} dx\,dt
\leq 2\eta.
\end{equation}
If we define $u_m(t,x) = e^{i(t-t_m)\Delta}u(t_m)$ ($m=1,2$), then for both $m=1,2$, we have
\[
\int_{t_1}^{t_2}\int_{\mathbb{R}^n} |u_m|^{\frac{2(n+2)}{n-2}} dx\,dt \geq c\eta^C.
\]
\end{lem}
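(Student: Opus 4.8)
The plan is to estimate the Duhamel difference $u - u_m = -i\mu\int_{t_m}^t e^{i(t-s)\Delta}(|x|^{-b}|u|^k u)(s)\,ds$ and then argue by contradiction: if the left-hand side were smaller than $c\eta^C$ for a suitable large exponent, the nonlinear term over $[t_1,t_2]$ would be controlled by a high power of $\eta$, forcing $u$ itself to be much smaller than $\eta^{1/?}$ in the $L_{t,x}^{2(n+2)/(n-2)}$ norm, contradicting the lower bound $\eta/2$ in \eqref{lem3.2-assume}. First I would record that on $[t_1,t_2]$ the solution obeys $\norm{\dot{S}^1([t_1,t_2])}{u}\leq CE^C$ by Lemma \ref{lem:3.1} (the hypothesis \eqref{lem3.2-assume} is exactly the hypothesis of that lemma up to a harmless factor of $2$), and that by interpolation between $L_t^\infty \dot{H}_x^1 \hookrightarrow L_t^\infty L_x^{2n/(n-2)}$-type control and the $\dot{S}^1$ bound, together with \eqref{lem3.2-assume}, the various mixed norms of $u$ appearing in the nonlinearity estimate are all bounded by $CE^C$, with at least one genuine factor carrying a positive power of $\eta$ coming from $\norm{L_{t,x}^{2(n+2)/(n-2)}([t_1,t_2])}{u}\leq (2\eta)^{(n-2)/(2(n+2))}$.

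The core estimate is then exactly the nonlinearity estimate \eqref{nonl-pt-est} applied on $[t_1,t_2]$ with base point $t_m$: for any $L^2$-admissible $(p,q)$,
\[
\norm{L_t^p([t_1,t_2],\dot W_x^{1;q,2})}{u-u_m}
\leq C\,\norm{L_t^{p_0}([t_1,t_2],L_x^{\tilde q_0,2})}{u}^{k}\,
\norm{L_t^{p_b}([t_1,t_2],\dot W_x^{1;q_b,2})}{u},
\]
and likewise at the $\dot S^0$ level one controls $\norm{L_t^p([t_1,t_2],L_x^{q,2})}{u-u_m}$ by $C\,\norm{L_t^{p_0}L_x^{\tilde q_0,2}}{u}^{k}\,\norm{L_t^{p_b}L_x^{q_b,2}}{u}$. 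Using Sobolev embedding $\dot W_x^{1;q_b,2}\hookrightarrow L_x^{\tilde q_b,2}$, the $\dot S^1$ bound from Lemma \ref{lem:3.1}, and the fact that $\norm{L_t^{p_0}L_x^{\tilde q_0,2}}{u}$ is comparable (up to $E$-powers) to $\norm{L_{t,x}^{2(n+2)/(n-2)}}{u}\lesssim \eta^{c}$ via the admissibility relations recorded in Subsection \ref{Subsection:2.2}, I get $\norm{\dot S^1([t_1,t_2])}{u-u_m}\leq CE^C\eta^{c}$ for an explicit small $c>0$ depending only on $n,b$ (essentially $c = (2-b)/(n+2)$, the same gain as in Lemma \ref{lem:3.1}). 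In particular, choosing the admissible pair $(p_0,q_0)$ and using Sobolev embedding once more, $\norm{L_{t,x}^{2(n+2)/(n-2)}([t_1,t_2])}{u - u_m}\leq CE^C\eta^{c}$.

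Now I close the argument. By the triangle inequality,
\[
\norm{L_{t,x}^{2(n+2)/(n-2)}([t_1,t_2])}{u_m}
\geq \norm{L_{t,x}^{2(n+2)/(n-2)}([t_1,t_2])}{u} - \norm{L_{t,x}^{2(n+2)/(n-2)}([t_1,t_2])}{u - u_m}
\geq (\eta/2)^{(n-2)/(2(n+2))} - CE^C\eta^{c}.
\]
Recalling $\eta = 1/(C_3 E^{C_2})$ with $C^{C_2}\ll C_3$, and since $c = (2-b)/(n+2) < 1/2 \cdot (n-2)/(n+2) \cdot \frac{n+2}{?}$... more precisely since $(n-2)/(2(n+2)) < c$ fails in general, the cleanest route is: pick $C_2$ large enough (relative to the fixed exponents $n,b$) that $CE^C\eta^{c}\leq \tfrac12(\eta/2)^{(n-2)/(2(n+2))}$ whenever $E\geq c_0$; this is possible precisely because the left side carries strictly more negative powers of $E$ than the right side once $C_2$ dominates $C$, together with the freedom in $C_3$. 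Hence the right-hand side is $\geq \tfrac12(\eta/2)^{(n-2)/(2(n+2))}\geq c\eta^{C}$, which is the claim with, say, $C = (n-2)/(2(n+2)) + 1$. The main obstacle is purely bookkeeping: verifying that the power of $\eta$ gained in the difference estimate (driven by the Lorentz–Hölder splitting of $|x|^{-b}|u|^k u$ and the admissibility identities of Subsection \ref{Subsection:2.2}) is a \emph{positive} power independent of $E$, so that a single choice of the hierarchy $C^{C_2}\ll C_3$ beats the $E$-powers uniformly in $[T_-,T_+]$; once that positivity is pinned down, everything else is the standard Strichartz/Duhamel continuity estimate already used in Theorem \ref{thrm:2.5-well-posedness} and Lemma \ref{lem:3.1}.
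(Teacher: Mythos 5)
Your Duhamel--Strichartz argument reproduces exactly the paper's short proof of Lemma~\ref{lem:3.2} for the lower-dimensional range $n<6-2b$: Duhamel from $t_m$, the nonlinearity estimate \eqref{nonl-pt-est}, the $\dot S^1$ bound from Lemma~\ref{lem:3.1}, Sobolev embedding into $L_{t,x}^{2(n+2)/(n-2)}$, and the triangle inequality against the lower bound $(\eta/2)^{(n-2)/(2(n+2))}$. That much is correct and matches the paper word for word. The problem is the paragraph where you try to close the triangle inequality in the case $c=(2-b)/(n+2)\leq (n-2)/(2(n+2))$, i.e.\ $n\geq 6-2b$.

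The fix you propose there does not work, and cannot work, no matter how $C_2,C_3$ are chosen. The comparison you need is between two powers of the \emph{same} quantity $\eta<1$: you want $CE^C\eta^{c}\leq \tfrac12(\eta/2)^{(n-2)/(2(n+2))}$. Writing $\eta=1/(C_3E^{C_2})$, the left side equals $C C_3^{-c}E^{\,C-cC_2}$ while the right is $\sim C_3^{-(n-2)/(2(n+2))}E^{-C_2(n-2)/(2(n+2))}$. Rearranging, the requirement becomes
\[
C_3^{\frac{n-2}{2(n+2)}-c}\;\leq\; \tfrac{1}{2C}\, 2^{-\frac{n-2}{2(n+2)}}\,E^{\,-C+C_2\left(c-\frac{n-2}{2(n+2)}\right)}.
\]
When $c<\frac{n-2}{2(n+2)}$ the exponent of $E$ on the right is \emph{negative} for every choice of $C_2$, so the right side tends to $0$ as $E\to\infty$, while the left side is a fixed positive number (in fact growing in $C_3$). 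Thus the inequality fails for all sufficiently large $E$ regardless of $C_2,C_3$. Your remark that ``the left side carries strictly more negative powers of $E$'' is true only when $c>\frac{n-2}{2(n+2)}$, which is precisely the lower-dimensional restriction $n<6-2b$. You cannot buy back a deficient $\eta$-exponent by tuning the $E$-hierarchy, because $\eta$ is itself a power of $E$ and both sides scale together.

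This is exactly why the paper does not prove Lemma~\ref{lem:3.2} this way in higher dimensions. For $n\geq 6-2b$ it passes to the differentiated equation \eqref{inls-1st-deriv}, writes $\nabla u=\nabla u_1+A\nabla u$ with the contraction $A$ on $\dot S^0$, truncates the Neumann series, and then uses the interpolation Lemma~\ref{lem:4.1} (an $h$-difference/averaging argument in its own right) to convert smallness of $\|\nabla u\|_{X}$ into smallness of $\|\nabla u_1\|_{X}$ with a controlled loss $\eta^{\theta^{-M}}$. That iterated scheme is the genuine extra idea; the direct Duhamel argument you wrote down is structurally incapable of reaching $n\geq 6-2b$, and the ``bookkeeping'' you flagged at the end is not bookkeeping but the real obstruction.
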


Unfortunately, this lemma is where our arguments no longer retain the ideal parameter range $n\geq 3$ and $0<b<\min(2,n/2)$. We first provide a straightforward proof that holds for every lower-dimensional case $n < 6-2b$. The remaining higher-dimensional cases $n \geq 6-2b$ are dealt with in Section \ref{Section:4}, which follows the idea of \cite[Appendix]{Tao05}. Although not all higher-dimensional cases are coverable, most still are.

\begin{proof}[Proof of Lemma \ref{lem:3.2} for lower dimensions]~

Duhamel's formula on $u - u_m$ and the nonlinearity estimate \eqref{nonl-pt-est} from time $t_m$ yields
\[
\norm{\dot{S}^1([t_1,t_2])}{u - u_m} \leq CE^C \eta^{\frac{2-b}{n+2}}
\]
and hence by the Sobolev embedding $\dot{W}_x^{1;\frac{2n(n+2)}{n^2+4},2}(\mathbb{R}^n) \hookrightarrow L_x^\frac{2(n+2)}{n-2}(\mathbb{R}^n)$, we have
\[
\norm{L_t^{\frac{2(n+2)}{n-2}}([t_1,t_2];L_x^{\frac{2(n+2)}{n-2}})} {u - u_m}
\leq CE^C \eta^{\frac{2-b}{n+2}}.
\]
The result is then obtained in view of \eqref{partitioned} and the triangle inequality. Note that this relatively standard bootstrapping approach is valid only when
\[
\frac{n-2}{2(n+2)} < \frac{2-b}{n+2},
\]
which coincides with the lower-dimensional condition $n<6-2b$.
\end{proof}

Let us consider two endpoint linear evolutions $u_-,\, u_+: [T_-, T_+]\times\mathbb{R}^n \to \mathbb{C}$ defined by $u_\pm(t) = e^{i(t-T_\pm)\Delta}u(T_\pm)$, which supposedly approximate the long-time behavior of the solution $u$.
We categorize the time subintervals $I_j$ into two groups: $I_j$ is said to be either a \emph{bad} (or \emph{exceptional}) subinterval if
\[
\int_{I_j}\int_{\mathbb{R}^n} |u_\pm|^{\frac{2(n+2)}{n-2}} dx\,dt
> \eta^{C_1}
\]
for at least one of the signs $\pm$, or a \emph{good} (or \emph{unexceptional}) subinterval otherwise. We note that, from the simple Strichartz estimate
\[
\int_{T_-}^{T_+}\int_{\mathbb{R}^n} |u_\pm|^{\frac{2(n+2)}{n-2}} dx\,dt
\leq	CE^C,
\]
there are at most $CE^C\eta^{-C_1}$ exceptional subintervals, meaning we only have to focus on estimating the number of good subintervals.

We now look at a proposition describing a lower bound of the solution's local mass in a good subinterval. It is used to establish a certain relationship for the lengths of good subintervals, which later helps the estimation of $J$. We defer its proof to Subsection \ref{Subsection:3.2}.

\begin{prop} \label{prop:3.3-locmass}
Let $I_j$ be a good subinterval. Then there exists $x_j\in\mathbb{R}^n$ such that
\[
\mathrm{Mass}(u(t); B(x_j, C\eta^{-CC_0}|I_j|^{1/2})) \geq c\eta^{CC_0} |I_j|^{1/2}
\]
for every $t\in I_j$.
\end{prop}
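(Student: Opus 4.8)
The plan is to follow \citep{Tao05}. First I would reduce to a single time slice by means of the approximate conservation law of local masses \eqref{2.lm-ii}: if we exhibit a point $x_j$, a time $t_{*}\in I_j$, and a radius $R=C\eta^{-CC_0}|I_j|^{1/2}$ with $\mathrm{Mass}(u(t_{*});B(x_j,R))\geq 2c\eta^{CC_0}|I_j|^{1/2}$, then this bound propagates across $I_j$: since $|\partial_t\,\mathrm{Mass}(u(t);B(x_j,R))|\leq CE^{1/2}/R$ and $|t-t_{*}|\leq|I_j|$, the total variation is at most $CE^{1/2}\eta^{CC_0}|I_j|^{1/2}$, which is dominated by $c\eta^{CC_0}|I_j|^{1/2}$ once the exponent of $\eta$ in $R$ is chosen sufficiently larger than the one in the mass bound, the factor $E^{1/2}$ being absorbed through $\eta=(C_3E^{C_2})^{-1}$. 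This already fixes part of the ordering among the $C_k$.

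For the single-time statement, I would pigeonhole, from the lower bound in \eqref{partitioned}, a time $t_{*}\in I_j$ with $\int_{\mathbb{R}^n}|u(t_{*},x)|^{p_0}\,dx\gtrsim\eta/|I_j|$, while coercivity gives $\norm{\dot{H}_x^1}{u(t_{*})}\leq CE^{1/2}$. The radial Sobolev inequality $|u(t_{*},x)|\lesssim|x|^{-(n-2)/2}\norm{\dot{H}_x^1}{u(t_{*})}$ and the criticality identity $n-(n-2)p_0/2=-2$ give $\int_{|x|>R}|u(t_{*})|^{p_0}\,dx\lesssim E^{p_0/2}R^{-2}$, so choosing $R=C\eta^{-CC_0}|I_j|^{1/2}$ (after absorbing $E^{p_0/4}\eta^{-1/2}$) makes the outer part at most half of the total; hence $\int_{|x|\leq R}|u(t_{*})|^{p_0}\,dx\gtrsim\eta/|I_j|$ and one may take $x_j=0$. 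It then remains to convert this lower bound on the $L^{p_0}_x$-density inside $B(0,R)$ into a lower bound on $\mathrm{Mass}(u(t_{*});B(0,R))$, and this is the decisive step: following the concentration argument of \citep{Bourgain99} and \cite[Section~3]{Tao05}, one Littlewood--Paley decomposes $u(t_{*})$, uses Bernstein's inequalities and the dispersive estimate along the evolution $e^{i(t-t_{*})\Delta}u(t_{*})$---for which $\norm{\dot{S}^1(I_j)}{u-e^{i(t-t_{*})\Delta}u(t_{*})}\leq CE^C\eta^{(2-b)/(n+2)}$ by Duhamel, the nonlinearity estimate \eqref{nonl-pt-est}, and Lemma \ref{lem:3.1}---over the whole slab $I_j\times\mathbb{R}^n$, and invokes both inequalities in \eqref{partitioned} together with Lemma \ref{lem:3.1} to constrain the concentration to a spatial scale comparable to $|I_j|^{1/2}$ up to powers of $\eta$ and $E$; the parabolic scaling identities (such as $(n+2)/((n-2)p_0)=\tfrac12$) then yield $\mathrm{Mass}(u(t_{*});B(0,R))\gtrsim\eta^{CC_0}|I_j|^{1/2}$, completing the single-time step.

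The main obstacle is this last step, precisely because $\dot{H}^1$ regularity falls a half-derivative short of controlling the $L^{p_0}_x$-norm at a fixed time, so the $L^2$-mass cannot be interpolated out of the $L^{p_0}_x$-density directly; one must genuinely exploit the dispersive spreading of the evolution over the full spacetime slab---in effect a spacetime refined Strichartz inequality---and it is this that ties the concentration scale to $|I_j|^{1/2}$. Tracking the numerous powers of $\eta$ and $E$ through the estimates, and absorbing every $E$-power into an $\eta$-power via $\eta=(C_3E^{C_2})^{-1}$, is what forces the orderings $C_0\ll C_1\ll C_4/C_2$ and $C^{C_2}\ll C_3$ among the constants; the proposition's proof itself introduces no restriction on $(n,b)$ beyond the one already present through its use of Lemma \ref{lem:3.2}.
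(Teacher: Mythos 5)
Your reduction to a single time slice via \eqref{2.lm-ii} is fine, and the outer radial Sobolev step (pushing the $L^{p_0}_x$ density inside a ball of radius $R=C\eta^{-CC_0}|I_j|^{1/2}$) is arithmetically consistent, although it invokes radiality prematurely: the paper proves Proposition~\ref{prop:3.3-locmass} without radiality and only sets $x_j=0$ in Corollary~\ref{cor:3.4-locmass-at-0}. The real problem is what you yourself flag as ``the decisive step.'' Knowing $\int_{|x|\leq R}|u(t_*)|^{p_0}dx\gtrsim\eta/|I_j|$ together with $\|u(t_*)\|_{\dot H^1_x}\leq CE^{1/2}$ does not yield \emph{any} lower bound on the local $L^2$ mass at that time, at any radius comparable to $R$, and no amount of Littlewood--Paley, Bernstein, or Strichartz refinement on $u(t_*)$ alone can fix this: take $u(t_*)(x)=N^{(n-2)/2}\psi(Nx)$ for a fixed bump $\psi$ and $N\to\infty$. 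Then $\|u(t_*)\|_{\dot H^1}$ stays bounded, $\int|u(t_*)|^{p_0}dx=N^2\int|\psi|^{p_0}dx\to\infty$, yet $\int|u(t_*)|^2dx=N^{-2}\int|\psi|^2dx\to 0$. The scaling of $L^{p_0}$ sits at $\dot H^{2n/(n+2)}$, strictly above $\dot H^1$, which is precisely what makes high-frequency bubbling invisible to your fixed-time argument.

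The paper, following Tao, does \emph{not} try to extract the mass concentration from $u(t_*)$. It instead Duhamel-decomposes $u(t_*)$ into the free evolution $u_-$, a recent-past piece, and the far-past integral $v(t,x)=\int_{T_-}^{t_*-\eta^{C_0}}e^{i(t-s)\Delta}(|x|^{-b}|u|^k u)(s)\,ds$, eliminates the first two by goodness of $I_j$ and a pigeonhole \eqref{pigeon-1}--\eqref{pigeon-2}, and deduces a lower bound \eqref{v-lo-bd} on $v$ in $L^{p_0}_{t,x}$. The crucial input is then Lemma~\ref{lem:3.8-v-holder-est}: because $v$ is a linear evolution from a time at distance $\geq\eta^{C_0}$, it enjoys a uniform $h$-difference (H\"older) estimate in $x$ with only $\eta^{-CC_0}$ loss. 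That H\"older regularity---which $u(t_*)$ itself does not have, as the bubble example shows---is exactly what rules out the bad concentration: averaging $v$ at scale $r=\eta^{C'C_0}$ preserves the $L^{p_0}$ lower bound, an $L^{2n/(n-2)}_{t,x}$ upper bound follows from coercivity, and interpolation yields a pointwise $L^\infty_{t,x}$ lower bound, hence a local mass lower bound for $v$; pushing it back to $u$ via \eqref{pigeon-1} and \eqref{2.lm-ii} gives the proposition. Your outline never constructs $v$ and never uses any regularity beyond $\dot H^1$ at a fixed time, so the key step is genuinely missing, not just unwritten.
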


The radiality condition becomes essential afterward.
Adjusting the absolute constants $C$ (but not $C_0$ and $\eta$) in the proposition if necessary, we can reduce the cases to $x_j = 0$ for every good subinterval $I_j$ by the radiality condition.
The idea is that if $|x_j|$ is not adequately bounded, then the data becomes too large in $L_x^{\frac{2n}{n-2}}(\mathbb{R}^n)$ on a disjoint union of the ball $B(x_j, C\eta^{-CC_0}|I_j|^{1/2})$ and its selected rotations, violating the energy conservation. See \cite[Corollary~3.5]{Tao05} for the proof.

\begin{cor} \label{cor:3.4-locmass-at-0}
Let $I_j$ be a good subinterval and $u$ be spherically symmetric. Then
\[
\mathrm{Mass}(u(t); B(0, C\eta^{-CC_0}|I_j|^{1/2})) \geq c\eta^{CC_0} |I_j|^{1/2}
\]
for every $t\in I_j$.
\end{cor}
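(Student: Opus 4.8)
The plan is to follow \cite[Corollary~3.5]{Tao05}, with the spherical symmetry of $u$ supplying the decisive input. Write $R_j := C\eta^{-CC_0}|I_j|^{1/2}$ for the radius and $x_j$ for the center furnished by Proposition \ref{prop:3.3-locmass}; since $\chi$ is supported in $\{|x|\le 2\}$ and equals $1$ on $\{|x|\le 1\}$, one has $\mathrm{Mass}(u(t);B(y,R))^2\le\int_{B(y,2R)}|u(t)|^2\,dx$ for any $y,R$, and $\int_{|x|\le \rho}|u(t)|^2\,dx\le\mathrm{Mass}(u(t);B(0,\rho))^2$ for any $\rho$. The argument splits on the size of $|x_j|$. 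If $|x_j|\le 10R_j$, then $B(x_j,2R_j)\subset B(0,12R_j)\subset B(0,C\eta^{-CC_0}|I_j|^{1/2})$ after harmlessly enlarging the constant in front of $\eta^{-CC_0}$, and chaining the two inequalities above gives
\[
\mathrm{Mass}\big(u(t);B(0,C\eta^{-CC_0}|I_j|^{1/2})\big)
\ge \mathrm{Mass}(u(t);B(x_j,R_j)) \ge c\eta^{CC_0}|I_j|^{1/2},
\]
which is the claim. So assume henceforth $|x_j|>10R_j$.

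First I would convert the local-mass lower bound into an $L_x^{\frac{2n}{n-2}}$ lower bound on the bubble: H\"older's inequality on $B(x_j,2R_j)$ gives $\mathrm{Mass}(u(t);B(x_j,R_j)) \le C\,\norm{L_x^{\frac{2n}{n-2}}(B(x_j,2R_j))}{u(t)}\,R_j$, so with Proposition \ref{prop:3.3-locmass} and $R_j=C\eta^{-CC_0}|I_j|^{1/2}$ we get $\norm{L_x^{\frac{2n}{n-2}}(B(x_j,2R_j))}{u(t)} \ge c\eta^{CC_0}|I_j|^{1/2}/R_j \ge c\eta^{C}$ for every $t\in I_j$, a positive power of $\eta$. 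Now comes the packing step, where radiality enters: since $|x_j|>10R_j$, an elementary volume count on the sphere $\{|x|=|x_j|\}$ produces rotations $\rho_1,\dots,\rho_N\in SO(n)$ with $N\ge c(|x_j|/R_j)^{n-1}$ for which the balls $B(\rho_i x_j,2R_j)$ are pairwise disjoint, and because $u(t,\cdot)$ is spherically symmetric, $\norm{L_x^{\frac{2n}{n-2}}(B(\rho_i x_j,2R_j))}{u(t)}=\norm{L_x^{\frac{2n}{n-2}}(B(x_j,2R_j))}{u(t)}\ge c\eta^C$ for each $i$. Summing the disjoint contributions and invoking the Sobolev embedding $\dot{H}_x^1(\mathbb{R}^n)\hookrightarrow L_x^{\frac{2n}{n-2}}(\mathbb{R}^n)$ together with coercivity $\norm{\dot{H}_x^1(\mathbb{R}^n)}{u(t)}\le C\max(E,E^{C'})\le CE^C$, we obtain
\[
c\Big(\frac{|x_j|}{R_j}\Big)^{n-1}\eta^{C}
\le \sum_{i=1}^N \norm{L_x^{\frac{2n}{n-2}}(B(\rho_i x_j,2R_j))}{u(t)}^{\frac{2n}{n-2}}
\le \norm{L_x^{\frac{2n}{n-2}}(\mathbb{R}^n)}{u(t)}^{\frac{2n}{n-2}}
\le CE^C.
\]
Since $\eta=1/(C_3E^{C_2})$ lets us trade each power of $E$ for a power of $\eta^{-1}$ (using $1/C_2\le 1$), this rearranges to $|x_j|\le C\eta^{-C}R_j\le C\eta^{-CC_0}|I_j|^{1/2}$ after enlarging the loose constant in the exponent. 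Hence $B(x_j,2R_j)\subset B(0,C\eta^{-CC_0}|I_j|^{1/2})$, and exactly as in the easy case this ball carries local mass at least $c\eta^{CC_0}|I_j|^{1/2}$, which is the assertion.

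The genuinely new ingredients are the sphere-packing count $N\gtrsim(|x_j|/R_j)^{n-1}$ and the rotation invariance of $L_x^p$ norms of radial functions, both elementary; the main obstacle is purely bookkeeping — one must verify that absorbing the extra powers of $\eta$ and $E$ into ``$C$'' is legitimate, i.e.\ that only $C_0$ (not the loose constants multiplying it, nor the non-distinguished factor in the exponent $CC_0$) has to be held fixed, so that the conclusion retains the exact form of Proposition \ref{prop:3.3-locmass} and stays compatible with the later ordering $C_0\ll C_1\ll C_4/C_2$ and $C^{C_2}\ll C_3$.
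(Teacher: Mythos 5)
Your proposal is correct and follows exactly the approach the paper points to — it is the argument of Tao's Corollary~3.5 adapted verbatim: convert the mass lower bound on the bubble at $x_j$ into an $L_x^{2n/(n-2)}$ lower bound on $B(x_j,2R_j)$, pack $\gtrsim(|x_j|/R_j)^{n-1}$ disjoint rotated copies on the sphere of radius $|x_j|$, exploit the rotation-invariance of radial $L^p$ norms, and contradict the coercivity bound $\Vert u(t)\Vert_{L_x^{2n/(n-2)}}\leq CE^C$ unless $|x_j|\lesssim\eta^{-CC_0}|I_j|^{1/2}$, which then reduces to the easy case $x_j=0$. The paper does not write this proof out but cites Tao's Corollary~3.5, whose argument is precisely yours; your bookkeeping observations about absorbing powers of $\eta$ and $E$ into the unlabelled constants are also in order.
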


Combining Corollary \ref{cor:3.4-locmass-at-0} and Lemma \ref{lem:2.7-inls-morawetz}, we deduce the following relation for the lengths of good subintervals, similar to \cite[Corollary~3.6]{Tao05}. The proof is provided separately as the Morawetz inequality for INLS differs slightly from the case of standard NLS.

\begin{cor} \label{cor:3.5}
Let the solution $u$ be spherically symmetric. Then for any interval $I\subset[T_-, T_+]$, we have
\[
\sum_{\substack{1\leq j\leq J\\[.25ex] I_j\subset I \mathrm{\:good}}} |I_j|^{1/2}
\leq C \eta^{-CC_0} |I|^{1/2}.
\]
\end{cor}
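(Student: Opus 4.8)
The plan is to control each good subinterval's length by its contribution to the spatially-localized Morawetz inequality (Lemma~\ref{lem:2.7-inls-morawetz}), using Corollary~\ref{cor:3.4-locmass-at-0} to produce a definite amount of Morawetz ``mass'' from every good $I_j\subset I$. Fix an interval $I\subset[T_-,T_+]$ and let $I_j\subset I$ be good. By Corollary~\ref{cor:3.4-locmass-at-0}, writing $R_j := C\eta^{-CC_0}|I_j|^{1/2}$, we have $\mathrm{Mass}(u(t);B(0,R_j))\geq c\eta^{CC_0}|I_j|^{1/2}$ for all $t\in I_j$; that is, $\int_{|x|\leq R_j}|u(t,x)|^2\,dx \geq c\eta^{2CC_0}|I_j|$. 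The first step is to convert this $L^2_x$ lower bound on a ball into a lower bound for the weighted quantity $\int_{|x|\leq R_j}|u(t,x)|^{2(n-b)/(n-2)}|x|^{-(b+1)}\,dx$ that appears on the left of Lemma~\ref{lem:2.7-inls-morawetz} (with $A$ chosen so that $A|I_j|^{1/2}=R_j$, i.e.\ $A=C\eta^{-CC_0}$, which is $\geq 1$ for $\eta$ small). This conversion is done by H\"older's inequality: on the ball $|x|\leq R_j$ one bounds $\int |u|^2 = \int \big(|u|^{2(n-b)/(n-2)}|x|^{-(b+1)}\big)^{\theta}\cdot\big(\text{weight terms}\big)^{1-\theta}$ for the right exponent $\theta$, picking up powers of $R_j$ and an $\dot H^1$-energy factor (via $\|u(t)\|_{L_x^{2n/(n-2)}}\lesssim \|u(t)\|_{\dot H^1_x}\leq CE^C$ from coercivity) for the complementary factor. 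The outcome should be
\[
\int_{|x|\leq R_j}\frac{|u(t,x)|^{2(n-b)/(n-2)}}{|x|^{b+1}}\,dx \;\geq\; c\,\eta^{CC_0}\,\frac{|I_j|^{1/2}}{|I_j|^{1/2}}\cdot(\text{some positive power of }|I_j|^{1/2}\text{ and }\eta)\cdot E^{-C},
\]
i.e.\ after integrating over $t\in I_j$ (length $|I_j|$) a bound of the shape $\int_{I_j}\int_{|x|\leq R_j}\frac{|u|^{2(n-b)/(n-2)}}{|x|^{b+1}} \geq c\eta^{CC_0}E^{-C}|I_j|^{1/2}$. The exact bookkeeping of exponents is where I expect the main friction, but it is purely algebraic: the homogeneities were arranged precisely so that the power of $|I_j|^{1/2}$ on both sides matches the scaling of Morawetz.

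The second step is summation. The good subintervals $I_j\subset I$ are pairwise disjoint and all contained in $I$, and the balls $B(0,R_j)$ are contained in $B(0, C\eta^{-CC_0}|I|^{1/2})$ (since $|I_j|\leq |I|$). Hence the spacetime regions $I_j\times\{|x|\leq R_j\}$ are pairwise disjoint subsets of $I\times\{|x|\leq C\eta^{-CC_0}|I|^{1/2}\}$, so summing the per-interval lower bound and applying Lemma~\ref{lem:2.7-inls-morawetz} with $A=C\eta^{-CC_0}$ on the big slab gives
\[
c\,\eta^{CC_0}E^{-C}\sum_{\substack{I_j\subset I\\ \mathrm{good}}}|I_j|^{1/2}
\;\leq\; \sum_{\substack{I_j\subset I\\ \mathrm{good}}}\int_{I_j}\int_{|x|\leq R_j}\frac{|u|^{2(n-b)/(n-2)}}{|x|^{b+1}}
\;\leq\; \int_{I}\int_{|x|\leq C\eta^{-CC_0}|I|^{1/2}}\frac{|u|^{2(n-b)/(n-2)}}{|x|^{b+1}}
\;\leq\; CE^C\,\eta^{-CC_0}|I|^{1/2}.
\]
Rearranging yields $\sum_{I_j\subset I,\ \mathrm{good}}|I_j|^{1/2}\leq C\eta^{-CC_0}E^C|I|^{1/2}$, and absorbing the harmless $E^C$ into the constant $C$ (adjusting the implicit $C$ multiplying $C_0$ in the exponent, never $C_0$ or $\eta$ itself) gives exactly the claimed bound.

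The main obstacle, as flagged, is the exponent computation in the first step: one must verify that H\"older's inequality with the singular weight $|x|^{-(b+1)}$ on the ball $|x|\leq R_j$ can indeed be balanced against a fixed negative power of $|x|$ and a power of the volume so that (i) the weight stays integrable near the origin for the complementary factor, which forces $b<2$ roughly (consistent with the standing hypothesis $0<b<\min(2,n/2)$), and (ii) the resulting power of $|I_j|^{1/2}$ is exactly $1$, matching the right-hand side of Morawetz and of Corollary~\ref{cor:3.4-locmass-at-0}. This is precisely the computation carried out for standard NLS in \cite[Corollary~3.6]{Tao05}; the only new feature is the extra factor $|x|^{-(b+1)}$ versus $|x|^{-1}$, which changes which power of the radius one extracts but not the structure of the argument, since the Morawetz inequality for INLS was normalized (via scaling, as in Lemma~\ref{lem:2.7-inls-morawetz}) to have the same right-hand side $CE^CA|I|^{1/2}$. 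Everything else — disjointness, nesting of balls, and the final rearrangement — is routine.
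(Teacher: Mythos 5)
Your proposal matches the paper's proof: the paper likewise applies H\"older's inequality to the local mass bound of Corollary~\ref{cor:3.4-locmass-at-0} to get a per-interval lower bound $\int_{I_j}\int_{|x|\leq R}|u|^{2(n-b)/(n-2)}|x|^{-(b+1)}\,dx\,dt \geq c\eta^{CC_0}|I_j|^{1/2}$, sums over the disjoint good $I_j\subset I$, and compares against the upper bound from Lemma~\ref{lem:2.7-inls-morawetz} at radius $R=C\eta^{-CC_0}|I|^{1/2}$. The only cosmetic difference is that the paper's H\"older bounds the complementary factor by the ball measure $(CR_0^n)^{(2-b)/(n-2)}$ rather than invoking the $L^{2n/(n-2)}_x$ coercivity, which avoids the extra $E^{-C}$ you carry; both parameterizations give $|I_j|^{1/2}$ by the scaling you note.
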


\begin{proof}
Let $R_0 = C\eta^{-CC_0}|I_j|^{1/2}$.
Taking H\"{o}lder's inequality on Corollary \ref{cor:3.4-locmass-at-0}, we have the following lower bounds for every good subinterval $I_j$ and $R \geq R_0$. We especially consider the case $R = C\eta^{-CC_0}|I|^{1/2}$.
\begin{align*}
&\quad	\int_{I_j} \int_{|x|\leq R} \frac{|u|^{2(n-b)/(n-2)}}{|x|^{b+1}} dx\, dt \\
&\geq	|I_j| R_0^{-(b+1)} \inf_{t\in I_j} \int_{|x|\leq R_0} |u(t)|^{2(n-b)/(n-2)} dx \\
&\geq	|I_j| R_0^{-(b+1)} (CR_0^n)^{-(2-b)/(n-2)} \inf_{t\in I_j} \Big( \int_{|x|\leq R_0} |u(t)|^{2} dx \Big)^{\frac{n-b}{n-2}} \\
&\geq	c \eta^{CC_0} |I_j|^{1/2}
\end{align*}
Meanwhile, Lemma \ref{lem:2.7-inls-morawetz} yields the upper bound
\[
\int_{I} \int_{|x|\leq R} \frac{|u|^{2(n-b)/(n-2)}}{|x|^{b+1}} dx\, dt \leq CE^C \eta^{-CC_0} |I|^{1/2}.
\]
Combining the upper bound and the summation of lower bounds yields the claim.
\end{proof}

Taking a specific type of $I$ in Corollary \ref{cor:3.5}, a union of adjacent good intervals, leads to the following.
See \cite[Corollary~3.7]{Tao05} for the proof.

\begin{cor} \label{cor:3.6}
Let the solution $u$ be spherically symmetric, and $\displaystyle I = \bigcup_{j=j_1}^{j_2}I_j$ be a union of adjacent good intervals. Then there exists $j_1 \leq j_* \leq j_2$ such that $|I_{j_*}| \geq c\eta^{CC_0}|I|$.
\end{cor}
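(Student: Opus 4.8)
The plan is to read this off directly from Corollary~\ref{cor:3.5}, along the lines of \cite[Corollary~3.7]{Tao05}: the bound on $\sum|I_j|^{1/2}$ established there becomes, after an elementary rearrangement, a lower bound for the largest $|I_j|$. No new analytic ingredient is needed at this point; the Morawetz inequality (Lemma~\ref{lem:2.7-inls-morawetz}) and the local-mass estimate (Corollary~\ref{cor:3.4-locmass-at-0}) have already done all the work in producing Corollary~\ref{cor:3.5}.

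Concretely, I would fix the contiguous union $I=\bigcup_{j=j_1}^{j_2}I_j$ and choose $j_*\in\{j_1,\dots,j_2\}$ maximizing $|I_{j_*}|$, which is possible because the index set is finite. Since the $I_j$ come from the original partition of $[T_-,T_+]$, the intervals $I_{j_1},\dots,I_{j_2}$ are pairwise disjoint with union $I$, so $\sum_{j=j_1}^{j_2}|I_j|=|I|$. Using $|I_j|\le|I_{j_*}|$ for each such $j$, we have $|I_j|^{1/2}\ge|I_j|/|I_{j_*}|^{1/2}$, and summing in $j$ gives
\[
\frac{|I|}{|I_{j_*}|^{1/2}}\ \le\ \sum_{j=j_1}^{j_2}|I_j|^{1/2}.
\]
Because $I\subset[T_-,T_+]$ is an interval whose subintervals $I_j$ are all good, Corollary~\ref{cor:3.5} bounds the right-hand side by $C\eta^{-CC_0}|I|^{1/2}$. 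Combining these, $|I|\,|I_{j_*}|^{-1/2}\le C\eta^{-CC_0}|I|^{1/2}$, and rearranging and squaring yields $|I_{j_*}|\ge c\,\eta^{CC_0}|I|$, a fixed $\eta$-polynomial fraction of $|I|$, which is the asserted lower bound (after relabeling the absolute constants and collecting the $\eta$-powers).

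I do not expect any real obstacle in this step: the only thing requiring attention is the bookkeeping of the absolute constants, namely checking that the $\eta$-exponent produced here has the same shape as the one in Corollary~\ref{cor:3.5}, so that together with $\eta=1/(C_3E^{C_2})$ and the ordering $C_0\ll C_1\ll C_4/C_2$ everything collects into a single factor $c\,\eta^{CC_0}$ with $C$ depending only on $n$ and $b$; and recording that the hypothesis of Corollary~\ref{cor:3.5} is met verbatim since $I$ is by construction a union of good subintervals. This corollary then drives the concentration iteration of Subsection~\ref{Subsection:3.1}: one splits off the large middle interval $I_{j_*}$, recurses on the longer of the two flanking unions of good intervals (whose length is at most $(1-c\,\eta^{CC_0})|I|$), and thereby produces a rapidly concentrating nested sequence of good subintervals to be played against the approximate local mass conservation law in bounding $J$.
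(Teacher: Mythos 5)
Your argument is the standard pigeonhole step and matches the one the paper points to in \cite[Corollary~3.7]{Tao05}: choose $j_*$ maximizing $|I_{j_*}|$, use $\sum_{j=j_1}^{j_2}|I_j|=|I|$ and $|I_j|^{1/2}\ge |I_j|/|I_{j_*}|^{1/2}$ to get $|I|/|I_{j_*}|^{1/2}\le\sum_j|I_j|^{1/2}$, and bound the right side by Corollary~\ref{cor:3.5} (whose hypothesis is satisfied since the good $I_j$ with $j_1\le j\le j_2$ are exactly the partition members contained in $I$). That is the correct and complete proof.

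One point of bookkeeping you shouldn't paper over with ``relabeling constants'': your computation yields $|I_{j_*}|\ge c\,\eta^{2CC_0}|I|$, i.e. a \emph{positive} power of $\eta$, and that is the only form the statement can take (since $I_{j_*}\subset I$ forces $|I_{j_*}|\le|I|$, while $\eta^{-CC_0}>1$ for small $\eta$). The ``$\eta^{-CC_0}$'' printed in Corollary~\ref{cor:3.6} is a sign typo inherited from Corollary~\ref{cor:3.5}, where the negative exponent is correct; the intended bound, as in Tao's original, is $|I_{j_*}|\ge c\,\eta^{CC_0}|I|$. Relabeling $c$ and $C$ cannot flip the sign of the exponent, so it is better to flag the typo explicitly than to absorb it into the constants.
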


With the help of Corollary \ref{cor:3.6}, we can extract a finite sequence of distinct good subintervals that shrink exponentially in length and concentrate to a time $t_*$ with exponentially shrunken distance. See \cite[Proposition 3.8]{Tao05} for the proof.

\begin{prop} \label{cor:3.7-dec-seq}
Let the solution $u$ be spherically symmetric. Then there exist a time $t_* \in [T_-, T_+]$ and distinct good intervals $I_{j_1}, I_{j_2}, \cdots, I_{j_K}$ such that
\begin{itemize}
\item[(i)]  	$K$ satisfies $(c\eta^{C(C_0,C_1)})^K J \leq C\eta^{-CC_0}$,
\item[(ii)] 	$|I_{j_{k+1}}| \leq \dfrac{1}{2} |I_{j_k}|$ for every $1 \leq k \leq K-1$,
\item[(iii)]	$\mathrm{dist}(t_*, I_{j_k}) \leq C\eta^{-C(C_0, C_1)}|I_{j_k}|$ for every $1 \leq k \leq K$.
\end{itemize}
\end{prop}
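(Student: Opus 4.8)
The plan is to adapt the iterative peeling scheme of Tao \cite[Proposition~3.8]{Tao05}. I will produce a nested chain of intervals $\mathcal{I}^{(1)}\supseteq\mathcal{I}^{(2)}\supseteq\cdots$, each one a union of consecutive good subintervals, and at the $k$-th stage pull a ``large'' good subinterval $I_{j_k}$ out of $\mathcal{I}^{(k)}$ using Corollary~\ref{cor:3.6}. If $J\le C\eta^{-CC_0}$ then $J\le CE^C$ and there is nothing to prove, so assume $J$ large. First I dispose of the bad subintervals: there are at most $\sim E^{C}\eta^{-C_1}$ of them, so deleting them splits the good subintervals into at most $\sim E^{C}\eta^{-C_1}$ maximal runs of consecutive good subintervals, and by pigeonhole one such run $\mathcal{I}^{(1)}$ contains $N_1\ge c\eta^{C(C_1)}J$ good subintervals.

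The heart of the proof is the recursion. Suppose $\mathcal{I}^{(k)}$ is an interval consisting of $N_k$ consecutive good subintervals; stop and set $K=k-1$ once $N_k$ falls below a threshold of size $\sim\eta^{-CC_0}$, and otherwise apply Corollary~\ref{cor:3.6} to get a good subinterval $I_{j_k}\subseteq\mathcal{I}^{(k)}$ with $|I_{j_k}|\ge c\eta^{CC_0}|\mathcal{I}^{(k)}|$. I then choose $\mathcal{I}^{(k+1)}\subseteq\mathcal{I}^{(k)}$, disjoint from $I_{j_k}$, with $|\mathcal{I}^{(k+1)}|\le\tfrac12|I_{j_k}|$ yet still containing $N_{k+1}\ge c\eta^{CC_0}N_k$ good subintervals, by a double pigeonhole: at most $C\eta^{-CC_0}$ of the good subintervals of $\mathcal{I}^{(k)}$ have length $>\tfrac14|I_{j_k}|$ (their lengths sum to at most $|\mathcal{I}^{(k)}|$ while $|I_{j_k}|\gtrsim\eta^{CC_0}|\mathcal{I}^{(k)}|$), so after discarding these ``long'' ones the remaining ``short'' good subintervals — at least $cN_k$ of them — lie in at most $C\eta^{-CC_0}$ maximal runs, one of which holds $\gtrsim\eta^{CC_0}N_k$ short subintervals; greedily grouping that run into blocks of total length $\le\tfrac12|I_{j_k}|$ gives at most $C\eta^{-CC_0}$ blocks, since each short subinterval is $\le\tfrac14|I_{j_k}|$ and hence every block but possibly the last has length $>\tfrac14|I_{j_k}|$, and one such block is taken to be $\mathcal{I}^{(k+1)}$.

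The three conclusions then fall out. Distinctness: for $\ell>k$ we have $I_{j_\ell}\subseteq\mathcal{I}^{(\ell)}\subseteq\mathcal{I}^{(k+1)}$, which is disjoint from $I_{j_k}$. Length decay: $|I_{j_{k+1}}|\le|\mathcal{I}^{(k+1)}|\le\tfrac12|I_{j_k}|$. Concentration: pick any $t_*\in\mathcal{I}^{(K)}$; for $k\le K$ both $t_*$ and $I_{j_k}$ lie in the interval $\mathcal{I}^{(k)}$, so $\mathrm{dist}(t_*,I_{j_k})\le|\mathcal{I}^{(k)}|\le C\eta^{-CC_0}|I_{j_k}|$, which is of the stated form. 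Size of $K$: iterating $N_{k+1}\ge c\eta^{CC_0}N_k$ gives $N_{K+1}\ge(c\eta^{CC_0})^K N_1$, and since the recursion terminated $N_{K+1}$ lies below the threshold $\sim\eta^{-CC_0}$; together with $N_1\ge c\eta^{C(C_1)}J$ this is exactly $(c\eta^{C(C_0,C_1)})^K J\le C\eta^{-CC_0}$.

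I expect the recursion step to be the one real obstacle. The naive version — take the larger of the two components of $\mathcal{I}^{(k)}\setminus I_{j_k}$ as $\mathcal{I}^{(k+1)}$ and afterwards extract a geometrically decreasing subsequence of the $I_{j_k}$ — does not work: then $|\mathcal{I}^{(k+1)}|$ shrinks only by a factor $1-c\eta^{CC_0}$ per step, the subsequence extraction costs a factor $\sim\eta^{-CC_0}$, and the resulting $K$ is far too small for the required bound. Building the halving $|\mathcal{I}^{(k+1)}|\le\tfrac12|I_{j_k}|$ directly into the recursion is what makes the length decay automatic, and it is precisely this constraint that forces the short/long bookkeeping above; the rest — keeping the $\eta$-exponents and the constants $C_0,C_1$ straight and fixing the stopping threshold so the pigeonhole counts remain valid — is routine.
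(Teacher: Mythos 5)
Your argument is correct and follows the same iterative peeling scheme as Tao's Proposition~3.8, which is exactly what the paper cites for this result: dispose of bad intervals to get a long run $\mathcal{I}^{(1)}$, then recursively apply Corollary~\ref{cor:3.6} to produce nested intervals $\mathcal{I}^{(k)}\supseteq\mathcal{I}^{(k+1)}$ with the crucial constraint $|\mathcal{I}^{(k+1)}|\le\tfrac12|I_{j_k}|$ built directly into the recursion, so that the geometric length decay and the concentration to $t_*$ follow automatically, while the per-step loss factor $c\eta^{CC_0}$ in the good-interval count gives the first bullet upon termination. Your short/long bookkeeping (discarding the $O(\eta^{-CC_0})$ subintervals longer than $\tfrac14|I_{j_k}|$ so the greedy grouping into blocks of length $\le\tfrac12|I_{j_k}|$ is well-defined, guaranteeing disjointness from $I_{j_k}$, and having identified the failure mode of the naive left/right split) is the right way to make the step rigorous and matches the content of Tao's argument.
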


We now finish the estimation of $J$.
Let $K$, $t_*$, and $I_{j_1}, I_{j_2}, \cdots, I_{j_K}$ be as in Proposition \ref{cor:3.7-dec-seq}. For each $1\leq k\leq K$, Proposition \ref{prop:3.3-locmass} admits a point $x_k \in \mathbb{R}^n$ such that
\[
\mathrm{Mass}\Big(u(t); B\big(x_{j_k}, \frac{C}{2}\eta^{-C'C_0}|I_{j_k}|^{1/2}\big)\Big) \geq c\eta^{C''C_0} |I_{j_k}|^{1/2}
\]
for every $t \in I_{j_k}$, where $C'$ is a much larger constant than $C''$. Applying \eqref{2.lm-ii} and \eqref{2.lm-i}, we obtain the lower and upper bounds of the local mass at time $t_*$ as
\begin{equation} \label{lm-est-f}
c\eta^{C(C_0, C_1)} |I_{j_k}|^{1/2} \leq
\mathrm{Mass}(u(t_*); B_k') \leq C\eta^{-C(C_0, C_1)} |I_{j_k}|^{1/2}
\end{equation}
where we write $B_k' = B\big(x_{j_k}, (1/2)C\eta^{-C(C_0, C_1)}|I_{j_k}|^{1/2}\big)$ for short.
Here, we repeat the same step with the domain of integration enlarged twice in radius, yielding
\begin{equation} \label{lm-est-f.x2}
c\eta^{C(C_0, C_1)} |I_{j_k}|^{1/2} \leq
\mathrm{Mass}(u(t_*); B_k) \leq C\eta^{-C(C_0, C_1)} |I_{j_k}|^{1/2}
\end{equation}
where $B_k = B(x_{j_k}, C\eta^{-C(C_0, C_1)}|I_{j_k}|^{1/2})$.
This additional step is not redundant since we have to mitigate potential problems that the smooth cutoff in the definition of local masses can cause in other inequalities ahead.

Let $N = C_4 \log(1/\eta)$. We observe from \eqref{lm-est-f} and \eqref{lm-est-f.x2} that
\begin{align*}
\sum_{m=k+N}^{K} \int_{B_{m}} |u(t_*)|^2 dx
&\leq	\sum_{m=k+N}^{K} \mathrm{Mass}(u(t_*); B_m)^2 \\
&\leq	C\eta^{-2C(C_0, C_1)} \sum_{m=k+N}^{K} |I_{j_m}| \\
&\leq	C'\eta^{-2C(C_0, C_1)} 2^{-N} |I_{j_{k}}| \\
&\leq	C''\eta^{-2C'(C_0, C_1)} 2^{-N} \mathrm{Mass}(u(t_*); B_k')^2 \\
&\leq	C''\eta^{-2C'(C_0, C_1)} 2^{-N} \int_{B_{k}} |u(t_*)|^2 dx
\end{align*}
where $C<C'<C''$ and $C(C_0, C_1)<C'(C_0, C_1)$ are other large absolute constants.
Setting $C_4$ large enough yet still as an absolute constant, we have
\[
\sum_{m=k+N}^{K} \int_{B_{m}} |u(t_*)|^2 dx
\leq	\frac{1}{2} \int_{B_{k}} |u(t_*)|^2 dx,
\]
and hence after subtracting from $\int_{B_{k}} |u(t_*)|^2 dx$ and applying \eqref{lm-est-f}, we have
\[
\int_{B_k \setminus \bigcup\limits_{m=k+N}^{K}B_m} |u(t_*)|^2 dx
\geq \frac{1}{2} \int_{B_k} |u(t_*)|^2 dx
\geq c\eta^{C(C_0, C_1)} |I_{j_k}|.
\]
Taking H\"{o}lder's inequality,
we get an $L_x^\frac{2n}{n-2}$ estimate below, independent of $|I_{j_k}|$.
\[
c\eta^{C(C_0, C_1)}
\leq \int_{B_k \setminus \bigcup\limits_{m=k+N}^{K}B_m} |u(t_*)|^\frac{2n}{n-2} dx
\]
Summation in $k$ then yields
\[
c\eta^{C(C_0, C_1)}K
\leq	\sum_{k=1}^{K} \int_{\bigcup\limits_{m=k}^{k+N-1}E_m} |u(t_*)|^\frac{2n}{n-2}dx
\leq	N\int_{\mathbb{R}^n} |u(t_*)|^\frac{2n}{n-2}dx
\leq	CE^CN
\]
where $E_k = B_k \setminus \bigcup\limits_{m=k+1}^{K}B_m$ ($k=1,2,\cdots,K$) are disjoint domains of integration. The multiplier $N$ explains the number of overlaps caused by the simple addition of the integrals within the supports $E_k$.

In conclusion, we obtain the bound of $K$,
\[
K
\leq CE^C \eta^{-C(C_0,C_1)}N
\leq C(C_0,C_1,C_4) \eta^{-C(C_0,C_1)}
\]
and then we extract the bound of $J$ by (i) of Proposition \ref{cor:3.7-dec-seq},
\[
J
\leq	C (c\eta^{C(C_0,C_1)})^{-K} \eta^{-CC_0}
\leq	C \exp\big(C'(C_0,C_1,C_4) \eta^{-C''(C_0,C_1)})
\]
and finally by substituting $\eta = 1/(C_3 E^{C_2})$ back, we finish the proof of Theorem \ref{thrm:main}.

\subsection{Proof of Proposition \ref{prop:3.3-locmass}} \label{Subsection:3.2}
We revisit and aim to fill in the missing proof of Proposition \ref{prop:3.3-locmass}.

We start with normalizing the subinterval $I_j$ into $I_j = [0,1]$ by the invariance to the time translation and scaling. We then divide it into two, $[0,1/2]$ and $[1/2,1]$, and by using the pigeonhole principle and the time reflection symmetry, we can also assume that
\[
\int_{1/2}^{1} \int_{\mathbb{R}^n} |u|^{\frac{2(n+2)}{n-2}} dx\,dt
\geq \frac{\eta}{2}.
\]
As $I_j = [0,1]$ is a good subinterval, we also have
\[ \label{0-1-good}
\int_{0}^{1} \int_{\mathbb{R}^n} |u_-|^{\frac{2(n+2)}{n-2}} dx\,dt
\leq \eta^{C_1}.
\]
Here, we claim that there exists $\eta^{C_0}\leq t_* \leq 1/2$ such that
\begin{gather}
\int_{\mathbb{R}^n} |u_-(t_*-\eta^{C_0})|^{\frac{2(n+2)}{n-2}}dx \leq C\eta^{C_1}, \label{pigeon-1} \\
\int_{t_*-\eta^{C_0}}^{t_*} \int_{\mathbb{R}^n} |u|^{\frac{2(n+2)}{n-2}}dx\,dt \leq C\eta^{C_0}. \label{pigeon-2}
\end{gather}

This claim is shown by contradiction in a complicated but similar way to the pigeonhole principle. Consider the set
\[
A = \Big\{ \eta^{C_0} \leq a \leq \frac{1}{2}\,:\,
\int_{a-\eta^{C_0}}^{a} \int_{\mathbb{R}^n} |u|^{\frac{2(n+2)}{n-2}}dx\,dt
> C\eta^{C_0} \Big\}.
\]
We cannot have $|A| > c > 0$ since we violate the partitioning condition \eqref{partitioned} after the summation on properly selected times in $A$. This implies $\big|[\eta^{C_0},1/2] \setminus A\big| > c > 0$ and shows \eqref{pigeon-2}. If we choose to deny \eqref{pigeon-1}, then we have
\[
\int_{\mathbb{R}^n} |u_-(a)|^{\frac{2(n+2)}{n-2}}dx > C'\eta^{C_1}
\]
for every $a \in [\eta^{C_0},1/2] \setminus A$, and $I_j=[0,1]$ becomes an exceptional subinterval for $C'>0$ taken large enough. This finishes the proof of the claim.

Meanwhile, by applying the Lemma \ref{lem:3.2} to the time interval $[t_*, 1]$, we see that
\begin{equation}
\label{pre-v-est-1}
\int_{t_*}^{1} \int_{\mathbb{R}^n} |e^{i(t-t_*)\Delta} u(t_*)|^{\frac{2(n+2)}{n-2}} dx\,dt
\geq c\eta^C,
\end{equation}
and by the definition of a good subinterval, we also see that
\begin{equation}
\label{pre-v-est-2}
\int_{t_*}^{1} \int_{\mathbb{R}^n} |u_-|^{\frac{2(n+2)}{n-2}} dx\,dt
\leq \eta^{C_1}.
\end{equation}
In addition, Strichartz estimates and \eqref{pigeon-2} yields
\begin{equation}
\label{pre-v-est-3}
\norm{L_t^{\frac{2(n+2)}{n-2}}([t_*,1],L_x^{\frac{2(n+2)}{n-2}})}
{\int_{t_*-\eta^{C_0}}^{t_*}e^{i(t-s)\Delta}(|x|^{-b}|u|^ku)(s) \,ds}
\leq CE^C \eta^{cC_0}.
\end{equation}

Here we let $\displaystyle v(t,x) = \int_{T_-}^{t_*-\eta^{C_0}}e^{i(t-s)\Delta}(|x|^{-b}|u|^ku)(s) \,ds$.
Applying \eqref{pre-v-est-1}, \eqref{pre-v-est-2}, and \eqref{pre-v-est-3} to the Duhamel's formula
\[
e^{i(t-t_*)\Delta} u(t_*)
=	u_-(t)
- i\int_{t_*-\eta^{C_0}}^{t_*}e^{i(t-s)\Delta}(|x|^{-b}|u|^ku)(s) \,ds
- iv(t)
\]
gives a lower bound of $v$ in the sense that
\begin{equation}
\label{v-lo-bd}
\norm{L_t^{\frac{2(n+2)}{n-2}}([t_*,1],L_x^{\frac{2(n+2)}{n-2}})} {v}
\geq	c\eta^C
\end{equation}
with the constants $C_0$, $C_1$, and $C_2$ taken large enough. Similarly, we find the following upper bound of $v$ in the Strichartz norm.
\begin{equation}
\label{v-up-bd}
\norm{\dot{S}^1([t_*, 1])} {v}
\leq	CE^C
\end{equation}

Define $u^{(h)}(t,x) = u(t, x+h)$, the spatial translate of $u$ by $-h\in\mathbb{R}^n$, and similarly $v^{(h)}(t,x) = v(t, x+h)$. We first estimate the $h$-difference form $v^{(h)} - v$ as in the following lemma, which is a variant of \cite[Lemma 3.4]{Tao05} and reminiscent of H\"{o}lder continuity.
The estimate itself can keep the same Lebesgue space form, but its proof, later provided in Subsection \ref{Subsection:3.3}, requires Lorentz spaces as intermediates.

\begin{lem} \label{lem:3.8-v-holder-est}
Let $n\geq 3$ and $0<b<\min(2,n/2)$, except for $n=4$ with $1<b<2$.
Then for every $h\in\mathbb{R}^n$, we have
\[
\norm {L_t^{\infty}([t_*,1],L_x^{\frac{2(n+2)}{n-2}})} {v^{(h)} - v}
\leq	CE^C \eta^{-CC_0} |h|^c.
\]
\end{lem}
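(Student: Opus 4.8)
The plan is to exploit the time separation built into $v$. Since the forcing defining $v$ is supported in $[T_-,t_*-\eta^{C_0}]$, the function $v$ solves the free Schr\"{o}dinger equation on $[t_*,1]$, so $v(t)=\int_{T_-}^{t_*-\eta^{C_0}}e^{i(t-s)\Delta}G(s)\,ds$ with $G=|x|^{-b}|u|^ku$, and every propagator there carries a gap $t-s\geq\eta^{C_0}$. Writing $G^{(h)}(s,x)=|x+h|^{-b}|u(s,x+h)|^k u(s,x+h)$ and using that $e^{i\tau\Delta}$ commutes with translations,
\[
v^{(h)}(t)-v(t)=\int_{T_-}^{t_*-\eta^{C_0}}e^{i(t-s)\Delta}\big(G^{(h)}(s)-G(s)\big)\,ds .
\]
Since $\tfrac{2(n+2)}{n-2}>2$, the dispersive estimate applied with this exponent, followed by $L^{p,2}\hookrightarrow L^{p}$, gives for every $t\in[t_*,1]$
\[
\norm{L_x^{\frac{2(n+2)}{n-2}}}{v^{(h)}(t)-v(t)}\leq C\int_{T_-}^{t_*-\eta^{C_0}}|t-s|^{-\frac{2n}{n+2}}\,\norm{L_x^{\frac{2(n+2)}{n+6},2}}{G^{(h)}(s)-G(s)}\,ds,
\]
and because $\tfrac{2n}{n+2}>1$ the kernel integrates, over $s$ with $|t-s|\geq\eta^{C_0}$, to at most $C\eta^{-cC_0}$ uniformly in $t$.

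It therefore remains to bound $\norm{L_x^{2(n+2)/(n+6),2}}{G^{(h)}(s)-G(s)}$ by $CE^C|h|^c$ uniformly in $s\in[T_-,t_*-\eta^{C_0}]$, where the only input available is the coercivity bound $\norm{\dot{H}_x^1}{u(s)}\leq CE^C$. I split
\[
G^{(h)}-G=\big(|x+h|^{-b}-|x|^{-b}\big)|u^{(h)}|^ku^{(h)}+|x|^{-b}\big(|u^{(h)}|^ku^{(h)}-|u|^ku\big)=:\mathrm{I}+\mathrm{II},
\]
and estimate $\mathrm{II}$ pointwise by $C|x|^{-b}(|u^{(h)}|^k+|u|^k)|u^{(h)}-u|$. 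Two fractional Hölder-continuity facts drive the argument. First, by homogeneity $\norm{L^{\rho}}{|x+h|^{-b}-|x|^{-b}}=|h|^{\,n/\rho-b}\norm{L^{\rho}}{|x+e|^{-b}-|x|^{-b}}$ for a unit vector $e$, and the latter norm is finite exactly when $\tfrac{n}{b+1}<\rho<\tfrac nb$ (the two poles contribute $|x|^{-b}$-type singularities, infinity contributes $|x|^{-b-1}$-type decay), so after real interpolation one gets a genuine positive power $|h|^{c}$ in a Lorentz space $L^{\rho,q}$. Second, real interpolating the linear map $u\mapsto u^{(h)}-u$ between the trivial bound $\norm{L^{2n/(n-2),2}}{u^{(h)}-u}\leq C\norm{\dot{H}^1}{u}$ (via Sobolev) and the Fundamental Theorem of Calculus bound $\norm{L^{2}}{u^{(h)}-u}\leq|h|\,\norm{\dot{H}^1}{u}$ gives, for every $2<p<\tfrac{2n}{n-2}$,
\[
\norm{L^{p,2}}{u^{(h)}-u}\leq C|h|^{\,1-\frac n2+\frac np}\,\norm{\dot{H}^1}{u} .
\]

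I then combine these with H\"{o}lder's inequality for Lorentz spaces, the Sobolev embedding $\dot{H}^1\hookrightarrow L^{2n/(n-2),2}$, and the memberships $|x|^{-b}\in L^{n/b,\infty}$, $|x|^{-b-1}\in L^{n/(b+1),\infty}$: in term $\mathrm I$ one pairs the $|h|^c$-bound of $|x+h|^{-b}-|x|^{-b}$ with the factor $|u^{(h)}|^{k+1}$, estimated by $\norm{L^{2n/(n-2),2}}{u}^{k+1}$; in term $\mathrm{II}$ one pairs $\norm{L^{n/b,\infty}}{|x|^{-b}}$, the factor $|u|^k$ (again with $u\in L^{2n/(n-2)}$), and the $|h|^c$-bound of $u^{(h)}-u$; in both cases the remaining exponent is pinned down and one checks that the product lands in $L^{2(n+2)/(n+6),2}$ with bound $C|h|^c\norm{\dot{H}^1}{u}^{k+1}\leq CE^C|h|^c$. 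Feeding this into the displayed integral and taking the supremum over $t\in[t_*,1]$ finishes the proof.

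The main obstacle is precisely this exponent bookkeeping. One needs the $u$-difference measured in some $L^{p,\cdot}$ with $2<p<\tfrac{2n}{n-2}$ (where $\dot{H}^1$ controls it), the $|x|^{-b}$-difference in some $L^{\rho,\cdot}$ with $\tfrac{n}{b+1}<\rho<\tfrac nb$ (where the homogeneity bound holds), and the Hölder product forced to equal $L^{2(n+2)/(n+6),2}$ with second index $\leq 2$ (as the dispersive estimate demands), all of it simultaneously compatible with the endpoint Lorentz spaces of the multipliers $|x|^{-b}$ and $|x|^{-b-1}$. Carrying out this three-way balance in Subsection~\ref{Subsection:3.3} forces an admissibility restriction that fails exactly for $n=4$ with $1<b<2$, which is why that range is excluded from the statement.
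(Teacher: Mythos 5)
Your proof is correct and takes a genuinely different route from the paper's. The paper handles $n\geq 4$ by first bounding $\nabla v$: it estimates $\nabla(|x|^{-b}|u|^ku)$ in $L_x^{2n/(n+4),2}$, applies the dispersive estimate with the sharp decay $|t-s|^{-2}$ to put $\nabla v$ into $L_x^{2n/(n-4),2}$ (into $L^\infty_x$ when $n=4$), interpolates with coercivity, and finally recovers the $h$-difference of $v$ via the Fundamental Theorem of Calculus; only for $n=3$ does it switch to the direct $h$-difference decomposition of the nonlinearity that you use. You instead run that decomposition uniformly in $n$, pair it with the dispersive estimate at the fixed non-endpoint pair $L^{2(n+2)/(n+6),2}\to L^{2(n+2)/(n-2),2}$, and absorb the milder but still integrable kernel $|t-s|^{-2n/(n+2)}$ over the gap $|t-s|\geq\eta^{C_0}$ at the cost of the harmless factor $\eta^{-cC_0}$. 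Since both exponents stay strictly between $1$ and $\infty$ for every $n\geq 3$, the $L^1/L^\infty$ endpoint that the paper hits at $n=4$ never appears; the second Lorentz indices in your H\"older step sum to $(k+1)/2\geq 1/2$, which is exactly what is needed to land in $L^{2(n+2)/(n+6),2}$; and the $|h|$-exponent works out to $(n-2)/(n+2)$ in both terms.

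One correction to your final paragraph: the three-way balance you set up does \emph{not} force the exclusion of $n=4$ with $1<b<2$. For $n=4$ your multiplier exponent is $\rho=12/(1+3b)$, which is strictly inside $\bigl(4/(b+1),\,4/b\bigr)$ for the whole range $0<b<2$, and the $u$-difference lands in $L^{3,2}$ with $3\in(2,4)$; all second indices remain admissible. The restriction $n=4$, $1\leq b<2$ in the paper's statement is a limitation of the paper's $\nabla v$-plus-$L^1\!\to\!L^\infty$ route (and is retained in any case because Section~\ref{Section:4}'s proof of Lemma~\ref{lem:3.2} for higher dimensions independently needs it), not of the route you propose. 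Your approach thus unifies the three dimensional cases and in fact proves a slightly stronger version of Lemma~\ref{lem:3.8-v-holder-est}.
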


Averaging Lemma \ref{lem:3.8-v-holder-est} over $|h| \leq r$, where $0 < r < 1$ is to be chosen shortly, we have
\[
\norm {L_t^{\infty}([t_*,1],L_x^{\frac{2(n+2)}{n-2}})} {v^{\rm av} - v}
\leq	CE^C \eta^{-CC_0} r^c
\]
where we define $\displaystyle v^{\rm av}(t,x) = \int_{\mathbb{R}^n} \chi(y) v(t,x+ry) \,dy$ and $\chi$ is a bump function as mentioned in Subsection \ref{Subsection:2.3}. Taking H\"{o}lder's inequality in time gives
\[
\norm {L_t^{\frac{2(n+2)}{n-2}}([t_*,1],L_x^{\frac{2(n+2)}{n-2}})} {v^{\rm av} - v}
\leq	CE^C \eta^{-CC_0} r^c,
\]
and by taking $C_2$ large enough, taking $r = \eta^{C'C_0}$ for $C'$ large enough, and considering \eqref{v-lo-bd} together, we obtain the following lower bound of $v^{\rm av}$.
\begin{equation}
\label{v-avg-lo-bd}
\norm{L_t^{\frac{2(n+2)}{n-2}}([t_*,1],L_x^{\frac{2(n+2)}{n-2}})} {v^{\rm av}}
\geq	c\eta^C
\end{equation}
Meanwhile, an upper bound of $v^{\rm av}$ is obtainable as a simple application of H\"{o}lder's inequality in time, Young's inequality in space, and coercivity.
\begin{equation}
\label{v-avg-up-bd}
\norm{L_t^{\frac{2n}{n-2}}([t_*,1],L_x^{\frac{2n}{n-2}})} {v^{\rm av}}
\leq	CE^C.
\end{equation}
Taking H\"{o}lder's inequality on \eqref{v-avg-lo-bd} and \eqref{v-avg-up-bd} in reverse order, we get
\[
\norm{L_t^{\infty}([t_*,1],L_x^{\infty})} {v^{\rm av}}
\geq c\eta^CE^{-C},
\]
and therefore, there exists a spacetime point $(t_j, x_j) \in [t_*, 1] \times \mathbb{R}^n$ such that
\[
\Big| \int_{\mathbb{R}^n} \chi(y) v(t_j, x_j+ry) \,dy \Big| \geq c\eta^CE^{-C},
\]
or in particular by H\"{o}lder's inequality again
\[
\mathrm{Mass}(v(t_j), B(x_j, R)) \geq c\eta^CE^{-C}r^C
\]
for every $R \geq r$. Noting that $v$ is a linear evolution on $[0,1]$ with energy $O(E^C)$ in view of \eqref{v-up-bd}, we can use \eqref{2.lm-ii} with $v$ in place of $u$ and deduce
\[
\mathrm{Mass}(v(t_*-\eta^{C_0}), B(x_j, R)) \geq c\eta^CE^{-C}r^C
\]
where we take $R = C\eta^{-C}E^{C}r^{-C}$ with large but absolute constants $C$.
(Since $r = \eta^{C'C_0}$, we now have $R = CE^C\eta^{-C-CC_0}$ with large but still absolute constants $C$.)

Therefore, considering the following bound of the local mass of $u_-$, which is obtained by applying H\"{o}lder's inequality at \eqref{pigeon-1},
\[
\mathrm{Mass}(u_-(t_*-\eta^{C_0}), B(x_j, R)) \leq CR^c\eta^{cC_1}
\]
we can claim as below with the constants $c$, $C$ adjusted if necessary,
\begin{equation} \label{u.lobd-lm}
\mathrm{Mass}(u(t_*-\eta^{C_0}), B(x_j, R)) \geq c\eta^CE^{-C}r^C = cE^{-C}\eta^{C+CC_0},
\end{equation}
in view of Duhamel's formula
\[
u(t_* - \eta^{C_0}) = u_-(t_* - \eta^{C_0}) - iv(t_* - \eta^{C_0})
\]
and by taking $C_1$ large enough depending on $C_0$.

Applying \eqref{2.lm-ii} to \eqref{u.lobd-lm}, we obtain the desired conclusion
\[
\mathrm{Mass}(u(t), B(x_j, R)) \geq cE^{-C}\eta^{C+CC_0}
\]
for every $t \in [0,1]$ and finish the proof of Proposition \ref{prop:3.3-locmass}.

\subsection{Proof of Lemma \ref{lem:3.8-v-holder-est}} \label{Subsection:3.3}
Lemma \ref{lem:3.8-v-holder-est} is one of the places where Lorentz spaces play crucial roles in this paper. We divide the cases by dimension $n$.

Consider $n\geq 5$. We see that the nonlinearity estimate below holds uniformly in time,
\begin{equation} \label{eq:nonl-leibniz}
\begin{aligned}
&\quad	\norm{L_x^{\frac{2n}{n+4},2}} {\nabla (|x|^{-b}|u|^ku)} \\
&\leq	C \norm{L_x^{\frac{n}{b+1}, \infty}} {|x|^{-(b+1)}}
\norm{L_x^{\frac{2n}{n-2}}} {u}^{\frac{2(2-b)}{n-2}}
\norm{L_x^{\frac{2n}{n-2},2}} {u}
+	C \norm{L_x^{\frac{n}{b}, \infty}} {|x|^{-b}}
\norm{L_x^{\frac{2n}{n-2}}} {u}^{\frac{2(2-b)}{n-2}}
\norm{L_x^{2}} {\nabla u} \\
&\leq	CE^C
\end{aligned}
\end{equation}
and hence by combining with the dispersive estimate of $e^{i(t-s)\Delta}$, we have
\begin{equation} \label{eq:nonl-disp-est,n-4}
\begin{aligned}
\norm{L_x^{\frac{2n}{n-4},2}} {\nabla e^{i(t-s)\Delta}(|x|^{-b}|u|^ku)(s)}
&\leq	C|t-s|^{-2}\norm{L_x^{\frac{2n}{n+4},2}} {\nabla (|x|^{-b}|u|^ku)(s)} \\
&\leq	CE^C|t-s|^{-2}.
\end{aligned}
\end{equation}
Since $L_x^{\frac{2n}{n-4},2}(\mathbb{R}^n) \hookrightarrow L_x^{\frac{2n}{n-4}}(\mathbb{R}^n)$,
by Minkowski's inequality for integrals to \eqref{eq:nonl-disp-est,n-4} in $s$,
we have
\begin{align*}
\sup_{t_* \leq t \leq 1} \norm{L_x^{\frac{2n}{n-4}}}{\nabla v}
&\leq	\sup_{t_* \leq t \leq 1} C \int_{T_-}^{t_*-\eta^{C_0}} \norm{L_x^{\frac{2n}{n-4}}} {\nabla e^{i(t-s)\Delta}(|x|^{-b}|u|^ku)(s)} ds \\
&\leq	CE^C \int_{-\infty}^{t_*-\eta^{C_0}} |t_*-s|^{-2} ds \\
&\leq	CE^C \eta^{-C_0}.
\end{align*}
Interpolating this with the coercivity
$ \norm{L_t^\infty([t_*,1];L_x^{\frac{2n}{n-2}})}{\nabla v} \leq CE^C $,
we have
\[
\norm{L_t^\infty([t_*,1],L_x^{\frac{2(n+2)}{n-2}})}{\nabla v} \leq CE^C \eta^{-cC_0}
\]
and the proof is completed by the Fundamental Theorem of Calculus.

We next consider $n=4$. The idea is similar to the case $n\geq 5$, but this time, we reach a bad endpoint $L^{\infty,2}(\mathbb{R}^n) = \{0\}$ in \eqref{eq:nonl-disp-est,n-4} and thus cannot use Lorentz spaces. Instead, the usual Lebesgue $L^{\infty}(\mathbb{R}^n)$ works well for $0<b\leq 1$. We also replace $L^{1,2}(\mathbb{R}^n)$ in \eqref{eq:nonl-leibniz} with $L^{1}(\mathbb{R}^n)$ accordingly.
\begin{align*}
\norm{L_x^{1}} {\nabla (|x|^{-b}|u|^ku)}
&\leq	C \norm{L_x^{\frac{4}{b+1}, \infty}} {|x|^{-(b+1)}}
\norm{L_x^{4,2}} {u}^{3-b}
+	C \norm{L_x^{\frac{4}{b}, \infty}} {|x|^{-b}}
\norm{L_x^{4,2}} {u}^{2-b}
\norm{L_x^{2}} {\nabla u} \\
&\leq	CE^C
\end{align*}
For $b > 1$, We note some problem in dividing the second Lorentz space exponents, although we disregard this case.

We lastly consider $n=3$.
Unlike $n\geq 4$, we have to directly handle the integrand of $v^{(h)}-v$ rather than differentiating it.
Once we find that
\begin{align*}
&\quad	\norm{L_x^{1}} {|x+h|^{-b}(|u|^ku)^{(h)} - |x|^{-b}|u|^ku} \\
&\leq	C \norm{L_x^{\frac{3}{b+1/2}}} {|x+h|^{-b} - |x|^{-b}}
\norm{L_x^{6}} {u}^{5-2b}
+	C \norm{L_x^{\frac{3}{b}, \infty}} {|x|^{-b}}
\norm{L_x^{6,2}} {u}^{4-2b}
\norm{L_x^{3,2}} {u^{(h)} - u} \\
&\leq	CE^C |h|^{1/2},
\end{align*}
the remaining process is similar to the case $n = 4$, except that we do not need the Fundamental Theorem of Calculus since we have already resolved all the necessary $h$-difference forms.

We only need to look at two more things. First, for every $0<\theta<1$, we have
\begin{equation} \label{holder-perturb-xpownb}
\norm{L_x^{\frac{n}{b+\theta},1}(\mathbb{R}^n)} {|x+h|^{-b} - |x|^{-b}} \leq C |h|^\theta
\end{equation}
where $C$ is an absolute constant depending only on $n$, $b$, and $\theta$. We can find an explicit bound for the decreasing rearrangement of $|x+h|^{-b} - |x|^{-b}$ to deduce \eqref{holder-perturb-xpownb}.
Second and last, we have
\[
\norm{L_x^{3,2}} {u^{(h)} - u}
\leq	C \norm{L_x^{2}} {u^{(h)} - u}^{1/2} \norm{L_x^{6,2}} {u^{(h)} - u}^{1/2}
\leq	C (CE^C|h|)^{1/2} (CE^C)^{1/2},
\]
which comes from the Fundamental Theorem of Calculus on the $L_x^{2}(\mathbb{R}^n)$ factor and Sobolev embedding on the $L_x^{6,2}(\mathbb{R}^n)$ factor.
This completes the proof of Lemma \ref{lem:3.8-v-holder-est}.

\section{Proof of Lemma \ref{lem:3.2} for Higher Dimensions} \label{Section:4}

This last section is dedicated to applying the arguments of Tao \cite[Appendix]{Tao05} to INLS. The usual bootstrapping arguments work directly on INLS and use Duhamel's formula singly, while these variant arguments work on the first spatial derivative of INLS and use Duhamel's formula iteratively by Neumann approximation. In this way, the previously unreachable higher-dimensional parameter setups, $n+2b \geq 6$, become reachable.

When $n=4$ or $5$, the effect of having both the low dimension and the low nonlinearity power leads to some technical difficulties; it is only possible to additionally resolve $5/4 < b < 2$ for $n=5$, and never for $n=4$. For $n\geq 6$, the dimension is high enough to eliminate the negative effect for every $0<b<2$.


We start with differentiating \eqref{inls} once in $x$ and obtain the following equation.
Compared to the standard NLS, we work with one more nonlinear term due to the non-constant multiplier $|x|^{-b}$.
\begin{equation} \label{inls-1st-deriv}
i\partial_t \nabla u + \Delta \nabla u
=	-b|x|^{-(b+2)}x \cdot |u|^k u
+ \frac{k+2}{2}|x|^{-b}|u|^{k} \nabla u
+ \frac{k}{2}|x|^{-b}|u|^{k-2}u^2 \overline{\nabla u}
\end{equation}
Fix $u$ and let $A$ be a linear operator defined on $\mathbb{C}^n$-valued functions $w \in \dot{S}^0([t_1,t_2])$ by
\[
Aw(t,x) := -i\int_{t_1}^{t} e^{i(t-s)\Delta} A_0w(s) \,ds
\]
where $A_0 w$ is defined as below. Note that $A_0 \nabla u$ is exactly the right side of \eqref{inls-1st-deriv}.
\begin{gather*}
A_0 w := A_{0}^{1} w + A_{0}^{2} w + A_{0}^{2'} w \\
\bigg(	A_{0}^{1} w := b|x|^{-(b+2)}x |u|^k \frac{1}{|\nabla|^2}\nabla\cdot w, \quad
A_{0}^{2} w := \frac{k+2}{2}|x|^{-b}|u|^{k} w, \quad
A_{0}^{2'} w := \frac{k}{2}|x|^{-b}|u|^{k-2}u^2 \overline{w}
\bigg)
\end{gather*}
Recalling the assumption $u_1(t,x) = e^{i(t-t_1)\Delta}u(t_1)$, we have a simplified Duhamel's formula
\[
\nabla u = \nabla u_1 + A\nabla u
\]
by the definition of $A$. We also observe that $A$ is a contraction mapping on $\dot{S}^0([t_1,t_2])$ in view of the estimate
\begin{equation} \label{A-contracts-S0}
\norm{\dot{S}^0([t_1,t_2])} {Aw} \leq C\eta^{\frac{2-b}{n+2}} \norm{\dot{S}^0([t_1,t_2])} {w}.
\end{equation}

The contraction property comes from the following estimate of $A_0 w$ followed by Strichartz estimates. To understand negative Sobolev-Lorentz spaces and the norm bounds in $\eta$, we recall the $L^{p,q}(\mathbb{R}^n)$ boundedness of Riesz transforms ($1<p<\infty$) and the assumption \eqref{lem3.2-assume} respectively. Reusing the exponents $p_\beta$, $q_\beta$, and $\tilde{q}_\beta$ defined in Subsection \ref{Subsection:2.2}, we see that
\begin{align*}
&\quad	\norm{L_t^{2} ([t_1,t_2]; L_x^{\frac{2n}{n+2},2})}
{A_0 w} \\
&\leq	C \norm{L_x^{\frac{n}{b+1}, \infty}} {|x|^{-(b+1)}}
\norm{L_t^{p_0}([t_1,t_2], L_x^{\tilde{q}_0})} {u}^{k}
\norm{L_t^{p_b}([t_1,t_2], \dot{W}_x^{-1;\tilde{q}_b,2})} {w} \\
&\quad	+ C \norm{L_x^{\frac{n}{b}, \infty}} {|x|^{-b}}
\norm{L_t^{p_0}([t_1,t_2], L_x^{\tilde{q}_0})} {u}^{k}
\norm{L_t^{p_b}([t_1,t_2], L_x^{q_b,2})} {w} \\
&\leq	C \eta^{\frac{2-b}{n+2}} \norm{\dot{S}^0([t_1,t_2])} {w}.
\end{align*}

Let $M$ be a positive integer. We write down the $M$\textsuperscript{th} order truncation of the Neumann series
\[
{\nabla u - \sum_{m=0}^{M-1} A^m\nabla u_1}
=	{A^M\nabla u_1}
+ {A\bigg(\nabla u - \sum_{m=0}^{M-1} A^m\nabla u_1\bigg)}.
\]
Since $\norm{\dot{S}^0([t_1,t_2])}{\nabla u_1} \leq CE^C$ by Strichartz estimate, we bootstrap on $\dot{S}^0([t_1,t_2])$ to have
\[
\norm{\dot{S}^0([t_1,t_2])} {\nabla u - \sum_{m=0}^{M-1} A^m\nabla u_1}
\leq CE^C (C\eta^{\frac{2-b}{n+2}})^M.
\]
Let $X = L_t^{\frac{2(n+2)}{n-2}}([t_1,t_2], \dot{W}_x^{-1,\frac{2(n+2)}{n-2}}(\mathbb{R}^n))$. Here, we take $M$ as the least integer greater than or equal to $4(n+2)/(2-b)$. By Sobolev embedding $\dot{S}^0([t_1,t_2]) \hookrightarrow X$ and taking $\eta$ possibly smaller, we can tidy the inequality as
\[
\norm{X} {\nabla u - \sum_{m=0}^{M-1} A^m\nabla u_1}
\leq C \eta.
\]
The assumption \eqref{lem3.2-assume} says $\norm{X}{\nabla u} \geq c\eta^{\frac{n-2}{2(n+2)}}$, so by subtraction we have
\begin{equation} \label{eq:lobd-sum-AmDu1}
c\eta^{\frac{n-2}{2(n+2)}}
\leq	\norm{X} {\sum_{m=0}^{M-1} A^m\nabla u_1}
\leq	\sum_{m=0}^{M-1} \norm{X} {A^m\nabla u_1}.
\end{equation}
At this point, we introduce one special lemma whose proof shall be given shortly.

\begin{lem} \label{lem:4.1}
There exists $0<\theta<1$ depending only on $n$ and $b$ such that
\[
\norm{X}{Aw}
\leq CE^C \norm{\dot{S}^{0}([t_1,t_2])}{w}^{1-\theta} \norm{X}{w}^{\theta}.
\]
for every $\mathbb{C}^n$-valued function $w \in \dot{S}^0([t_1,t_2])$.
\end{lem}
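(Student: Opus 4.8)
The plan is to interpolate the two natural estimates on the operator $A$: a "Strichartz-type" estimate that controls $Aw$ in the positive-regularity space $X = L_t^{2(n+2)/(n-2)}([t_1,t_2], \dot W_x^{-1,2(n+2)/(n-2)})$ by the $\dot S^0$-norm of $w$, and a "dispersive-type" estimate that controls $Aw$ in the same space $X$ but by the $X$-norm of $w$ itself — at the cost of an inverse power of the interval length. The gain parameter $\theta$ will come from balancing these two via Hölder in time. Concretely, first I would record that by the $A_0$-estimate displayed just before the lemma together with the inhomogeneous Strichartz estimate (Proposition on Strichartz estimates (ii)) and Sobolev embedding $\dot S^0([t_1,t_2]) \hookrightarrow X$, one has
\[
\norm{X}{Aw} \leq C \norm{\dot S^0([t_1,t_2])}{Aw} \leq CE^C \norm{\dot S^0([t_1,t_2])}{w},
\]
which is the "$\theta = 0$ endpoint" (note the $\eta$-power is harmless here, absorbable into $CE^C$ up to shrinking $\eta$).

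For the "$\theta = 1$-flavored" bound, I would go through the dispersive estimate directly. Writing $Aw(t) = -i\int_{t_1}^t e^{i(t-s)\Delta} A_0 w(s)\,ds$ and using $|\nabla|^{-1}$ commuting with $e^{i(t-s)\Delta}$, the task reduces to estimating $\norm{L_x^{2(n+2)/(n-2),2}}{e^{i(t-s)\Delta} |\nabla|^{-1} A_0 w(s)}$ by $C|t-s|^{-\alpha} \norm{L_x^{(2(n+2)/(n-2))',2}}{|\nabla|^{-1} A_0 w(s)}$ with $\alpha = n(1/2 - (n-2)/(2(n+2))) = 2n/(n+2)$, which is $> 1$ precisely when $n > 2$ — good — but one must also be able to dominate the resulting dual-space norm of $|\nabla|^{-1} A_0 w(s)$ by $E^C \norm{\dot W_x^{-1,2(n+2)/(n-2)}}{w(s)}$ uniformly in $s$, via Hölder/Sobolev in $x$ exactly as in the $A_0$-estimate above but with the output Lebesgue/Lorentz exponents shifted. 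Then Minkowski's integral inequality in $s$ over $[t_1, t_2]$ (the integrable singularity $|t-s|^{-2n/(n+2)}$ is fine since $2n/(n+2) < 1$... wait, $2n/(n+2) > 1$ for $n \geq 3$, so one instead integrates in $s$ only over a truncated region or uses that the full integral against the $X$-norm in $s$ is bounded after Hölder in $s$ against $|t_2-t_1|$ — I would pick up a positive power of $|t_2 - t_1|$ here). Combining: $\norm{X}{Aw} \leq CE^C |t_2-t_1|^{\delta} \norm{X}{w}$ for some $\delta > 0$.

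Having the two bounds $\norm{X}{Aw} \leq CE^C \norm{\dot S^0}{w}$ and $\norm{X}{Aw} \leq CE^C |t_2-t_1|^\delta \norm{X}{w}$, I would first note that on the partitioned interval one may assume $|t_2 - t_1| \leq 1$ (rescale, or else the spacetime norm bound is already controlled), so the second bound is clean. The interpolated estimate $\norm{X}{Aw} \leq CE^C \norm{\dot S^0}{w}^{1-\theta}\norm{X}{w}^\theta$ then follows for any $0 < \theta < 1$ by taking the geometric mean of the two — in fact one can simply take any $\theta \in (0,1)$ depending on $n,b$, e.g. $\theta = 1/2$, since both inequalities hold unconditionally. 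The reason $\theta$ must be recorded as depending on $n,b$ rather than fixed is that the later iteration in Section~\ref{Section:4} (via Lemma~\ref{lem:4.1} combined with \eqref{eq:lobd-sum-AmDu1} and \eqref{A-contracts-S0}) needs $\theta$ large enough relative to the Neumann-series loss, which is what forces the side conditions $n > 4$ and $n(n-6) + 4b > 0$.

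**The main obstacle** I anticipate is the $x$-space bookkeeping in the dispersive step: one must verify that $|\nabla|^{-1} A_0 w(s)$ genuinely lands in the dual Lorentz space $L_x^{(2(n+2)/(n-2))',2}$ with the right norm bound, which requires distributing the weights $|x|^{-(b+1)}$ and $|x|^{-b}$ (the former in $L^{n/(b+1),\infty}$, the latter in $L^{n/b,\infty}$) against $|u|^k$ and against $w$ in the correct Hölder-Lorentz configuration — and the Riesz transform / $|\nabla|^{-1}$ must be placed so that it acts on a function in a genuine Lorentz space with finite first exponent strictly between $1$ and $\infty$. This is exactly the place where the dimensional restriction can bite: if $n$ and $b$ are both small the target exponent $(2(n+2)/(n-2))'$ may be too small to leave room for the weight, which is presumably why the lemma (and hence the whole higher-dimensional argument) fails for $n=4$, $b\ge 1$ and $n=5$, $1/2 \le b \le 5/4$. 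I would handle this by computing the Hölder exponent identities explicitly and checking positivity of all the resulting exponents as the single verification that pins down the admissible $(n,b)$ range.
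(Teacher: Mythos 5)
The interpolation strategy you propose does not work, and the gap is in the ``$\theta=1$-flavored'' endpoint. You claim a bound of the form $\norm{X}{Aw}\leq CE^C|t_2-t_1|^\delta\norm{X}{w}$, to be obtained by the dispersive estimate and H\"older. But $X = L_t^{2(n+2)/(n-2)}\dot W_x^{-1,2(n+2)/(n-2)}$ controls $w$ only through its \emph{negative-regularity} surrogate $|\nabla|^{-1}w$, while two of the three pieces of $A_0w$, namely $A_0^2w = \tfrac{k+2}{2}|x|^{-b}|u|^kw$ and $A_0^{2'}w = \tfrac{k}{2}|x|^{-b}|u|^{k-2}u^2\overline{w}$, are \emph{pointwise} multiplications of the raw distribution $w$ by the non-smooth weight $|x|^{-b}|u|^k$. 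Pointwise multiplication by a rough function is not bounded from $\dot W^{-1,p}$ to $\dot W^{-1,p'}$, and there is no way to route the estimate through $|\nabla|^{-1}w$ alone; the paper's $A_0$-estimate (displayed just before the lemma) correctly pays for these two terms with the positive-regularity $L_x^{q_b,2}$ norm of $w$ (i.e., part of $\dot S^0$), not with an $X$-type norm. Independently, even the exponent bookkeeping in your dispersive chain does not close: with $p = 2(n+2)/(n-2)$ one needs $1/p' - 1/p = 4/(n+2)$ to equal the $2/n$ supplied by H\"older against $|x|^{-b}|u|^k$, which forces $n=2$. And you yourself flag (then leave unresolved) the fact that the decay $|t-s|^{-2n/(n+2)}$ is non-integrable in $s$ near $s=t$ for $n\geq 3$.

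Since the $\theta=1$ endpoint fails, the geometric mean of the two endpoints does not yield the lemma, and your suggestion that ``any $\theta\in(0,1)$, e.g.\ $\theta=1/2$, works since both inequalities hold unconditionally'' is false. The paper's proof is quite different and much more delicate: after normalizing $\norm{\dot S^0}{w}=1$, $\norm{X}{w}=\alpha$, it reduces to a fixed-time pointwise Hedberg-type inequality for $|\nabla|^{-1}Aw$, proves it by layer-cake and dyadic decomposition in $|y|$, reduces to a smooth bump-averaged integral, and then splits time into a distant past (handled by H\"older/Strichartz with a gain from the $\tau$-cutoff) and a recent past (handled by the $h$-difference and spatial-averaging device from Lemma~\ref{lem:3.8-v-holder-est}, exploiting the dispersive $\langle t\rangle^{-2}$ decay of $e^{it\Delta}\phi$). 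The constraints $n>4$ and $n(n-6)+4b>0$ arise inside this proof from the admissible Lorentz exponents in \eqref{A0.1-perturb}--\eqref{A0.2-perturb}, not, as you suggest, from needing $\theta$ large enough in the Neumann iteration; in fact any fixed $\theta\in(0,1)$ would suffice for the iteration.
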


We iterate Lemma \ref{lem:4.1} with $w = A^m \nabla u_1$ over $m$. To simplify the estimating process, we note that $\norm{\dot{S}^0([t_1,t_2])}{A^m \nabla u_1} \leq CE^C$ uniformly in $m$. Eventually, we see that
\[
\norm{X} {A^m\nabla u_1}
\leq	(CE^C)^{(2-\theta)\sum_{\nu=0}^{m-1}\theta^{\nu}} \norm{X} {\nabla u_1}^{\theta^m}
\leq	(CE^C)^{\frac{2-\theta}{1-\theta}} \max\Big(\! \norm{X} {\nabla u_1}, \norm{X} {\nabla u_1}^{\theta^M} \!\Big)
\]
for every $m=0,1,\cdots,M-1$. Then \eqref{eq:lobd-sum-AmDu1} is turned into
\[
c\eta^{\frac{n-2}{2(n+2)}}
\leq	M(CE^C)^{\frac{2-\theta}{1-\theta}} \max\Big(\! \norm{X} {\nabla u_1}, \norm{X} {\nabla u_1}^{\theta^M} \!\Big)
\]
and therefore, we conclude with the following estimate for small $\eta$.
\[
\norm{X} {\nabla u_1}
\geq c' E^{-C'} \eta^{\frac{n-2}{2(n+2)} \theta^{-M}}
\geq c'' \eta^{\frac{n-2}{4(n+2)} \theta^{-M}}
\]
This finishes the proof of Lemma \ref{lem:3.2}, minus Lemma \ref{lem:4.1}.

\subsection{Proof of Lemma \ref{lem:4.1}} \label{Subsection:4.1}
We start with normalization $\norm{\dot{S}^0([t_1,t_2])}{w} = 1$ and $\norm{X}{w} = \alpha$ as in the proof of \cite[Lemma 4.1]{Tao05}. By Sobolev embedding, we have $\alpha \leq C$. It is enough to show that $\norm{X}{Aw} \leq CE^C \alpha^\theta$. Since
\begin{align*}
&\quad	\norm{L_t^{\frac{2(n+2)}{n-2}}([t_1,t_2], L_x^{\frac{2(n+2)}{n-2}}(\mathbb{R}^n))}{|\nabla|^{-1}Aw} \\
&\leq	\norm{L_t^{\infty}([t_1,t_2], L_x^{\frac{2n}{n-2}}(\mathbb{R}^n))}{|\nabla|^{-1}Aw}^{\frac{2}{n}} \norm{L_t^{\frac{2(n+2)}{n}}([t_1,t_2], L_x^{\frac{2n(n+2)}{n^2-2n-4}}(\mathbb{R}^n))}{|\nabla|^{-1}Aw}^{\frac{n-2}{n}} \\
&\leq	C \norm{L_t^{\infty}([t_1,t_2], L_x^{\frac{2n}{n-2}}(\mathbb{R}^n))}{|\nabla|^{-1}Aw}^{\frac{2}{n}} \norm{\dot{S}^0([t_1,t_2])}{Aw}^{\frac{n-2}{n}} \\
&\leq	C \norm{L_t^{\infty}([t_1,t_2], L_x^{\frac{2n}{n-2}}(\mathbb{R}^n))}{|\nabla|^{-1}Aw}^{\frac{2}{n}}
\end{align*}
by H\"{o}lder's inequality, Sobolev embedding and \eqref{A-contracts-S0}, we only have to show the existence of a small constant $0<\theta'<n/2$ such that for every $t_1 \leq t \leq t_2$,
\[
\norm{L_x^{\frac{2n}{n-2}}(\mathbb{R}^n)}{|\nabla|^{-1}Aw(t)} \leq CE^C \alpha^{\theta'}.
\]

We set $t=0$ by time translation and then let $v(x) = Aw(0, x)$. Let $\mathscr{M}$ be the Hardy-Littlewood maximal function
\[
\mathscr{M}f(x) = \sup_{r>0} \frac{1}{|B(x,r)|} \int_{B(x,r)}|f(y)| dy.
\]
It suffices to prove the following pointwise Hedberg-type inequality (See \citep{Hedberg72} for the inequalities involving maximal functions by Hedberg.)
\begin{equation} \label{D(-1)Aw-est-t=0,x}
\big||\nabla|^{-1}v\big|(x) \leq CE^C \alpha^{\theta'} (\mathscr{M}v(x))^{\frac{n-2}{n}}
\end{equation}
so that we can conclude by Hardy-Littlewood maximal inequality and \eqref{A-contracts-S0} that
\[
\norm{L_x^{\frac{2n}{n-2}}(\mathbb{R}^n)}{|\nabla|^{-1}v}
\leq	CE^C \alpha^{\theta'} \norm{L_x^2(\mathbb{R}^n)}{\mathscr{M}v}^{\frac{n-2}{n}}
\leq	CE^C \alpha^{\theta'}.
\]

We assume $0<\mathscr{M}v(x)<\infty$, since \eqref{D(-1)Aw-est-t=0,x} is trivial when $\mathscr{M}v(x)$ is either $0$ or $\infty$. From here, we also set $x=0$ by space translation. Let $R = (\mathscr{M}v(0))^{-2/n}$. We can rewrite \eqref{D(-1)Aw-est-t=0,x} as
\begin{equation} \label{D(-1)Aw-est-t=x=0}
\left| \int_{\mathbb{R}^n} \frac{v(y)}{|y|^{n-1}} dy \right|
\leq	CE^C \alpha^{\theta'} R^{-\frac{n-2}{2}}.
\end{equation}

We note that for every $r>0$,
\begin{equation} \label{Aw-loc-L1}
\int_{B(0,r)} |v(y)| \,dy \leq C \min(R^{-n/2}r^n, r^{n/2}).
\end{equation}
One upper bound $R^{-n/2}r^n$ comes from the definition of $\mathscr{M}$, and the other $r^{n/2}$ comes from H\"{o}lder's inequality and $\norm{L_x^2}{v} \leq \norm{\dot{S}^0([t_1,t_2])}{Aw} \leq 1$.
For a small absolute constant $0 < c_0 < 1$, with help of \eqref{Aw-loc-L1} and the layer cake decomposition, we can prove \eqref{D(-1)Aw-est-t=x=0} restricted on the region $|y| \leq \alpha^{c_0}R$ or $|y| \geq \alpha^{-c_0}R$. Therefore, after we dyadically decompose the remaining region $\alpha^{c_0}R \leq |y| \leq \alpha^{-c_0}R$ and take $c_0$ possibly smaller, it suffices to prove that
\[
\left| \int_{\mathbb{R}^n} v(y) \phi\Big(\frac{y}{r}\Big) dy \right|
\leq	CE^C \alpha^{\theta'} r^{n/2}
\]
for every $r>0$, where $\phi$ is a fixed real-valued bump function. We assume $r = 1$ as the general case can be obtained by the scaling $u(t,x) \mapsto r^{\frac{n-2}{2}}u(r^2 t, r x)$ and $w(t,x) \mapsto r^{\frac{n}{2}}w(r^2 t, r x)$.
Expanding $v(x)=Aw(0,x)$, our goal is rewritten as
\begin{equation} \label{int-Aw-finalstep}
\left| \int_{t_1}^{0} \int_{\mathbb{R}^n} \big(A_0^1w(t) +A_0^2w(t) +A_0^{2'}w(t)\big) e^{it\Delta} \phi \,dx \,dt \right|
\leq	CE^C \alpha^{\theta'}.
\end{equation}

We divide the left side of \eqref{int-Aw-finalstep} into two time regions, the distant past $[t_1, -\tau]$ and the recent past $[-\tau, 0]$, where $\tau = C\alpha^{-\theta''}$ with $\theta'' > 0$ determined later.
Estimating over the distant past is relatively easy, and even omissible when $t_1 \geq -\tau$.
After some computations, with the exponents $p_\beta$, $q_\beta$, and $\tilde{q}_\beta$ in Subsection \ref{Subsection:2.2} reused, we see that
\begin{align*}
&\quad	\left| \int_{t_1}^{-\tau} \int_{\mathbb{R}^n} A_0^1w(t) e^{it\Delta} \phi \,dx \,dt \right| \\
&\leq	C
\norm{L_x^{\frac{n}{b+1}, \infty}} {|x|^{-(b+1)}}
\norm{L_t^{p_0}([t_1, -\tau]; L_x^{\tilde{q}_0})} {u}^{k}
\norm{L_t^{p_b}([t_1, -\tau]; \dot{W}_x^{-1;\tilde{q}_b,2})} {w}
\norm{L_t^{2}([t_1, -\tau]; L_x^{\frac{2n}{n-2},2})} {e^{it\Delta} \phi} \\
&\leq	C\eta^{\frac{2-b}{n+2}} \norm{\dot{S}^0([t_1,-\tau])}{w} \tau^{-1/2} \norm{\dot{H}_x^1}{\phi} \\
&\leq	C \alpha^{\theta''/2}
\end{align*}
and similarly
\begin{align*}
&\quad	\left| \int_{t_1}^{-\tau} \int_{\mathbb{R}^n} A_0^2w(t) e^{it\Delta} \phi \,dx \,dt \right| \\
&\leq	C
\norm{L_x^{\frac{n}{b}, \infty}} {|x|^{-b}}
\norm{L_t^{p_0}([t_1, -\tau]; L_x^{\tilde{q}_0})} {u}^{k}
\norm{L_t^{p_b}([t_1, -\tau]; L_x^{q_b,2})} {w}
\norm{L_t^{2}([t_1, -\tau]; L_x^{\frac{2n}{n-2},2})} {e^{it\Delta} \phi} \\
&\leq	C\eta^{\frac{2-b}{n+2}} \norm{\dot{S}^0([t_1,-\tau])}{w} \tau^{-1/2} \norm{\dot{H}_x^1}{\phi} \\
&\leq	C \alpha^{\theta''/2}.
\end{align*}
The integral of $A_0^{2'}w(t) e^{it\Delta} \phi$ is estimated similarly to that of $A_0^2w(t) e^{it\Delta} \phi$.

It remains to prove \eqref{int-Aw-finalstep} over the recent past. We modify the $h$-difference approach used in the proof of Lemma \ref{lem:3.8-v-holder-est}.
Let $h \in \mathbb{R}^n$ and write $f^{(h)}(t,x) := f(t, x+h)$ for any measurable function $f(t,x)$.
At a fixed time $-\tau \leq t \leq 0$, we integrate the $h$-difference of $A_0^1w(t) e^{it\Delta}\phi$ but with $w$ left unperturbed.
For any absolute constant $0 < \delta < 1$, we have
\begin{equation} \label{A0.1-perturb}
\begin{aligned}
&\quad	\left| \int_{\mathbb{R}^n}
\Big(\frac{x+h}{|x+h|^{b+2}} |u^{(h)}|^k(t) e^{it\Delta} \phi^{(h)} - \frac{x}{|x|^{b+2}} |u|^k(t) e^{it\Delta} \phi \Big) \frac{\nabla\cdot w}{|\nabla|^2} \,dx \right| \\
&\leq	C
\norm{L_x^{\frac{n}{b+1+\delta}}} {\frac{x+h}{|x+h|^{b+2}} - \frac{x}{|x|^{b+2}}}
\norm{L_x^{\frac{2n}{n-2}}} {u}^{k}
\norm{\dot{W}_x^{-1,\frac{2n}{n-2}}} {w}
\norm{L_x^{\frac{2n}{n-4-2\delta}}} {e^{it\Delta} \phi} \\
&\quad	+C
\norm{L_x^{\frac{n}{b+1},\infty}} {|x|^{-(b+1)}}
\norm{L_x^{\frac{2}{k}}} {|u^{(h)}|^k - |u|^k}
\norm{\dot{W}_x^{-1,\frac{2n}{n-2}}} {w}
\norm{L_x^{\frac{2n(n-2)}{n(n-6)+4b},1}} {e^{it\Delta} \phi} \\
&\quad	+C
\norm{L_x^{\frac{n}{b+1},\infty}} {|x|^{-(b+1)}}
\norm{L_x^{\frac{2n}{n-2}}} {u}^{k}
\norm{\dot{W}_x^{-1,\frac{2n}{n-2}}} {w}
\norm{L_x^{\frac{2n}{n-4},1}} {e^{it\Delta} (\phi^{(h)} - \phi)} \\
&\leq	CE^C \langle t\rangle^{-2} \norm{L_t^{\infty}([-\tau, 0], \dot{W}_x^{-1,\frac{2n}{n-2}})}{w}
(|h|^{\delta} + |h|^{k} + |h|) \\
&\leq	CE^C \langle t\rangle^{-2} |h|^{\min(\delta, k)}
\end{aligned}
\end{equation}
provided that $n-4-2\delta \geq 0$ and $n(n-6)+4b > 0$.

We note a few things to justify \eqref{A0.1-perturb}. First, we can obtain
\[
\norm{L_x^{\frac{n}{b+1+\delta}}} {\frac{x+h}{|x+h|^{b+2}} - \frac{x}{|x|^{b+2}}}
\leq C(n,b,\delta)|h|^\delta < \infty
\]
as a variant of \eqref{holder-perturb-xpownb}. Second, we also observe that
\[
\norm{L_x^{\frac{2}{k}}} {|u^{(h)}|^k - |u|^k}
\leq C \norm{L_x^{2}} {u^{(h)} - u}^k
\leq CE^C |h|^k,
\]
which comes from the pointwise inequality $\big| |u^{(h)}|^k - |u|^k \big| \leq C |u^{(h)} - u|^k$ (since $0 < k \leq 1$ when $n+2b \geq 6$) and by the Fundamental Theorem of Calculus on $u^{(h)} - u$.
Lastly, all the factors involving $e^{it\Delta} \phi$ are estimable by the dispersive estimate and the smoothness of $\phi$.
All three Lorentz space exponents associated with $e^{it\Delta} \phi$,
\[
2n/(n-4), \ 2n/(n-4-2\delta), \text{ and } 2n(n-2)/(n(n-6)+4b),
\] should lie in $[2n/(n-4), \infty)$ to support the time decay $\langle t \rangle^{-2}$. This is possible when $0<\delta \leq 1/2$ and either of $n\geq 6$ with $0<b<2$ or $n=5$ with $5/4<b<2$ holds. ($2n/(n-4-2\delta) = \infty$ is permitted.) For a similar reason, our arguments in this section are discontinued when $n=4$.

We next estimate the integral of the $h$-difference of $A_0^2w(t) e^{it\Delta} \phi$.
Under the same conditions on $n$, $b$, and $\delta$, for every $-\tau \leq t \leq 0$, we have
\begin{equation} \label{A0.2-perturb}
\begin{aligned}
&\quad	\left| \int_{\mathbb{R}^n}
\Big(\frac{1}{|x+h|^{b}} |u^{(h)}|^k(t) e^{it\Delta} \phi^{(h)} - \frac{1}{|x|^{b}} |u|^k(t) e^{it\Delta} \phi \Big) \overline{w} \,dx \right| \\
&\leq	C
\norm{L_x^{\frac{n}{b+\delta}}} {\frac{1}{|x+h|^{b}} - \frac{1}{|x|^{b}}}
\norm{L_x^{\frac{2n}{n-2}}} {u}^{k}
\norm{L_x^2} {w}
\norm{L_x^{\frac{2n}{n-4-2\delta}}} {e^{it\Delta} \phi} \\
&\quad	+C
\norm{L_x^{\frac{n}{b},\infty}} {|x|^{-b}}
\norm{L_x^{\frac{2}{k}}} {|u^{(h)}|^k - |u|^k}
\norm{L_x^2} {w}
\norm{L_x^{\frac{2n(n-2)}{n(n-6)+4b},1}} {e^{it\Delta} \phi} \\
&\quad	+C
\norm{L_x^{\frac{n}{b},\infty}} {|x|^{-b}}
\norm{L_x^{\frac{2n}{n-2}}} {u}^{k}
\norm{L_x^2} {w}
\norm{L_x^{\frac{2n}{n-4},1}} {e^{it\Delta} (\phi^{(h)} - \phi)} \\
&\leq	CE^C \langle t\rangle^{-2} \norm{L_t^{\infty}([-\tau, 0], L_x^2)}{w}
(|h|^{\delta} + |h|^{k} + |h|) \\
&\leq	CE^C \langle t\rangle^{-2} |h|^{\min(\delta, k)}.
\end{aligned}
\end{equation}
The integral is similarly estimable when the integrand involves $A_0^{2'}w$ in place of $A_0^{2}w$.

Adding the estimates \eqref{A0.1-perturb}, \eqref{A0.2-perturb} and its $A_0^{2'}w$ version, we have
\[
\left| \int_{\mathbb{R}^n} A_0(w-w^{(-h)}) e^{it\Delta} \phi \,dx \right|
\leq	CE^C \langle t\rangle^{-2} |h|^{\min(\delta, k)}
\]
for every $-\tau \leq t \leq 0$. Taking the average on $|h|<r$ here, we have
\begin{equation} \label{A0-avg-perturb}
\left| \int_{\mathbb{R}^n} A_0(w-w^{\rm av}) e^{it\Delta} \phi \,dx \right|
\leq	CE^C r^{\min(\delta, k)} \langle t\rangle^{-2}
\end{equation}
where we define $\displaystyle w^{\rm av}(t,x) := \int_{\mathbb{R}^n} \chi(y) w(t, x+ry) \,dy$ and $0<r<1$ is determined later.

Meanwhile, Young's inequality applied to $\displaystyle w^{\rm av}(t,x) = \int_{\mathbb{R}^n} \frac{1}{r^n} \chi\Big(\frac{y'-x}{r}\Big) w(t,y') \,dy'$ yields
\[
\norm{L_x^{\frac{2(n+2)}{n-2}}} {w^{\rm av}}
\leq	Cr^{-1} \norm{\dot{W}_x^{-1,\frac{2(n+2)}{n-2}}} {w}
\]
and hence for every $-\tau \leq t \leq 0$,
\begin{equation} \label{A0-avg}
\begin{aligned}
&\quad	\left| \int_{\mathbb{R}^n} A_0w^{\rm av} e^{it\Delta} \phi \,dx \right| \\
&\leq	C
\norm{L_x^{\frac{n}{b},\infty}} {|x|^{-b}}
\norm{L_x^{\frac{2n}{n-2}}} {u}^{k}
\norm{L_x^{\frac{2(n+2)}{n-2}}} {w^{\rm av}}
\norm{L_x^{\frac{2n(n+2)}{(n+4)(n-2)},1}} {e^{it\Delta} \phi} \\
&\leq	CE^C r^{-1} \langle t\rangle^{-\frac{4}{n-2}} \norm{\dot{W}_x^{-1,\frac{2(n+2)}{n-2}}} {w}.
\end{aligned}
\end{equation}
Therefore, adding the two estimates \eqref{A0-avg-perturb} and \eqref{A0-avg} integrated in time, we have
\[
\left| \int_{-\tau}^{0} \int_{\mathbb{R}^n} A_0w(t) e^{it\Delta} \phi \,dx \,dt \right| \\
\leq	CE^C (r^{\min(\delta, k)} + r^{-1}\tau^{\frac{n+6}{2(n+2)}}\alpha)
\]
where we recall that $\tau = C\alpha^{-\theta''} \geq C$. Taking $\theta''>0$ small enough, we can optimize the last inequality in $r$ to finish the proof of \eqref{int-Aw-finalstep} and hence Lemma \ref{lem:4.1}.

\bibliographystyle{plain}
\bibliography{INLS-scatter-ref}

\end{document}